\tikzstyle{decision} = [diamond, draw, fill=blue!20, 
\tikzstyle{block} = [rectangle, draw, fill=blue!20, 
\tikzstyle{block2} = [rectangle, draw, fill=yellow!20, 
\tikzstyle{line} = [draw, -latex']
\tikzstyle{cloud} = [draw, ellipse,fill=red!20, node distance=3cm,
\newcommand{\Rmnum}[1]{\expandafter\@slowromancap\romannumeral #1@}
\newtheorem{theorem}{Theorem}[section]
\newtheorem{lemma}[theorem]{Lemma}
\newtheorem{corollary}[theorem]{Corollary}
\newtheorem{prop}[theorem]{Proposition}
\theoremstyle{definition}
\theoremstyle{remark}
\newtheorem{remark}[theorem]{Remark}
\numberwithin{equation}{section}
\begin{document}


\tikzstyle{block} = [rectangle, draw, 
text width=5.5em, text centered, rounded corners, minimum height=2em]

\title{Real analyticity of the modified Laplacian coflow}


\author{Chuanhuan Li}
\address{ Shanghai Institute for Mathematics and Interdisciplinary Sciences (SIMIS), Shanghai 200433, China \newline
${\quad}$ Research Institute of Intelligent Complex Systems, Fudan University, Shanghai 200433, China}
\curraddr{}
\email{chli@simis.cn}
\thanks{}

\author{Yi Li}
\address{Center for Mathematics and Interdisciplinary Sciences, Fudan University, Shanghai 200433, China \newline
${\quad}$ Shanghai Institute for Mathematics and Interdisciplinary Sciences (SIMIS), Shanghai 200433, China}
\curraddr{}
\email{yilicms@simis.cn, yilicms@gmail.com}

\subjclass[2020]{Primary 53E99,
53C27
}

\keywords{}

\date{}

\dedicatory{}

\begin{abstract}
   Let $(M,\psi(t))_{t\in[0, T]}$ be a solution of the modified Laplacian coflow $\eqref{modified Laplacian coflow}$ with coclosed $G_{2}$-structures on a compact $7$-dimensional $M$. We improve Chen's Shi-type estimate \cite{Chen Shi-estimates} for this flow, and then show that $(M,\psi(t),g_{\psi}(t))$ is real analytic, where $g_{\psi}(t)$ is the  Riemannian metric associated to $\psi(t)$, which answers a question proposed by Grigorian in \cite{Grigorian-2020}. Consequently, we obtain the unique continuation results for this flow.

   {{\bf Keywords} Modified Laplacian coflow; Shi-type estimate; Real analyticity}
\end{abstract}

\maketitle

\section{Introduction}
Geometric flows play a crucial role in the study of geometric structures. The core idea is to start with a general geometric structure on a manifold and then utilize a flow to obtain a more specialized structure. Let $M$ be a $7$-dimensional manifold. Considering a closed $G_{2}$-structure on $M$, Bryant \cite{Bryant 2006} introduced the following Laplacian  flow:
\begin{equation}
  \left \{
       \begin{array}{rl}
          \partial_{ t}\varphi(t)&=\Delta_{\varphi(t)}\varphi(t),\\
           \varphi(0)&=\varphi,
       \end{array}
  \right.
  \label{The closed Laplacian flow}
\end{equation}
where $\Delta_{\varphi(t)}=dd^{\ast}_{\varphi(t)}+d^{\ast}_{\varphi(t)}d$ is the Hodge Laplacian of $g(t)$ and $\varphi$ is an initial closed $G_{2}$-structure. Here $g(t)$ is the associated Riemannian metric of $\varphi(t)$.

The flow $\eqref{The closed Laplacian flow}$ can be viewed as the gradient flow for the Hitchin functional introduced by Hitchin \cite{Hitchin 2000}
$$\mathcal{H}:[\overline{\varphi}]_{+}\longrightarrow\mathbb{R}^{+},\ \varphi\longmapsto\frac{1}{7}\int_{M}\varphi\wedge\psi=\int_{M}\ast_{\varphi}^{H}1.$$
Here $\overline{\varphi}$ is a closed $G_{2}$-structure on $M$, $[\overline{\varphi}]_{+}$ is the open subset of the cohomology class $[\overline{\varphi}]$ consisting of $G_{2}$-structures and $\psi$ is the {$4$-form associated to} $\varphi$. Any critical point of $\mathcal{H}$ gives rise to a torsion-free $G_{2}$-structure. In \cite{Bryant-Xu 2011}, {when the initial $3$-form $\varphi$
is closed and we evolve inside a fixed cohomology class,} Bryant and Xu obtained the short-time existence and uniqueness {to the solutions of {the} Laplacian flow}. Several important foundational analytic results were established by Lotay and Wei in \cite{Lotay-Wei Shi-estimate, L-W 2019, L-W 2019 2}, {including} derivative estimate, characterization of blow-up time, dynamical stability, and real analyticity. For the blow-up time with bounded Ricci curvature, the second author gave another proof in \cite{local curvature of G2}. For the blow-up time with bounded scalar curvature or bounded torsion tensor, see \cite{Fine-Yao 2018, LL G2 flow, Picard-Suan}.

In fact, a $G_{2}$-structure and the corresponding metric can also be defined using the 4-form $\psi=\ast_{\varphi}^{H}\varphi$ {and a given orientation}. Here $\ast^{H}$ is the Hodge star operator. Thus, Karigiannis, Mckay, and Tsui \cite{KMT2012} introduced the following  Laplacian coflow (with the opposite sign):
\begin{equation}
  \left \{
       \begin{array}{rl}
          \partial_{ t}\psi(t)&=\Delta_{\psi(t)}\psi(t),\\
           \psi(0)&=\psi,
       \end{array}
  \right.
  \label{The coflow}
\end{equation}
where $\Delta_{\psi(t)}$ is the Hodge Laplacian induced by $\psi(t)$ and $\psi$ is an initial coclosed $G_{2}$-structure. However, since the Laplacian coflow $\eqref{The coflow}$ is not weakly parabolic and the principal symbol of $\Delta_{\psi}\psi$ is indefinite, it cannot be modified to be parabolic in the same way as the Ricci flow. Therefore, Grigorian \cite{Grigorian-2013} introduced the modified Laplacian coflow to address this issue:
\begin{equation}
  \left \{
       \begin{array}{rl}
          \partial_{ t}\psi(t)&=\Delta_{\psi(t)}\psi(t)+2d\left[\left(A-{\rm tr} \big{(}\mathbf{T}(t)\big{)}\right)\varphi(t)\right],\\
           \psi(0)&=\psi,
       \end{array}
  \right.
  \label{modified Laplacian coflow}
\end{equation}
where ${\rm tr}\big{(}\mathbf{T}(t)\big{)}$ is the trace of the torsion $\mathbf{T}(t)$, $A$ is a nonnegative constant, and $\psi$ is an initial coclosed $G_{2}$-structure. {If restricted to coclosed $G_{2}$ structure($d\psi(t)=0$), then the modified Laplacian coflow $\eqref{modified Laplacian coflow}$ preserves the cohomology class of $\psi$. Under this condition,} Grigorian proved the short-time existence and uniqueness of the modified Laplacian coflow in \cite{Grigorian-2013}. In 2018, Chen provided the Shi-type estimate and studied finite-time singularities of this flow in \cite{Chen Shi-estimates}. Bedulli and Vezzoni showed the stability of the modified Laplacian coflow $\eqref{modified Laplacian coflow}$ in \cite{BV2020}. Recently, S\'a Earp, Saavedra, and Suan explored solutions to the modified Laplacian coflow $\eqref{modified Laplacian coflow}$ on a contact Calabi–Yau $7$-manifold in \cite{SSS2024}. For more details about the Laplacian coflow $\eqref{The coflow}$, the modified Laplacian coflow $\eqref{modified Laplacian coflow}$ and other flows related to $G_{2}$-structures, see \cite{Fer-fin 2016, Fino 2021, Grigorian-2016, Huang-wang-yao 2018,LLX25, Lotay 2022, Weiss-Witt 2012, Weiss-Witt 2012b}.

In \cite{Grigorian-2020}, Grigorian proposed the following question:
\begin{quote}
{\bf Question:} {\textit{Are the solutions of the modified Laplacian coflow real analytic, similarly to to Laplacian flow $\eqref{The closed Laplacian flow}$? } }
\end{quote}

In this paper, we study the real analyticity of the modified Laplacian coflow $\eqref{modified Laplacian coflow}$ with coclosed $G_{2}$-structures. Motivated by \cite{Bando 1987, L-W 2019 2, Kotschwar 2013}, we give the following estimate:
\begin{theorem}[{\bf Improvement of the Shi-type estimate}]
     Let $\psi(t)_{t\in[0,T]}$ be a smooth solution of the modified Laplacian coflow $\eqref{modified Laplacian coflow}$ on a geodesic ball $B_{g_{0}}(p,r)$, where $p\in M$, $r>0$, $M$ is the $7$-dimensional smooth manifold, and $g_{0}$ is the metric induced by initial coclosed $G_{2}$-structure $\psi$. Suppose that $A^{2}\leq \mathbf{M}$. If 
     $$\Lambda(x,t):=\left(|{\rm Rm}|^{2}(x,t)+|\nabla\mathbf{T}|^{2}(x,t)+|\mathbf{T}|^{4}(x,t)\right)^{\frac{1}{2}}\leq \mathbf{M}$$
     on $B_{g_{0}}(p,r)\times[0,T]$, then 
      there exist positive constants $L,C$ depending only on $r,\mathbf{M}$ and $T$, such that
     \begin{align}
         t^{\frac{k}{2}}\left(|\nabla^{k}{\rm Rm}|(x,t)+|\nabla^{k+1}\mathbf{T}|(x,t)\right)\leq CL^{\frac{k}{2}}(k+1)!
     \end{align}
     for all $k\in\mathbb{N}$ and $(x,t)\in B_{g_{0}}(p,r/2)\times[0,T]$.
\end{theorem}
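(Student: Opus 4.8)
The plan is to follow the now-standard Bando--Bennett--Kotschwar--Lotay--Wei strategy for proving real analyticity of a geometric flow: prove, by induction on $k$, \emph{factorially-growing} interior estimates for all covariant derivatives of the curvature and torsion, and then observe that such estimates force the time-slice $(M,g_\psi(t))$ and the tensor $\psi(t)$ to be real analytic. The first step is to assemble the evolution equations for the flow \eqref{modified Laplacian coflow}. Writing schematically, one knows from Chen's work \cite{Chen Shi-estimates} that along this flow the metric, the torsion $\mathbf T$ and the curvature $\mathrm{Rm}$ satisfy heat-type equations of the form
\begin{align*}
  \partial_t \mathrm{Rm} &= \Delta \mathrm{Rm} + \mathrm{Rm}\ast\mathrm{Rm} + \nabla^2(\mathbf T \ast \mathbf T) + (\text{lower order in } \mathbf T) + A\ast \nabla^2\mathbf T,\\
  \partial_t \mathbf T &= \Delta \mathbf T + \mathrm{Rm}\ast\mathbf T + \mathbf T\ast\mathbf T\ast\mathbf T + \nabla(\mathbf T\ast\mathbf T) + A\ast\nabla\mathbf T,
\end{align*}
where $\ast$ denotes a metric contraction with universal coefficients and the connection $\nabla$ is the Levi-Civita connection of $g_\psi(t)$; the constant $A$ is controlled since $A^2\le\mathbf M$. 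Differentiating $k$ times and using the commutation formula $[\nabla,\Delta]$, which itself brings in $\nabla^{\le k}\mathrm{Rm}$, one gets for $Q_k := \nabla^k\mathrm{Rm}$ and $R_k := \nabla^{k+1}\mathbf T$ evolution inequalities of the form $\partial_t |Q_k| \le \Delta|Q_k| + \sum_{i+j=k} |\nabla^i\mathrm{Rm}||\nabla^j\mathrm{Rm}| + \cdots$ with all terms being convolutions of lower-order quantities of total order $k$ plus the leading $\nabla^2$ of the torsion terms.

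Next I would set up the induction. The base case $k=0$ is exactly the hypothesis $\Lambda\le\mathbf M$, and the case $k=1$ (giving $t^{1/2}(|\nabla\mathrm{Rm}|+|\nabla^2\mathbf T|)\le CL^{1/2}\cdot 2$) follows from Chen's Shi-type estimate \cite{Chen Shi-estimates}, localized to $B_{g_0}(p,r)$ via a cutoff; I may need to absorb an extra clean step to get the Shi estimate in precisely this localized, $T$-dependent form. For the inductive step, assume the estimate holds for all indices $\le k$. Define the combined quantity
$$ G_k = t^{k}\big(|\nabla^k\mathrm{Rm}|^2 + |\nabla^{k+1}\mathbf T|^2\big) + \beta\, t^{k-1}\big(|\nabla^{k-1}\mathrm{Rm}|^2 + |\nabla^{k}\mathbf T|^2\big) $$
for a large constant $\beta$ (the familiar trick: the lower-order term dominates the dangerous $\nabla$-of-leading term produced by Kato's inequality and by the $\nabla^2(\mathbf T\ast\mathbf T)$ coupling). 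Using the induction hypothesis to bound every strictly-lower-order factor appearing in $\partial_t G_k$ by quantities of the form $C L^{j/2}(j+1)!\, t^{-j/2}$, one finds that the ``bad'' inhomogeneous terms in $\partial_t G_k \le \Delta G_k + (\text{bad})$ are themselves bounded, after collecting powers of $t$, by an expression like $C^2 L^k ((k+1)!)^2 \cdot (\text{combinatorial sum})$, and the combinatorial sum $\sum_{i+j=k}\binom{?}{?}(i+1)!(j+1)!/(k+1)!$ is bounded by a universal constant times $(k+1)$ — this is the arithmetic lemma that makes the factorial ansatz close. Then one applies the localized maximum principle (De Giorgi--Nash--Moser / Bernstein-type argument with the $g_0$-cutoff, exactly as in \cite{L-W 2019 2, Kotschwar 2013}) to $G_k \phi$ with $\phi$ a fixed spatial cutoff supported in $B_{g_0}(p,r)$ and equal to $1$ on $B_{g_0}(p,r/2)$; because the metrics $g_\psi(t)$ are uniformly equivalent on $[0,T]$ (a consequence of the bound on $\mathbf T$), the cutoff behaves uniformly. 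Choosing $L$ large relative to the universal constants absorbs the $(k+1)$ factor into the passage from $((k+1)!)^2$ to $((k+2)!)^2$ wait—more precisely one checks that the ansatz $t^{k/2}(|\nabla^k\mathrm{Rm}|+|\nabla^{k+1}\mathbf T|)\le CL^{k/2}(k+1)!$ is reproduced at level $k$ with the \emph{same} $C$ provided $L$ was chosen large enough at the outset, completing the induction.

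Finally, the real-analyticity conclusion (which is the point of the theorem for answering Grigorian's question) is obtained by the standard argument: fixing $t_0\in(0,T]$, the factorial bounds on $\nabla^k\mathrm{Rm}$ give, in $g_\psi(t_0)$-geodesic normal coordinates, bounds $|\partial^\alpha g_{ij}| \le C\, R^{-|\alpha|}\,|\alpha|!$ on a fixed coordinate ball, which is precisely Morrey's criterion for $g_\psi(t_0)$ to be real analytic; similarly the bounds on $\nabla^{k+1}\mathbf T$, together with the algebraic formula expressing $\psi(t)$ through $g_\psi(t)$ and the torsion (or directly through the metric, since $\psi$ determines and is determined by $g_\psi$ together with an orientation), give real analyticity of $\psi(t_0)$; and unique continuation then follows from \cite{Kotschwar 2013}-type arguments, or from the general principle that solutions of a quasilinear parabolic system that are real-analytic in space and agree to infinite order at a point must coincide. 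The main obstacle I anticipate is \emph{not} the structure of the argument but the bookkeeping in the inductive step: one must carefully track the extra two derivatives of torsion that appear (the flow is second order in $\mathbf T$ through the $d[(A-\mathrm{tr}\,\mathbf T)\varphi]$ term, so $\nabla^{k+1}\mathbf T$ — not $\nabla^k\mathbf T$ — is the natural ``same order as $\nabla^k\mathrm{Rm}$'' quantity), verify that the commutator terms $[\nabla^k,\Delta]$ and the reaction terms only ever involve factors of combined order $\le k$, and confirm that the resulting combinatorial sum of products of factorials is $O(k)$ rather than, say, $O(k!)$; getting the shift in the index of $\mathbf T$ consistent across the evolution equation, the commutators, and the cutoff maximum principle is where an error would most easily creep in.
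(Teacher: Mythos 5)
Your proposal correctly identifies the standard Bando--Kotschwar--Lotay--Wei framework and the right evolution equations, but the concrete mechanism you describe is closer to Shi's classical two-level construction than to what is actually needed here, and there are two genuine gaps.

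\textbf{(1) Two-level $G_k$ vs.\ the summed, factorially-normalized quantity.}
You propose a $k$-by-$k$ induction using $G_k = t^{k}H_k + \beta\,t^{k-1}H_{k-1}$ with $H_j = |\nabla^j\mathrm{Rm}|^2 + |\nabla^{j+1}\mathbf T|^2$ and a large constant $\beta$. Differentiating $t^k$ produces the term $k\,t^{k-1}H_k$, and the only ``good'' term that can absorb it comes from $\partial_t H_{k-1}$, namely $-2\beta\,t^{k-1}H_k$; that forces $\beta\gtrsim k$, so $\beta$ cannot be a fixed constant. This is exactly the $k$-dependence problem that the paper's Remark 1.2 identifies in Chen's Shi-type estimate and explicitly sets out to remedy. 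The paper's actual device is to \emph{normalize by factorials from the outset}, defining $a_k = t^{k/2}|\nabla^k\mathrm{Rm}|/(k+1)!$ (and $b_k, c_k, d_k$ likewise), so that the dangerous coefficient becomes $k/(k+1)^2\le 1/4$, uniformly in $k$; and then to work not with a pair $(k,k-1)$ but with the cumulative sum $\Phi_N=\sum_{k=0}^N\big(a_k^2+b_k^2+c_k^2+d_k^2\big)+|\mathbf T|^2+A^2+|\varphi|^2+|\psi|^2$. One derives a single differential inequality of the form
\begin{equation*}
\left(\tfrac{\partial}{\partial t}-\triangle\right)\Phi_N \le -\Psi_N + C\Psi_N\big[(1+t\Phi_N)^5-1\big] + C\Phi_N^5(1+t\Phi_N)^4,
\end{equation*}
and an ODE comparison shows $\Phi_N\le C_\ast$ on a time interval $[0,T_0]$ with $T_0$ \emph{independent of $N$}. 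This $N$-uniformity, obtained in one stroke for all derivative orders at once, is what you cannot reproduce from a $k$-by-$k$ induction without carefully controlling how the maximum-principle constant and the survival time depend on $k$; your proposal does not address this.

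\textbf{(2) The cutoff must carry a $k$-dependent power.}
You suggest applying the maximum principle to $G_k\phi$ with a fixed spatial cutoff $\phi$. With a fixed cutoff, the cross term $\langle\nabla\phi,\nabla(t^kH_k)\rangle$ cannot be absorbed uniformly in $k$ on a fixed ball: classically one then shrinks the ball at each step, which loses the uniform domain $B_{g_0}(p,r/2)$. The paper instead uses the Kotschwar device of weighting the $k$th quantity by $\eta^{k+1}$, defining $\alpha_k=\eta^{(k+1)/2}a_k/L^{k/2}$, etc.; the bad cross term is then bounded via $|\nabla\eta|^2-\eta\triangle\eta\le C_0\eta$ (Lemma 5.3 of the paper) and absorbed into $\tfrac{1}{8}\tilde\alpha_{k+1}^2 + \tfrac{128C_0t}{L}\tilde\alpha_k^2$, and choosing $L$ large absorbs the remainder. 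A fixed $\phi$ will not give you (5.10) on a fixed ball for all $k$.

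\textbf{(3) Minor scope issue.}
You devote a paragraph to deducing real analyticity and unique continuation; these are the paper's Theorems 1.2 and Corollary 1.3, not the statement at hand. The statement is only the derivative estimate (5.10). Also note that the paper tracks $\nabla^{k+2}\varphi$ and $\nabla^{k+2}\psi$ as separate quantities $c_k,d_k$ (since the flow equations in Section 3 produce such terms directly); these can in principle be expressed via $\nabla^{\le k+1}\mathbf T$, but the bookkeeping is cleaner if they are kept explicit, and your ansatz with only $\nabla^k\mathrm{Rm}$ and $\nabla^{k+1}\mathbf T$ would need to verify this reduction.

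Your outline captures the right spirit, but as written it would reproduce the classical Shi estimate with unexplicit $k$-dependence rather than the factorial bound claimed; the factorial normalization built into $a_k$, the summation over all orders into $\Phi_N$, and the $\eta^{k+1}$ cutoff weighting are the three ingredients that make the argument close, and all three are missing from your proposal.
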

\begin{remark}
In Theorem 2.1 of \cite{Chen Shi-estimates}, it is unclear how the upper bound $C(k)$ of the Shi-type estimate depends on the order $k$ of differentiation. Therefore, our estimate in Theorem 1.1 can be viewed as an improvement of the Shi-type estimate for the modified Laplacian coflow $\eqref{modified Laplacian coflow}$.
\end{remark}

Given a $7$-manifold $M$, a real analytic structure on $M$ is an atlas
$$\left\{(U_{j},\{x_{j}^{i}\}_{i={1}}^{7})\right\}_{j\in\mathcal{J}},$$
where $\mathcal{J}$ is some indexing set, such that the transition functions are real analytic. A Riemannian metric $g$ on a real analytic manifold $M$ is then \textit{real analytic} if the components $g_{ij}$ of $g$ are real analytic functions with respect to a subatlas of real analytic coordinates. If in addition the components $\psi_{ijkl}$ of $\psi$ are real analytic with respect to the normal coordinates, then we say that $\psi$ is also \textit{real analytic}.

Using Theorem 1.1, we provide real analyticity of the modified Laplacian coflow $\eqref{modified Laplacian coflow}$ with coclosed $G_{2}$-structures and answer the question proposed by Grigorian in \cite{Grigorian-2020}. 
\begin{theorem}[{\bf {Answer to} the question proposed by Grigorian}]\label{1.3}
    Let $\psi(t)_{t\in[0,T]}$ be a smooth solution of the modified Laplacian coflow $\eqref{modified Laplacian coflow}$ with coclosed $G_{2}$-structure on an open set $U\subset M$. For each time $t\in(0,T]$, $(U,\psi(t),g_{\psi}(t))$ is real analytic.
\end{theorem}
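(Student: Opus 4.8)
The strategy is the classical one of Kotschwar and of Lotay--Wei: upgrade the spatial derivative estimates of Theorem 1.1 into estimates on \emph{mixed space-time derivatives} of the geometric data, and then read off a Cauchy estimate of the form $|\partial_t^j \nabla^k (\text{data})| \le C M^{j+k} (j+k)!$ (possibly with an extra factor like $(2j)!$ absorbed), which is exactly the characterization of real analyticity of a function of $(x,t)$. The plan is as follows.

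\textbf{Step 1: local setup.} Fix $t_0 \in (0,T]$ and a point $p \in U$. Choose $r>0$ small enough that the $g_{\psi}(t_0)$-geodesic ball $B(p,r)$ is compactly contained in $U$ and so that on $B(p,r)\times[t_0/2, t_0]$ all of $|{\rm Rm}|$, $|\nabla \mathbf{T}|$, $|\mathbf{T}|^2$ and $A^2$ are bounded by a single constant $\mathbf{M}$ (possible by smoothness of the solution on the compact set). Theorem 1.1 then gives $|\nabla^k {\rm Rm}| + |\nabla^{k+1}\mathbf{T}| \le C L^{k/2}(k+1)!\, t^{-k/2}$ on a slightly smaller ball, uniformly in $t \in [t_0/2,t_0]$; since $t$ is bounded below by $t_0/2$ here, this is a genuine bound $|\nabla^k {\rm Rm}| + |\nabla^{k+1}\mathbf{T}| \le C' (L')^{k}(k+1)!$ with $C',L'$ depending on $t_0,\mathbf{M},r$.

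\textbf{Step 2: convert space derivatives into time derivatives.} From the modified Laplacian coflow $\eqref{modified Laplacian coflow}$ one derives the induced evolution equations for $g_{\psi}(t)$, for $\psi(t)$ (equivalently $\varphi(t)$), for ${\rm Rm}$ and for $\mathbf{T}$. Schematically each has the form $\partial_t(\cdot) = \Delta(\cdot) + \mathcal{P}(\cdot)$, where $\mathcal P$ is a universal polynomial contraction (built with $g$ and its inverse) in ${\rm Rm}$, $\mathbf{T}$, $\nabla \mathbf{T}$, $\nabla^2\mathbf{T}$ and lower-order terms, with bounded coefficients (the trace term $A - {\rm tr}\,\mathbf T$ contributes only such lower-order pieces after applying $d$). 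Differentiating these equations $j$ times in $t$ and $k$ times in space, one obtains by induction on $j$ that every $\partial_t^j \nabla^k$ of the data is a universal polynomial expression in spatial covariant derivatives $\nabla^m(\text{data})$ of order $m \le 2j+k$, with combinatorially controlled coefficients. Feeding in the Step 1 bounds $|\nabla^m(\text{data})| \le C'(L')^m (m+1)!$ and carefully counting the number of terms and the multinomial coefficients produced by the Leibniz rule yields
$$
|\partial_t^j \nabla^k g_{\psi}|(x,t_0),\ |\partial_t^j \nabla^k \psi|(x,t_0) \ \le\ \tilde C\, \tilde L^{\,2j+k}\,(2j+k)!
$$
on $B(p,r/4)$, for all $j,k \ge 0$.

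\textbf{Step 3: analyticity from Cauchy estimates.} Work in $g_{\psi}(t_0)$-normal coordinates centered at $p$; in these coordinates ordinary partial derivatives $\partial_x$ and covariant derivatives $\nabla$ differ by finitely many Christoffel terms whose own derivatives are again controlled by the $\nabla^m {\rm Rm}$ bounds, so the estimate of Step 2 passes to coordinate derivatives $|\partial_t^j \partial_x^\alpha g_{ij}| \le \hat C \hat L^{2j+|\alpha|}(2j+|\alpha|)!$. This two-variable Cauchy-type bound is precisely the condition (see e.g. Krantz--Parks, or the argument in \cite{Kotschwar 2013, L-W 2019 2}) implying that $(x,t)\mapsto g_{ij}(x,t)$ and $(x,t)\mapsto \psi_{ijkl}(x,t)$ are jointly real analytic near $(p,t_0)$; in particular they are real analytic in $x$ for the fixed slice $t=t_0$. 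Since $p$ and $t_0$ were arbitrary, $(U,\psi(t_0),g_{\psi}(t_0))$ is real analytic.

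\textbf{Main obstacle.} The crux is Step 2: making the induction bookkeeping precise so that the combinatorial factors generated by (i) commuting $\nabla$ past $\Delta$ and past $\partial_t$ (which introduces curvature terms), (ii) the Leibniz rule applied to the nonlinear contractions $\mathcal P$, and (iii) differentiating the inverse metric $g^{-1}$ repeatedly, all assemble into the clean bound $\tilde C \tilde L^{2j+k}(2j+k)!$ rather than something like $((2j+k)!)^2$. The weighting that makes this work is the heuristic ``each $\partial_t$ costs two $\nabla$'s,'' which matches the parabolic scaling already visible in the $t^{k/2}$ weights of Theorem 1.1; the analyticity in $t$ is genuinely two-sided only for $t$ in the open interval, which is why the theorem is stated for $t \in (0,T]$ and why Step 1 needs the strictly positive lower time bound $t_0/2$.
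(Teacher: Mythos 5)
Your overall strategy shares its backbone with the paper's: apply the improved Shi-type estimate (your Theorem~1.1, the paper's Theorem~5.4) to get spatial Cauchy estimates on $\nabla^{k}{\rm Rm}$ and $\nabla^{k+2}\psi$, pass to geodesic normal coordinates to convert these into Cauchy estimates on the components $g_{ij}$ and $\psi_{ijkl}$ (the paper's Lemma~5.1, which rests on the Kotschwar/Lotay--Wei normal-coordinate argument), and conclude real analyticity from the resulting factorial bounds. Your Step~1 and the $j=0$ case of your Step~3 are exactly this.

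The genuine gap is your Step~2, and you identify it yourself as ``the crux'': converting the spatial bounds into mixed space--time bounds $|\partial_t^j\nabla^k(\cdot)|\le \tilde C\,\tilde L^{2j+k}(2j+k)!$ would require a fully explicit induction on $j$ that tracks the combinatorial coefficients from (i) commuting $\nabla^k$ past $\partial_t$ and $\triangle$, (ii) Leibniz expansions of the nonlinear contractions appearing in $h_{ij}$ and $X$, and (iii) repeated differentiation of $g^{-1}$. You outline the parabolic-scaling heuristic but do not carry out the bookkeeping, so as written the proposal does not establish the bound it invokes. However, this detour is unnecessary for the theorem as stated: it asserts only that $(U,\psi(t),g_{\psi}(t))$ is real analytic \emph{in the spatial variable at each fixed} $t\in(0,T]$, not jointly in $(x,t)$. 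The paper therefore never takes this step. After fixing $t$, one applies Theorem~5.4 (the local Shi-type estimate via a cut-off function), uses $(k+1)!\le 2^{k}k!$ and the lower bound on $t$ to rewrite the estimate in the form $|\nabla^{k}{\rm Rm}|+|\nabla^{k+2}\psi|\le C\,k!\,r^{-k-2}$, invokes Lemma~5.1 to produce the coordinate Cauchy estimates $|\partial^{k}g_{ij}|\le C_3k!r_1^{-k}$, $|\partial^{k}\psi_{ijmn}|\le C_4k!r_2^{-k}$, and concludes via the standard argument in \cite{Ricci flow TA 2}. If you intend to prove joint analyticity (\`a~la Bando), you must actually carry out your Step~2; if you only want the stated spatial result, you should drop Step~2 entirely and promote the $j=0$ case of Step~3 to the main argument, which then coincides with the paper's proof.
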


{Solitons are defined as self-similar solutions to the geometric flow. By Theorem \ref{1.3}, the solitons of the modified Laplacian coflow are consequently real analytic, a property that greatly facilitates their analysis. Further details on the study of these solitons can be found in \cite{Grigorian-2016}.}

We say a $G_{2}$-structure $\psi$ on $M$ is \textit{complete} if the associated metric $g_{\psi}$ is complete. Following Theorem 1.2 and Corollary 6.4 in \cite{KN1963}, we obtain the unique-continuation results for complete solutions.
\begin{corollary}[{\bf Unique-continuation results}]
    Let $M$ be a connected and simply connected $7$-manifold, and $\psi(t)$, $\tilde{\psi}(t)$ be smooth complete solutions to the modified Laplacian coflow $\eqref{modified Laplacian coflow}$ on $M\times[0,T]$. Then for any $t\in (0,T]$, the followings hold.
    \begin{itemize}
        \item[(i)] If $\psi(t)=\tilde{\psi}(t)$ on some connected open set $U\subset M$, then there exists a diffeomorphism $F$ of $M$ such that $F^{\ast}\tilde{\psi}(t)\equiv\psi(t)$.
        \item[(ii)] Any local diffeomorphism $F:U\rightarrow V$ between connected open sets $U,V\subset M$ satisfying $F^{\ast}(\psi(t)|_{V})=\psi(t)|_{U}$ can be uniquely extended to a global diffeomorphism $F$ of $M$ with $F^{\ast}(\psi(t))=\psi(t)$.
    \end{itemize}
\end{corollary}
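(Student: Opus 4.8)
The plan is to derive Corollary 1.4 from the real analyticity statement in Theorem 1.3 together with the classical unique continuation / rigidity machinery for real analytic geometric structures, exactly as cited from \cite{KN1963}. The key point is that once we know, for each fixed $t\in(0,T]$, that $(M,\psi(t),g_{\psi}(t))$ is real analytic (which Theorem 1.3 gives locally on any open set, hence globally on $M$), the $G_2$-structure $\psi(t)$ becomes a real analytic tensor field on a real analytic Riemannian manifold. For such structures there is a unique continuation principle: a real analytic tensor field that agrees with another on a nonempty open set agrees on the whole connected manifold, and more importantly an isometry-type map (here a diffeomorphism preserving the $G_2$-structure) defined on an open set extends.

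\medskip

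First I would fix $t\in(0,T]$ and invoke Theorem 1.3 to equip $M$ with a real analytic structure in which both $g_{\psi}(t)$ and $\psi(t)$ are real analytic; since $\psi(t)$ determines $g_{\psi}(t)$, it suffices to track $\psi(t)$. Because a $G_2$-structure is a (pointwise $GL(7,\mathbb{R})$-homogeneous) reduction of the frame bundle, a diffeomorphism $F$ with $F^{\ast}\psi(t)=\psi(t)$ is in particular an isometry of $g_{\psi}(t)$; conversely the relevant rigidity is that of $G_2$-structure-preserving maps. For part (i): by Theorem 1.3 applied to $\psi(t)$ and $\tilde\psi(t)$, both are real analytic $G_2$-structures, and they agree on the connected open set $U$; the identity map $U\to U$ is then a local $G_2$-structure isomorphism between two real analytic $G_2$-structures on the connected, simply connected, complete manifold $M$. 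The extension theorem for real analytic geometric structures (Corollary 6.4 in \cite{KN1963}, in the form for reductions of the frame bundle / $G$-structures) then produces a global diffeomorphism $F$ of $M$ with $F^{\ast}\tilde\psi(t)\equiv\psi(t)$; completeness and simple connectedness are exactly the hypotheses needed so that analytic continuation of the local isomorphism along paths is path-independent and globally defined. For part (ii): given a local diffeomorphism $F:U\to V$ with $F^{\ast}(\psi(t)|_V)=\psi(t)|_U$, this is a local isomorphism of the real analytic $G_2$-structure $\psi(t)$ with itself; by the same extension theorem, using completeness and simple connectedness of $M$, $F$ continues uniquely (the continuation being forced by real analyticity: two analytic extensions agreeing on an open set must coincide) to a global diffeomorphism of $M$ preserving $\psi(t)$.

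\medskip

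The main obstacle — and the place where care is needed — is verifying that the $G_2$-structure case really falls under the hypotheses of the cited rigidity theorem. The result in \cite{KN1963} is stated for affine/Riemannian connections and isometries; to apply it to $\psi(t)$ one should either (a) pass to the canonical metric connection and observe that a $G_2$-structure is a parallel section of an associated bundle (not parallel for the Levi-Civita connection unless torsion-free, but parallel for the canonical $G_2$-connection, which is itself real analytic), so that real analytic unique continuation of parallel sections along an analytic connection applies, or (b) use that the data $(g_{\psi}(t),\psi(t))$ together form a real analytic rigid geometric structure in the sense of Gromov, for which local Killing / isometry extension on simply connected manifolds is standard. Either route requires checking that the auxiliary connection and the tensors involved are genuinely real analytic in the atlas provided by Theorem 1.3 — this is immediate since they are built algebraically and by differentiation from $\psi(t)$, and differentiation and algebraic operations preserve real analyticity. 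Once this identification is made, completeness guarantees geodesics (or integral curves used in the continuation) are defined for all parameter values, simple connectedness guarantees path-independence of the continuation, and the conclusion follows verbatim from Theorem 1.2 and Corollary 6.4 of \cite{KN1963}.
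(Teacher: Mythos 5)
Your proposal matches the paper's intent: the paper offers no written proof of Corollary~1.4 beyond citing Theorem~1.2 and Corollary~6.4 of \cite{KN1963}, and you correctly identify that the real analyticity supplied by Theorem~1.3 is what unlocks these classical extension results. Your overall architecture — fix $t$, make $(M,\psi(t),g_{\psi}(t))$ real analytic, observe that a $\psi(t)$-preserving local diffeomorphism is in particular a local isometry of $g_{\psi}(t)$, then extend — is the right one, and the invocation of completeness and simple connectedness for path-independent analytic continuation is exactly what the cited results require.

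Where you overcomplicate is the ``main obstacle'' paragraph. You correctly notice that Corollary~6.4 of \cite{KN1963} is stated for isometries of real analytic Riemannian manifolds, not for general $G$-structure isomorphisms, but the two workarounds you float (a canonical $G_2$-connection, or Gromov's rigid geometric structure machinery) are not needed. The cleaner closing step is in two stages. First, apply the Kobayashi--Nomizu extension theorem purely at the level of the real analytic metric: the local isometry (the identity on $U$ in case~(i), the given $F$ in case~(ii)) extends to a global isometry $\widetilde{F}$ of $(M,g_{\psi}(t))$, respectively between $(M,g_{\psi}(t))$ and $(M,g_{\tilde{\psi}}(t))$. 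Such an isometry between real analytic Riemannian manifolds is automatically real analytic, so $\widetilde{F}^{*}\tilde{\psi}(t)$ (or $\widetilde{F}^{*}\psi(t)$) is a real analytic $4$-form. Second, observe that $\widetilde{F}^{*}\tilde{\psi}(t)-\psi(t)$ is a real analytic tensor that vanishes on the nonempty open set $U$; since $M$ is connected, it vanishes identically. This is the elementary unique continuation principle for real analytic tensor fields, and it immediately upgrades the metric isometry to a $G_2$-structure isomorphism without any auxiliary connection or $G$-structure version of the Kobayashi--Nomizu theorem. Uniqueness of the extension follows from the same principle applied to two candidate extensions. With this simplification your argument is complete and is, as far as one can tell, exactly what the paper's citation is meant to encode.
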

\begin{remark}
    Since $\psi$ induces a unique metric $g_{\psi}$, any diffeomorphism $F:(M,\psi)\rightarrow(M,\tilde{\psi})$ such that $F^{\ast}\tilde{\psi}(t)=\psi(t)$ is an isometry between $(M,g_{\psi})$ and $(M,g_{\tilde{\psi}})$. However, $\psi$ and $g_{\psi}$ are not one-to-one, so the converse may {not be} true.
\end{remark}

We provide an outline of this paper. We review the basic theory in Section \ref{section2} about $G_{2}$-structure, $G_{2}$-decompositions of $2$-forms and $3$-forms, the torsion tensors of $G_{2}$-structures, and some definitions and conventions of metric. In Section 3, we derive some evolution equations under the modified Laplacian coflow $\eqref{modified Laplacian coflow}$. In Section 4, using derivative estimates for the Riemann curvature tensor ${\rm Rm}$, the torsion tensor $\mathbf{T}$, $\varphi$, and $\psi$ under modified Laplacian coflow $\eqref{modified Laplacian coflow}$, we give an improvement of Shi-type estimate. Finally, in Section 5, we utilize this estimate to establish the real analyticity of the modified Laplacian coflow $\eqref{modified Laplacian coflow}$ with coclosed $G_{2}$-structures.

{To orient the reader, we provide a roadmap as follows figuture \ref{roadmap}, which is the structure of the whole paper.}

\begin{figure}[!ht]\label{roadmap}
  \centering
  \hspace*{1pt}
  \begin{tikzpicture}[node distance = 3cm, auto]
    \node [block] (Sec2) {Section \ref{section2}};
    \node [block, right of = Sec2] (Sec3) {Section \ref{section3}};
    
    \node [right of = Sec3 ] (Sec) {};
    \node [block, below of = Sec ,node distance = 1.5cm] (Sec5) {Section 
    \ref{subsection5.1}};
    \node [block, below of = Sec ,node distance = -1.5cm] (Sec4) {Section \ref{section4}};
    \node [block, right of = Sec ] (Sec6) {Section \ref{subsection5.2}};

    \path [line] (Sec2) -- (Sec3);
    \path [line] (Sec2) -- (Sec4);
    \path [line] (Sec2) -- (Sec5);
    \path [line] (Sec3) -- (Sec4);
    \path [line] (Sec4) -- (Sec5);
    \path [line] (Sec4) -- (Sec6);
    \path [line] (Sec5) -- (Sec6);
  \end{tikzpicture}
 \caption{Roadmap}

\end{figure}

\section{$G_{2}$-structure}\label{section2}
In this section, we review some basis theory of $G_{2}$-structures in \cite{Bryant 2006, Spiros flow of G2,Spiros introduce to G2}.

\subsection{$G_{2}$-structure on smooth $7$-manifold}\label{subsection2.1}

Let $\{e_{1},e_{2},\cdots,e_{7}\}$ denote the standard basis of $\mathbb{R}^{7}$ and let $\{e^{1},e^{2},\cdots,e^{7}\}$ be its dual basis. Define the 3-form
$$\phi:=e^{123}+e^{145}+e^{167}+e^{246}-e^{257}-e^{347}-e^{356},$$
where $e^{ijk}:=e^{i}\wedge e^{j}\wedge e^{k}$. When we fix $\phi$, the subgroup of ${\rm GL}(7,\mathbb{R})$ is the exceptional Lie group $G_{2}$, which is a compact, connected, simple $14$-dimensional Lie subgroup of ${\rm SO}(7)$. In fact, $G_{2}$ acts irreducibly on $\mathbb{R}^{7}$ and preserves the metric and orientation for which $\{e_{1},e_{2},\cdots,e_{7}\}$ is an oriented orthonormal basis. Note that $G_{2}$ also preserves the $4$-form
$$\ast_{\phi}^{H}\phi=e^{4567}+e^{2367}+e^{2345}+e^{1357}-e^{1346}-e^{1256}-e^{1247},$$
where $\ast_{\phi}^{H}$ is the Hodge star operator determined by the metric and orientation.

For a smooth $7$-manifold $M$ and a point $x\in M$, we define as in \cite{local curvature of G2,Lotay-Wei Shi-estimate}
\begin{align}
    \wedge_{+}^{3}(T_{x}^{\ast}M):=\left\{\varphi_{x}\in\wedge^{3}(T_{x}^{\ast}M)\Big{|} u^{\ast}\phi=\varphi_{x},
    \text{for\ invertible}\ u\in\text{Hom}_{\mathbb{R}}(T_{x}^{\ast}M,\mathbb{R}^{7})\right\}\notag
\end{align}
and the bundle
\begin{align}
    \wedge_{+}^{3}(T^{\ast}M):=\bigcup_{x\in M}\wedge_{+}^{3}(T_{x}^{\ast}M)\notag.
\end{align}
We call a section $\varphi$ of $\wedge_{+}^{3}(T^{\ast}M)$ a {\it positive $3$-form} on $M$ or a {\it $G_{2}$-structure} on $M$, and denote the space of positive 3-form by $\Omega^{3}_{+}(M)$. The existence {of a $G_{2}$-structure} is equivalent to the
property that $M$ is oriented and spin, which is equivalent to the vanishing of the first two Stiefel-Whitney classes $\omega_{1}(TM)$ and $\omega_{2}(TM)$. For more details, see Theorem 10.6 in \cite{Lawson-Michelsohn}.

For a $3$-form $\varphi$, we define a $\Omega^{7}(M)$-valued bilinear form $\text{B}_{\varphi}$ by
$$\text{B}_{\varphi}(u,v)=\frac{1}{6}(u\lrcorner\varphi)\wedge(v\lrcorner\varphi)\wedge\varphi,$$
where $u, v$ are tangent vectors on $M$ and $``\lrcorner"$ is the interior multiplication operator (\textit{Here we use the orientation in \cite{Bryant 2006}}). Then we can see that any $\varphi\in\Omega^{3}_{+}(M)$ determines a Riemannian metric $g_{\varphi}$ and an orientation $d V_{\varphi}$, hence the Hodge star operator $\ast_{\varphi}^{H}$ and the associated $4$-form
$$\psi:=\ast_{\varphi}^{H}\varphi$$
can also be uniquely determined by $\varphi$. \textbf{Given the $4$-form $\psi$ and the orientation $\ast^{H}$, we can derive the $3$-form $\varphi$, thus we also {call} $\psi$ the $G_{2}$-structure.} If we choose local coordinates $\{x^{1},\cdots,x^{7}\}$ on $M$, then we can write $\varphi,\ \psi$ locally as
$$\varphi=\frac{1}{6}\varphi_{ijk}dx^{i}\wedge dx^{j}\wedge dx^{k},\ \psi=\frac{1}{24}\psi_{ijkl}dx^{i}\wedge dx^{j}\wedge dx^{k}\wedge dx^{l}.$$
The {relations} of $\varphi,\ \psi,\ g$ satisfy the following

\begin{lemma}[\cite{Bryant 2006,Spiros introduce to G2}]\label{lemma2.1}
    In local coordinates on $M$, the tensors $\varphi,\ \psi$ and $g$ satisfy the following relations:
    \begin{align}
        \varphi_{ijk}\varphi_{abc}g^{kc}&=g_{ia}g_{jb}-g_{ib}g_{ja}+\psi_{ijab},\\
        \varphi_{ijk}\varphi_{abc}g^{jb}g^{kc}&=6g_{ia},\\
        \psi_{ijkl}\psi_{abcd}g^{jb}g^{kc}g^{ld}&=24g_{ia},\\
        \varphi_{ijq}\psi_{abkl}g^{ia}g^{jb}&=4\varphi_{qkl}.
    \end{align}
\end{lemma}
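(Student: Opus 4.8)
The plan is to treat all four equations as pointwise tensor identities and reduce them to a single computation on the flat model $(\mathbb{R}^7,\phi,\ast_\phi\phi,g_{\mathrm{std}})$. Fix $x\in M$ and, by the definition of $\Omega^3_+(M)$, choose an invertible $u\in\mathrm{Hom}_{\mathbb R}(T_x^\ast M,\mathbb R^7)$ with $u^\ast\phi=\varphi_x$. Since $\mathrm{B}_\varphi$, the volume form, the Hodge star, and hence $\psi=\ast_\varphi\varphi$ are built from $\varphi$ by $\mathrm{GL}(7,\mathbb R)$-natural operations, and $\phi$ is compatible with the standard metric and orientation on $\mathbb R^7$, one gets $g_\varphi|_x=u^\ast g_{\mathrm{std}}$ and $\psi|_x=u^\ast(\ast_\phi\phi)$. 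Each of the four displayed equations asserts the vanishing of a tensor assembled from $\varphi$, $\psi$, $g$ and $g^{-1}$ by index contractions; since contraction commutes with pullback by a linear isomorphism, it suffices to verify each identity for $\phi$, $\ast_\phi\phi$ and $\delta_{ij}$ on $\mathbb R^7$. Henceforth I work in the standard orthonormal coframe, where $\varphi_{ijk}$ is totally antisymmetric and equals $\pm1$ exactly when $\{i,j,k\}$ is one of the seven triples $\{1,2,3\},\{1,4,5\},\{1,6,7\},\{2,4,6\},\{2,5,7\},\{3,4,7\},\{3,5,6\}$ (the lines of the Fano plane), and $\psi_{ijkl}$ equals $\pm1$ exactly when $\{i,j,k,l\}$ is the complement of one of these triples.

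The crux is the first identity; the remaining three then follow by pure contraction. To prove (1) in the flat model, fix $i\neq j$ (the case $i=j$ is trivial on both sides). Then $\varphi_{ijk}\neq0$ for a unique $k=m$ — the third element of the triple through $i,j$ — so $\sum_k\varphi_{ijk}\varphi_{abk}=\varphi_{ijm}\varphi_{abm}$, and one checks, over the finitely many positions of the pair $\{a,b\}$ relative to $\{i,j,m\}$, that this equals $\delta_{ia}\delta_{jb}-\delta_{ib}\delta_{ja}+\psi_{ijab}$: if $\{a,b\}=\{i,j\}$ the Kronecker part gives $\pm1=\varphi_{ijm}^2$ while $\psi_{ijab}=0$; if $\{i,j,a,b\}$ are four distinct indices whose union is the complement of a line, the $\psi$-term contributes $\pm1$ and matches $\varphi_{ijm}\varphi_{abm}$ once the two relevant orientations are compared, with vanishing Kronecker part; in all other cases both sides vanish. (This is the standard octonionic cross-product identity $\langle X\times Y,Z\times W\rangle=\langle X,Z\rangle\langle Y,W\rangle-\langle X,W\rangle\langle Y,Z\rangle-\psi(X,Y,Z,W)$; one may shorten the check by first using $G_2$-invariance to reduce the number of index configurations.) This sign bookkeeping is the only genuine labor in the lemma.

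Finally I deduce (2), (4) and (3). Contracting (1) with $g^{jb}$ and using $g^{jb}g_{jb}=7$, $g^{jb}g_{ib}g_{ja}=g_{ia}$ and $g^{jb}\psi_{ijab}=0$ (antisymmetry of $\psi$ in the two contracted slots) yields $\varphi_{ijk}\varphi_{abc}g^{jb}g^{kc}=6g_{ia}$, which is (2). Rewriting (1) as $\psi_{abkl}=\varphi_{abm}\varphi_{kln}g^{mn}-g_{ak}g_{bl}+g_{al}g_{bk}$, multiplying by $\varphi_{ijq}$ and contracting the first two indices of $\psi$ against those of $\varphi$ while using (2) in the form $\varphi_{ijq}\varphi_{ijm}=6g_{qm}$, the three resulting terms combine as $6\varphi_{qkl}-\varphi_{qkl}-\varphi_{qkl}=4\varphi_{qkl}$, which is (4). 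For (3), substitute $\psi_{ajkl}=\varphi_{ajm}\varphi_{kln}g^{mn}-g_{ak}g_{jl}+g_{al}g_{jk}$ into $\psi_{ijkl}\psi_{abcd}g^{jb}g^{kc}g^{ld}$: the last two terms vanish by antisymmetry of $\psi_{ijkl}$ in the repeated index, while the first term becomes $24g_{ia}$ after applying (4) in the form $\psi_{ijkl}\varphi_{kln}=4\varphi_{nij}$ (using $\psi_{ijkl}=\psi_{klij}$) and then (2) in the form $\varphi_{ijm}\varphi_{ajm}=6g_{ia}$. The main obstacle throughout is the orientation/sign analysis behind (1); the reduction to the flat model in the first step and the successive contractions in the last step are formal.
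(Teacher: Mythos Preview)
Your argument is correct and complete. The paper does not supply its own proof of this lemma: it is stated with citations to Bryant and Karigiannis as a standard preliminary fact, so there is nothing substantive to compare against. Your strategy---reduce pointwise to the flat model via the defining isomorphism $u$, verify the key identity (2.1) there by a finite case check over the Fano-plane incidence structure, and then obtain (2.2), (2.4), (2.3) by successive contractions---is exactly the standard route taken in the references the paper cites.

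Two minor cosmetic points. First, the parenthetical cross-product identity you quote carries a $-\psi$, whereas the paper's stated form of (2.1) has $+\psi$; this sign depends on the orientation convention (the paper explicitly adopts Bryant's), so make sure your case-by-case check is carried out with the sign matching the paper's convention. Second, in your derivation of (2.3) you write ``substitute $\psi_{ajkl}=\ldots$'' where the indices should read $\psi_{abcd}$ to match the factor you are replacing in $\psi_{ijkl}\psi_{abcd}g^{jb}g^{kc}g^{ld}$; the computation that follows is nonetheless correct once this typo is fixed.
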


\subsection{$G_{2}$-decomposition of $\Omega^{2}(M)$ and $\Omega^{3}(M)$}\label{subsection2.2} 

The group $G_{2}$ acts irreducibly on $\mathbb{R}^{7}$ (and hence on $\wedge^{1}(\mathbb{R}^{7})^{\ast}$ and $\wedge^{6}(\mathbb{R}^{7})^{\ast}$), but it
acts reducibly on $\wedge^{k}(\mathbb{R}^{7})^{\ast}$ for $2\leq k\leq 5$. Hence a $G_{2}$ structure $\varphi$ induces splittings
of the bundles $\wedge^{k}(T^{\ast}M)(2\leq k\leq5)$ into direct summands, which we denote by
$\wedge^{k}_{l}(T^{\ast}M,\varphi)$ with $l$ being the rank of the bundle. We let the space of sections
of $\wedge^{k}_{l}(T^{\ast}M,\varphi)$ be $\Omega^{k}_{l}(M)$. Define the natural projections
$$\pi^{k}_{l}:\Omega^{k}(M)\longrightarrow \Omega^{k}_{l}(M),\ \ \alpha\longmapsto \pi^{k}_{l}(\alpha).$$
Then we have
\begin{align}
    \Omega^{2}(M)&=\Omega^{2}_{7}(M)\oplus\Omega^{2}_{14}(M),\notag\\
    \Omega^{3}(M)&=\Omega^{3}_{1}(M)\oplus\Omega^{3}_{7}(M)\oplus\Omega^{3}_{27}(M)\notag.
\end{align}
where each component is determined by
\begin{align}
    \Omega^{2}_{7}(M)&=\{X\lrcorner\varphi:X\in C^{\infty}(TM)\}=\{\beta\in\Omega^{2}(M):\ast_{\varphi}^{H}(\varphi\wedge\beta)=2\beta\},\notag\\
    \Omega^{2}_{14}(M)&=\{\beta\in\Omega^{2}(M):\psi\wedge\beta=0\}=\{\beta\in\Omega^{2}(M):\ast_{\varphi}^{H}(\varphi\wedge\beta)=-\beta\},\notag
\end{align}
and
\begin{align}
    \Omega^{3}_{1}(M)&=\{f\varphi:f\in C^{\infty}(M)\},\notag\\
    \Omega^{3}_{7}(M)&=\{\ast_{\varphi}^{H}(\varphi\wedge\alpha):\alpha\in\Omega^{1}(M)\}=\{X\lrcorner\psi:X\in C^{\infty}(TM)\},\notag\\
    \Omega^{3}_{27}(M)&=\{\eta\in\Omega^{3}(M):\eta\wedge\varphi=\eta\wedge\psi=0\}.\notag
\end{align}

\begin{remark}\label{remark2.2}
    $\Omega^{4}$ and $\Omega^{5}$ have the corresponding decompositions by Hodge duality. The more details for $G_{2}$-decomposition see \cite{Bryant 2006,Spiros introduce to G2}.
\end{remark}

\subsection{The torsion tensors of $G_{2}$-structure}
\label{subsection2.3}

By the definition  of $G_{2}$ decomposition, we can find unique differential forms
$\tau_{0}\in\Omega^{0}(M),\tau_{1},\widetilde{\tau}_{1}\in\Omega^{1}(M),\tau_{2}\in\Omega^{2}_{14}(M)$ and $\tau_{3}\in\Omega^{3}_{27}(M)$ such that (see \cite{Bryant 2006})
\begin{align}
    d\varphi&=\tau_{0}\psi+3\!\ \tau_{1}\wedge\varphi+\ast_{\varphi}^{H}\tau_{3},\\
    d\psi&=4\!\ \widetilde{\tau}_{1}\wedge\psi+\tau_{2}\wedge\varphi.
\end{align}
In fact, {Bryant \cite{Bryant 2006}}  proved $\tau_{1}=\widetilde{\tau}_{1}$. We call $\tau_{0}$ the \textit{scalar torsion}, $\tau_{1}$ the \textit{vector torsion}, $\tau_{2}$ the \textit{Lie algebra torsion}, and $\tau_{3}$ the \textit{symmetric traceless torsion}. We also call $\tau_{\varphi}:=\{\tau_{0},\tau_{1},\tau_{2},\tau_{3}\}$ the intrinsic torsion forms of the $G_{2}$-structure $\varphi$.
From \cite{Spiros flow of G2,Spiros introduce to G2}, the full torsion tensor $\mathbf{T}=\mathbf{T}_{ij}dx^{i}\otimes dx^{j}$ satisfies the following:
\begin{align}
\label{torsion}
    \nabla_{i}\varphi_{jkl}&=\mathbf{T}_{i}^{\ m}\psi_{mjkl},\\
    \mathbf{T}_{i}^{\ j}&=\frac{1}{24}\nabla_{i}\varphi_{lmn}\psi^{jlmn},
\end{align}
and 
\begin{align}
\label{torsion of 4-form}
    \nabla_{m}\psi_{ijkl}=-(\mathbf{T}_{mi}\varphi_{jkl}-\mathbf{T}_{mj}\varphi_{ikl}-\mathbf{T}_{mk}\varphi_{jil}-\mathbf{T}_{ml}\varphi_{jki}),
\end{align}
{where $\nabla$ is the Levi-Civita connection of the metric that is determined by $\psi$.}
The full torsion tensor $\mathbf{T}_{ij}$ is related to the
intrinsic torsion forms by the following:
\begin{align}
\label{Def of T}\mathbf{T}_{ij}=\frac{\tau_{0}}{4}g_{ij}-(\tau_{3})_{ij}-(\tau_{1}^{\#}\lrcorner\varphi)_{ij}-\frac{1}{2}(\tau_{2})_{ij}
\end{align}
where $(\tau_{1}^{\#}\lrcorner\varphi)_{ij}=(\tau_{1}^{\#})^{l}\varphi_{lij}$ and $\#$ is the isomorphism from $1$-form to vector fields.

${}$

If $\varphi$ is coclosed, which means $d\psi=0$, then $\tau_{1}$ and $\tau_{2}$ are all zero from (2.6), so the nonzero torsion forms are
$\tau_{1}$ and $\tau_{3}$.
Thus according to Proposition 2.31 in \cite{Spiros flow of G2} and (2.10), we get
$$\mathbf{T}_{ij}=\frac{\tau_{0}}{4}g_{ij}-(\tau_{3})_{ij}$$
is a fully symmetric 2-tensor. {From Lemma $4.5$ in \cite{Grigorian-2013}, the torsion tensor $\mathbf{T}$ satisfies
$${\rm div}\ \mathbf{T}=\nabla\ {\rm tr}(\mathbf{T}).$$}

\subsection{Definitions and conventions of metric}\label{subsection2.4}
The Riemann curvature $(3,1)$-tensor field ${\rm Rm}$ of $g$ is defined by
$$
{\rm Rm}(X,Y)Z:=\nabla_{X}\nabla_{Y}Z-\nabla_{Y}\nabla_{X}Z-\nabla_{[X,Y]}Z,
$$
where $X,Y,Z$ are vector fields on $M$. In local coordinates $\{x^{1},\cdots,x^{7}\}$ on $M$, if we choose $\displaystyle{X=\frac{\partial}{\partial x^{i}}, Y=\frac{\partial}{\partial x^{j}}, Z=\frac{\partial}{\partial x^{k}}}$, $1\leq i,j,k\leq 7$, then we can write
$$
{\rm Rm}\left(\frac{\partial}{\partial x^{i}},\frac{\partial}{\partial x^{j}}\right)\frac{\partial}{\partial x^{k}}=R_{ijk}^{\ \ \ l}\frac{\partial}{\partial x^{l}},\ \ R_{ijkl}=g^{lm}R_{ijk}^{\ \ \ m}.
$$

We define the inner product for any two $k$-tensors $A=A_{i_{1}\cdots i_{k}}dx^{i_{1}}\otimes\cdots\otimes dx^{i_{k}},B=B_{i_{1}\cdots i_{k}}dx^{i_{1}}\otimes\cdots\otimes dx^{i_{k}}$ as
$$
\langle A,B\rangle=A_{i_{1}\cdots i_{k}}B^{i_{1}\cdots i_{k}},
$$
then the norm of any $k$-tensor $A=A_{i_{1}\cdots i_{k}}dx^{i_{1}}\otimes\cdots\otimes dx^{i_{k}}$ is
\begin{align}\label{norm}
|A|^{2}=\langle A,A\rangle=A_{i_{1}\cdots i_{k}}A^{i_{1}\cdots i_{k}},
\end{align}
We denote 
$$\triangle_{g(t)}=g^{ij}\nabla_{i}\nabla_{j}$$
the trace Laplacian. For convenience, we always omit
the time variable $t$.

\section{Modified Laplacian coflow and evolution equations}\label{section3}
In this section, we give some evolution equations of the modified Laplacian coflow $\eqref{modified Laplacian coflow}$, which will be used in Section \ref{section4}.

In \cite{Spiros flow of G2}, the evolution equation of $\psi$ under the general flow can be written as the following:
\begin{align}
        \frac{\partial}{\partial t}\psi_{njkl}&=g^{pq}(h_{np}\psi_{qjkl}+h_{jp}\psi_{nqkl}+h_{kp}\psi_{njql}+h_{lp}\psi_{njkq})\notag\\
        &\quad-X_{n}\varphi_{jkl}+X_{j}\varphi_{nkl}-X_{k}\varphi_{njl}+X_{l}\varphi_{njk}\notag,
\end{align}
where $h$ is the symmetric $2$-tensor and $X$ is the vector field. When {closed} $G_{2}$-structure $\psi$ is evolved by  modified Laplacian coflow $\eqref{modified Laplacian coflow}$, {the full torsion tensor $\mathbf{T}_{ij}$ is symmetric,} from \cite{Chen Shi-estimates,Grigorian-2013}, the symmetric $2$-tensor and vector field $X$ are
\begin{align}
h_{ij}=-R_{ij}+\frac{1}{2}\mathbf{T}^{km}\mathbf{T}^{ln}\varphi_{ikl}\varphi_{jmn}+(2A-{\rm tr}(\mathbf{T}))\mathbf{T}_{ij},\quad X=\nabla{\rm tr}(\mathbf{T}).
\end{align}
{Thus,} the modified Laplacian coflow can be written as
\begin{equation}
\left\{
\begin{array}{rl}
\frac{\partial}{\partial t}g_{ij}&=-2R_{ij}+\mathbf{T}^{km}\mathbf{T}^{ln}\varphi_{ikl}\varphi_{jmn}+(4A-2{\rm tr}(\mathbf{T}))\mathbf{T}_{ij},\\
X&=\nabla{\rm tr}(\mathbf{T}).\\
\end{array}
\right.
\label{Ricci-like coflow}
\end{equation}

Next we calculate some evolution equations under the modified Laplacian coflow $\eqref{modified Laplacian coflow}$.
\begin{lemma}\label{lemma 3.1}
    Under the modified Laplacian coflow $\eqref{modified Laplacian coflow}$, the full torsion $\mathbf{T}$ {satisfies the following evolution equation:}
    \begin{align}
       \left(\frac{\partial}{\partial t}-\triangle\right)\mathbf{T}&={\rm Rm}\ast\mathbf{T}+A\ast\nabla\mathbf{T}\ast\varphi+\nabla\mathbf{T}\ast\mathbf{T}\ast\varphi+\nabla\mathbf{T}\ast\mathbf{T}\ast\varphi\ast\varphi\ast\varphi\notag\\ &\quad+\mathbf{T}\ast\nabla\varphi\ast\nabla\varphi+\mathbf{T}\ast\mathbf{T}\ast\mathbf{T}+A\ast\mathbf{T}\ast\mathbf{T}+\mathbf{T}\ast\mathbf{T}\ast\mathbf{T}\ast\varphi\ast\varphi\notag.
    \end{align}   
\end{lemma}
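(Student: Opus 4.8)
The plan is to derive the evolution equation for the full torsion tensor $\mathbf{T}$ by combining three ingredients: the evolution of the metric $g$ under $\eqref{Ricci-like coflow}$, the evolution of $\varphi$ (equivalently $\psi$) under the general-flow formula quoted at the start of Section \ref{section3}, and the defining identity $\mathbf{T}_{i}^{\ j}=\frac{1}{24}\nabla_{i}\varphi_{lmn}\psi^{jlmn}$ together with the algebraic $G_{2}$ relations of Lemma \ref{lemma2.1}. First I would differentiate the identity $\mathbf{T}_{i}^{\ j}=\frac{1}{24}(\nabla_{i}\varphi_{lmn})\psi^{jlmn}$ in $t$. This produces terms of the schematic types $(\partial_{t}\nabla_{i}\varphi_{lmn})\psi^{jlmn}$, $(\nabla_{i}\varphi_{lmn})(\partial_{t}\psi^{jlmn})$, and a contribution from $\partial_{t}$ hitting the inverse metrics hidden in raising indices, which is controlled by $\partial_{t}g = {\rm Rm}\ast 1 + \mathbf{T}\ast\mathbf{T} + A\ast\mathbf{T}$. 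Since $\partial_{t}\psi$ is itself expressible via $h$ and $X$ as in the general flow, and both $h$ and $X$ are given by $\eqref{Ricci-like coflow}$ to be of the form ${\rm Rm} + \mathbf{T}\ast\mathbf{T}\ast\varphi\ast\varphi + A\ast\mathbf{T}$ and $\nabla\mathbf{T}$ respectively, all the $\partial_{t}$-derived terms can be collected into the schematic classes appearing on the right-hand side of the Lemma, provided one also commutes $\partial_{t}$ past $\nabla_{i}$, which generates a $\Gamma$-variation term $(\partial_{t}\Gamma)\ast\varphi$; and $\partial_{t}\Gamma$ is of the form $\nabla({\rm Rm}) + \nabla(\mathbf{T}\ast\mathbf{T}) + A\nabla\mathbf{T}$ — wait, more carefully $\partial_t\Gamma = g^{-1}\ast\nabla(\partial_t g)$, so it contributes $\nabla\mathbf{T}\ast\mathbf{T}$, $A\nabla\mathbf{T}$, and $\nabla{\rm Rm}$ terms; the latter must ultimately cancel or be absorbed.

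The second main step is to introduce the Laplacian. Rather than trying to produce $\triangle\mathbf{T}$ directly, I would use the Bianchi-type / Weitzenböck machinery available for $G_{2}$-structures: there is a known identity (going back to Karigiannis \cite{Spiros flow of G2} and used by Lotay--Wei \cite{Lotay-Wei Shi-estimate} in the closed case) expressing the ``rough'' time derivative of $\mathbf{T}$ in terms of $\nabla^2$ of the driving tensor $h$ plus curvature and lower-order torsion terms. Concretely, for a general flow $\partial_t\varphi = \ldots$ one has a formula of the shape $\partial_t \mathbf{T}_{ij} = \nabla_i \mathbf{T}_j{}^k \cdot(\text{stuff}) + (\nabla h)\ast\varphi + {\rm Rm}\ast\mathbf{T} + \ldots$; specializing $h = -{\rm Ric} + \mathbf{T}\ast\mathbf{T}\ast\varphi\ast\varphi + (2A-{\rm tr}\mathbf{T})\mathbf{T}$ and using the contracted second Bianchi identity $\nabla^i R_{ij} = \frac12\nabla_j R$ together with the $G_{2}$ identity relating ${\rm Ric}$, $\nabla\mathbf{T}$ and $\mathbf{T}\ast\mathbf{T}$ (namely $R_{ij} = -\nabla_k\mathbf{T}_{ij}\ast\varphi + \mathbf{T}\ast\mathbf{T} + \ldots$, so that $\triangle$ emerges from $\nabla\nabla\mathbf{T}$), the Laplacian $\triangle\mathbf{T}$ appears with the correct coefficient, and everything left over is curvature times torsion, $A$ times $\nabla\mathbf{T}\ast\varphi$, $\nabla\mathbf{T}\ast\mathbf{T}\ast\varphi$ (possibly with extra $\varphi$'s from contracting the $\mathbf{T}\ast\mathbf{T}\ast\varphi\ast\varphi$ part of $h$), $\mathbf{T}\ast\nabla\varphi\ast\nabla\varphi = \mathbf{T}\ast\mathbf{T}\ast\mathbf{T}\ast\psi\ast\psi$-type terms from $\nabla_i\varphi = \mathbf{T}\ast\psi$ via $\eqref{torsion}$, and the cubic torsion terms $\mathbf{T}\ast\mathbf{T}\ast\mathbf{T}$, $A\ast\mathbf{T}\ast\mathbf{T}$ coming from the $(2A-{\rm tr}\mathbf{T})\mathbf{T}$ factor. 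Throughout I would use $\ast$-notation loosely: any fixed algebraic contraction built from $g$, $\varphi$, $\psi$ and the stated relations of Lemma \ref{lemma2.1} is allowed, so the bookkeeping is about \emph{which} tensors appear and with how many derivatives, not about exact coefficients.

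The third step is just matching the resulting schematic expression to the list in the statement: confirm that no term with more than two derivatives total on the right survives (the dangerous $\nabla{\rm Rm}$ and $\nabla^2\mathbf{T}$ contributions from $\partial_t\Gamma$ and from $\triangle$ of the $\mathbf{T}\ast\mathbf{T}\ast\varphi\ast\varphi$ term must reorganize into the single allowed $\triangle\mathbf{T}$ on the left plus first-order terms), and that each remaining term is of one of the eight listed types: ${\rm Rm}\ast\mathbf{T}$; $A\ast\nabla\mathbf{T}\ast\varphi$; $\nabla\mathbf{T}\ast\mathbf{T}\ast\varphi$; $\nabla\mathbf{T}\ast\mathbf{T}\ast\varphi\ast\varphi\ast\varphi$; $\mathbf{T}\ast\nabla\varphi\ast\nabla\varphi$; $\mathbf{T}\ast\mathbf{T}\ast\mathbf{T}$; $A\ast\mathbf{T}\ast\mathbf{T}$; and $\mathbf{T}\ast\mathbf{T}\ast\mathbf{T}\ast\varphi\ast\varphi$.

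The main obstacle I anticipate is the cancellation of the top-order terms: naively, $\partial_t\mathbf{T}$ contains $\nabla{\rm Rm}$ (from $\partial_t\Gamma$ acting on $\varphi$ inside $\nabla\varphi$) and a $\triangle$ of the quadratic-torsion part of $h$, so one gets terms with three derivatives before simplification. Showing these combine — via the contracted second Bianchi identity and the $G_{2}$ Weitzenböck identity expressing Ricci through $\nabla\mathbf{T}$ — into exactly $\triangle\mathbf{T}$ plus the admissible lower-order terms is the delicate computation; this is exactly the step where Karigiannis's identities $\eqref{torsion}$--$\eqref{torsion of 4-form}$ and Lemma \ref{lemma2.1} are indispensable, and where the coclosed hypothesis (so that $\mathbf{T}$ is symmetric and $\tau_1=\tau_2=0$) simplifies the algebra. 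I would handle it by first recording the known second-order identity $R_{ij} = (\nabla\mathbf{T})_{\text{sym part}}\ast\varphi + \mathbf{T}\ast\mathbf{T}$-type formula and the divergence identity $\nabla^m\mathbf{T}_{mi} - \nabla_i{\rm tr}\mathbf{T} = \mathbf{T}\ast\mathbf{T}\ast\varphi + {\rm Rm}\ast\varphi$ (the ``$G_{2}$-Bianchi'' identity), and then substituting systematically.
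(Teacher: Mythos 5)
Your proposal is a from-scratch schematic derivation, whereas the paper's proof of this lemma is a one-line citation: it invokes Proposition~5.2 of Grigorian \cite{Grigorian-2013}, which already contains the reaction--diffusion equation for $\mathbf{T}$ under the modified Laplacian coflow, and then merely observes that rewriting ${\rm Ric}$ as $\ast$ of ${\rm Rm}$, ${\rm tr}(\mathbf{T})$ as $\ast$ of $\mathbf{T}$, and $\nabla\varphi=\mathbf{T}\ast\psi$ via $\eqref{torsion}$ puts Grigorian's equation into the stated $\ast$-schematic form. Your outline is in effect a reconstruction of the argument \emph{underlying} the cited proposition: differentiate the defining relation for $\mathbf{T}$ in time, substitute the driving tensors $h$ and $X$ of $\eqref{Ricci-like coflow}$, and use the contracted second Bianchi identity together with the $G_2$ curvature/torsion identities to make $\triangle\mathbf{T}$ emerge and absorb the dangerous $\nabla{\rm Rm}$ and $\nabla^2(\mathbf{T}\ast\mathbf{T}\ast\varphi\ast\varphi)$ contributions. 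That is the correct mechanism, and the obstacle you flag --- the top-order cancellation that produces the clean parabolic operator --- is real and is resolved exactly as you describe; it is precisely where Grigorian's computation does the hard work. So your route is mathematically sound but substantially longer, and it would force you to carry precise coefficients through the $G_2$ contractions only to discard them when converting to $\ast$-notation at the end. The paper's approach buys brevity by leaning on a published exact evolution equation, which is appropriate here since the lemma is used downstream only in schematic form for the Shi-type and analyticity estimates. If you wanted to write out your version fully, the cleanest way to organize the top-order cancellation is to first record the $G_2$ identity expressing $R_{ij}$ through the symmetrized $\nabla\mathbf{T}\ast\varphi$ plus $\mathbf{T}\ast\mathbf{T}$ terms (the ``$G_2$-Weitzenb\"ock'' you mention), so that $-{\rm Ric}$ inside $h$ literally supplies the Laplacian after one more divergence; attempting to extract $\triangle\mathbf{T}$ from the $\partial_t\Gamma$ bookkeeping alone, as your first paragraph suggests, is considerably messier.
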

\begin{proof}
    From ${\rm Ric}=7\ast{\rm Rm}$, ${\rm tr}(\mathbf{T})=7\ast\mathbf{T}$ and $\eqref{torsion}$, this evolution equation can be derived from Proposition 5.2 in \cite{Grigorian-2013}.
\end{proof}

\begin{lemma}\label{lemma 3.2}
    Under the modified Laplacian coflow $\eqref{modified Laplacian coflow}$, the Riemann curvature tensor ${\rm Rm}$ {satisfies the following evolution equation:}
    \begin{align}
        \left(\frac{\partial}{\partial t}-\triangle\right){\rm Rm}&={\rm Rm}\ast{\rm Rm}+{\rm Rm}\ast\mathbf{T}\ast\mathbf{T}\ast\varphi\ast\varphi+A\ast{\rm Rm}\ast\mathbf{T}+{\rm Rm}\ast\mathbf{T}\ast\mathbf{T}\notag\\
        &\quad+\nabla^{2}(\mathbf{T}\ast\mathbf{T}\ast\varphi\ast\varphi)+A\ast\nabla^{2}\mathbf{T}+\nabla^{2}(\mathbf{T}\ast\mathbf{T})\notag.
    \end{align}
\end{lemma}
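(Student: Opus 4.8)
The plan is to compute the evolution equation for $\mathrm{Rm}$ directly from the evolution of the metric $g_{ij}$ given in $\eqref{Ricci-like coflow}$, exactly as one does for the Ricci flow. Writing $\partial_t g_{ij} = 2 v_{ij}$ where $v_{ij} = -R_{ij} + \tfrac12 \mathbf{T}^{km}\mathbf{T}^{ln}\varphi_{ikl}\varphi_{jmn} + (2A - \mathrm{tr}(\mathbf{T}))\mathbf{T}_{ij}$, I would first recall the standard general-variation formulas: under $\partial_t g_{ij} = 2v_{ij}$ the Christoffel symbols evolve by $\partial_t \Gamma^k_{ij} = g^{kl}(\nabla_i v_{jl} + \nabla_j v_{il} - \nabla_l v_{ij})$, and hence the full curvature tensor evolves by a schematic expression of the form $\partial_t \mathrm{Rm} = \nabla^2 v + \mathrm{Rm}\ast v$ (this is the Lichnerowicz-type formula, e.g. as in Topping or Chow–Knopf for Ricci flow, specialized here to a general symmetric variation $v$). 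So the first key step is simply: $\left(\tfrac{\partial}{\partial t}\right)\mathrm{Rm} = \nabla^2 v + \mathrm{Rm}\ast v$ schematically.

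The second step is to substitute $v = -\mathrm{Ric} + \tfrac12 \mathbf{T}\ast\mathbf{T}\ast\varphi\ast\varphi + (2A-\mathrm{tr}(\mathbf{T}))\mathbf{T}$ and expand each term. The $\nabla^2(-\mathrm{Ric})$ piece is the one that produces the Laplacian: using the contracted second Bianchi identity and commuting derivatives, $\Delta \mathrm{Rm} = \nabla^2 \mathrm{Ric} + \mathrm{Rm}\ast\mathrm{Rm}$ schematically, so $-\nabla^2\mathrm{Ric}$ combines with a $\mathrm{Rm}\ast\mathrm{Rm}$ term to give $-\Delta\mathrm{Rm} + \mathrm{Rm}\ast\mathrm{Rm}$, after bringing $\Delta$ to the left-hand side. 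For the remaining terms in $v$, I would use that $\mathrm{tr}(\mathbf{T}) = 7\ast\mathbf{T}$ (as noted after Lemma 3.1), that $\nabla\varphi = \mathbf{T}\ast\varphi$ by $\eqref{torsion}$, and hence $\nabla^2\varphi = \nabla\mathbf{T}\ast\varphi + \mathbf{T}\ast\nabla\varphi = \nabla\mathbf{T}\ast\varphi + \mathbf{T}\ast\mathbf{T}\ast\varphi$; applying the Leibniz rule to $\nabla^2(\mathbf{T}\ast\mathbf{T}\ast\varphi\ast\varphi)$, $\nabla^2((2A-\mathrm{tr}(\mathbf{T}))\mathbf{T}) = A\ast\nabla^2\mathbf{T} + \nabla^2(\mathbf{T}\ast\mathbf{T})$ (absorbing the $\varphi$ factors created by differentiating, which are harmless since $|\varphi|$ is controlled), and collecting all the $\mathrm{Rm}\ast v$ contributions into $\mathrm{Rm}\ast\mathrm{Rm} + A\ast\mathrm{Rm}\ast\mathbf{T} + \mathrm{Rm}\ast\mathbf{T}\ast\mathbf{T} + \mathrm{Rm}\ast\mathbf{T}\ast\mathbf{T}\ast\varphi\ast\varphi$. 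Matching everything against the stated right-hand side then finishes the proof.

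The main obstacle I anticipate is purely bookkeeping rather than conceptual: one must be careful that the schematic $\ast$-notation genuinely absorbs all the lower-order terms with the right structure — in particular verifying that every occurrence of $\nabla\varphi$ or $\nabla\psi$ is correctly traded for $\mathbf{T}\ast\varphi$ or $\mathbf{T}\ast\psi$ via $\eqref{torsion}$ and $\eqref{torsion of 4-form}$, and that no term of order higher than $\nabla^2\mathbf{T}$ or $\mathrm{Rm}\ast\mathbf{T}\ast\mathbf{T}$ (times bounded $\varphi$ factors) is produced. A secondary subtlety is the commutator terms arising when one moves $\Delta$ past the covariant derivatives in $\nabla^2\mathrm{Ric}$ versus $\Delta\mathrm{Rm}$; these generate only $\mathrm{Rm}\ast\mathrm{Rm}$-type terms and are already present on the right-hand side, so they cause no trouble. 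Since the precise numerical coefficients are irrelevant in this schematic form, the computation reduces to a careful but routine application of the Leibniz rule, the Bianchi identities, and the torsion identities of Section \ref{section2}.
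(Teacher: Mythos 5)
Your proposal follows the paper's argument essentially verbatim: write $\partial_t g_{ij} = 2h_{ij}$, apply the general variation formula for $\mathrm{Rm}$ under such a flow (the paper cites Lemma 6.5 of Chow--Knopf), substitute the explicit $h$, and trade $-\nabla^2\mathrm{Ric}$ for $-\Delta\mathrm{Rm} + \mathrm{Rm}\ast\mathrm{Rm}$ via the contracted Bianchi identities and commutation of covariant derivatives, exactly as in Ricci flow. One small slip worth flagging: by \eqref{torsion} one has $\nabla\varphi = \mathbf{T}\ast\psi$, not $\mathbf{T}\ast\varphi$ (it is $\nabla\psi = \mathbf{T}\ast\varphi$ by \eqref{torsion of 4-form}); this is harmless for the present lemma because the paper leaves the $\nabla^2(\mathbf{T}\ast\mathbf{T}\ast\varphi\ast\varphi)$, $A\ast\nabla^2\mathbf{T}$, and $\nabla^2(\mathbf{T}\ast\mathbf{T})$ terms unexpanded, exactly as they appear in the statement, so the expansion you contemplate is not actually needed.
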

\begin{proof}
    For the geometric flow 
\begin{align}
\partial_{t}g_{ij}=2h_{ij},
\label{geometric flow}
\end{align}
where $h_{ij}$ is a family of symmetric $2$-tensor. From Lemma 6.5 in \cite{Chow-Knopf 2004}, we have the following evolution equation for {the} Riemann curvature tensor:
\begin{equation}
\frac{\partial}{\partial t}R_{ijk}^{\ \ \ l}=g^{pl}\left\{
\begin{array}{c}
\nabla_{i}\nabla_{k}h_{jp}+\nabla_{j}\nabla_{p}h_{ik} -\nabla_{i}\nabla_{p}h_{jk} -\nabla_{j}\nabla_{k}h_{ip}\\
-R_{ijk}^{\ \ \ q}h_{pq} -R_{ijp}^{\ \ \ q}h_{kq}
\end{array}
\right\}.
\end{equation}
Under the modified Laplacian coflow $\eqref{Ricci-like coflow}$, the $2$-tensor $h$ should be
$$h_{ij}=-R_{ij}+\frac{1}{2}\mathbf{T}^{km}\mathbf{T}^{ln}\varphi_{ikl}\varphi_{jmn}+(2A-{\rm tr}\mathbf{T})\mathbf{T}_{ij}.$$
Thus, the evolution equation of Riemann curvature ${\rm Rm}$ should be
\begin{align}
    \frac{\partial}{\partial t}R_{ijkl}&=\frac{\partial}{\partial t}\left(g_{pl}R_{ijk}^{\ \ \ p}\right)=\left(\frac{\partial}{\partial t}g_{pl}\right)\cdot R_{ijk}^{\ \ \ p}+g_{pl}\cdot \left(\frac{\partial}{\partial t}R_{ijk}^{\ \ \ p}\right)\\   &=\left[-2R_{pl}+\mathbf{T}^{am}\mathbf{T}^{bn}\varphi_{pal}\varphi_{lbn}+(4A-2{\rm tr}(\mathbf{T}))\mathbf{T}_{pl}\right]R_{ijk}^{\ \ \ p}\notag\\
    &\quad+g_{pl}\cdot \left(\frac{\partial}{\partial t}R_{ijk}^{\ \ \ p}\right)\notag.
\end{align}
For the second term, we have
\begin{align}
    &\quad\nabla_{i}\nabla_{k}h_{jl}+\nabla_{j}\nabla_{l}h_{ik} -\nabla_{i}\nabla_{l}h_{jk} -\nabla_{j}\nabla_{k}h_{il}\notag\\
    &=-\nabla_{i}\nabla_{k}R_{jl}-\nabla_{j}\nabla_{l}R_{ik} +\nabla_{i}\nabla_{l}R_{jk} +\nabla_{j}\nabla_{k}R_{il}\notag\\
    &\quad+\frac{1}{2}\nabla_{i}\nabla_{k}(\mathbf{T}^{pm}\mathbf{T}^{qn}\varphi_{jpq}\varphi_{lmn})+\frac{1}{2}\nabla_{j}\nabla_{l}(\mathbf{T}^{pm}\mathbf{T}^{qn}\varphi_{ipq}\varphi_{kmn})\notag\\
    &\quad-\frac{1}{2}\nabla_{i}\nabla_{l}(\mathbf{T}^{pm}\mathbf{T}^{qn}\varphi_{jpq}\varphi_{kmn})-\frac{1}{2}\nabla_{j}\nabla_{k}(\mathbf{T}^{pm}\mathbf{T}^{qn}\varphi_{ipq}\varphi_{lmn})\notag\\  &\quad+2A(\nabla_{i}\nabla_{k}\mathbf{T}_{jl}+\nabla_{j}\nabla_{l}\mathbf{T}_{ik} -\nabla_{i}\nabla_{l}\mathbf{T}_{jk}-\nabla_{j}\nabla_{k}\mathbf{T}_{il})\notag\\
    &\quad-\left[\nabla_{i}\nabla_{k}({\rm tr}(\mathbf{T})\mathbf{T}_{jl})+\nabla_{j}\nabla_{l}({\rm tr}(\mathbf{T})\mathbf{T}_{ik}) -\nabla_{i}\nabla_{l}({\rm tr}(\mathbf{T})\mathbf{T}_{jk})-\nabla_{j}\nabla_{k}({\rm tr}(\mathbf{T})\mathbf{T}_{il})\right]\notag.
\end{align}
As in Ricci flow, {using} the Bianchi {identities} and commuting covariant derivatives, one gets
\begin{align}
    &\quad-\nabla_{i}\nabla_{k}R_{jl}-\nabla_{j}\nabla_{l}R_{ik}+\nabla_{i}\nabla_{l}R_{jk} +\nabla_{j}\nabla_{k}R_{il}\notag\\
    &=\triangle R_{ijkl}+R_{ijpr}R^{rp}_{\ \ kl}-2R^{p\ \ r}_{\ ik}R_{jprl}+2R^{p}_{\ irl}R_{jpk}^{\ \ \ r}-g^{pq}(R_{ip}R_{qjkl}+R_{jp}R_{iqkl})\notag,
\end{align}
and
\begin{align}
    &\quad-R_{ijk}^{\ \ \ p}h_{pl}-R_{ijl}^{\ \ \ p}h_{kp}\notag\\
    &=R_{ijk}^{\ \ \ p}R_{pl}+R_{ijl}^{\ \ \ p}R_{kp}-\frac{1}{2}R_{ijk}^{\ \ \ p}\mathbf{T}^{am}\mathbf{T}^{bn}\varphi_{qab}\varphi_{lmn}-\frac{1}{2}R_{ijl}^{\ \ \ p}\mathbf{T}^{am}\mathbf{T}^{bn}\varphi_{qab}\varphi_{kmn}\notag\\
    &\quad-2A(R_{ijk}^{\ \ \ p}\mathbf{T}_{pl}+R_{ijl}^{\ \ \ p}\mathbf{T}_{kp})+{\rm tr}\mathbf{T}(R_{ijk}^{\ \ \ p}\mathbf{T}_{pl}+R_{ijl}^{\ \ \ p}\mathbf{T}_{kp})\notag.
\end{align}
Together with the above, and note that ${\rm Ric}=7\ast{\rm Rm}$, ${\rm tr}(\mathbf{T})=7\ast\mathbf{T}$, we get the desired evolution equation.
\end{proof}

\begin{lemma}\label{lemma 3.3}
    Under the modified Laplacian coflow $\eqref{modified Laplacian coflow}$, $\nabla\varphi$ {satisfies the following evolution equation:} 
    \begin{align}
        \left(\frac{\partial}{\partial t}-\triangle\right)\nabla\varphi&={\rm Rm}\ast\mathbf{T}\ast\psi+\mathbf{T}\ast\nabla\varphi\ast\nabla\varphi\ast\psi+A\ast\mathbf{T}\ast\mathbf{T}\ast\psi\notag\\  &\quad+\nabla\mathbf{T}\ast(\mathbf{T}\ast\varphi+A\ast\varphi\ast\psi+\mathbf{T}\ast\varphi\ast\psi+\mathbf{T}\ast\varphi\ast\varphi\ast\varphi\ast\psi)\notag\\
        &\quad+\mathbf{T}\ast\mathbf{T}\ast\mathbf{T}\ast(\psi+\varphi\ast\varphi\ast\psi)\notag.
    \end{align}
\end{lemma}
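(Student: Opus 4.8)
The plan is to compute $\left(\frac{\partial}{\partial t}-\triangle\right)\nabla\varphi$ directly, by differentiating the defining relation $\nabla_{i}\varphi_{jkl}=\mathbf{T}_{i}^{\ m}\psi_{mjkl}$ from \eqref{torsion} and then invoking the already-established evolution equations for $\mathbf{T}$ (Lemma \ref{lemma 3.1}) and for $\psi$ (via the flow equation together with the torsion identity \eqref{torsion of 4-form}). First I would apply $\frac{\partial}{\partial t}$ to $\nabla_{i}\varphi_{jkl}=\mathbf{T}_{i}^{\ m}\psi_{mjkl}$; the left side produces $\frac{\partial}{\partial t}\nabla_{i}\varphi_{jkl}$, which differs from $\nabla_{i}\frac{\partial}{\partial t}\varphi_{jkl}$ by a term of the schematic form $(\frac{\partial}{\partial t}\Gamma)\ast\varphi$. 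Since $\frac{\partial}{\partial t}\Gamma$ is built from $\nabla h$ with $h$ as in \eqref{Ricci-like coflow}, that commutator term is schematically $\nabla({\rm Rm}+\mathbf{T}\ast\mathbf{T}\ast\varphi\ast\varphi+A\ast\mathbf{T}+\mathbf{T}\ast\mathbf{T})\ast\varphi$, i.e.\ $\nabla{\rm Rm}\ast\varphi+\nabla\mathbf{T}\ast\mathbf{T}\ast\varphi\ast\varphi\ast\varphi+A\ast\nabla\mathbf{T}\ast\varphi+\nabla\mathbf{T}\ast\mathbf{T}\ast\varphi+(\text{lower order})$. I would absorb the $\nabla{\rm Rm}\ast\varphi$ type terms carefully — actually the statement has no bare $\nabla{\rm Rm}$, so I expect these to either cancel or get rewritten; more likely the correct strategy is to differentiate the relation $\nabla^2\varphi$ rather than guess, so let me reorganize.

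A cleaner route: start from $\varphi$ itself. Under the flow, $\frac{\partial}{\partial t}\varphi_{jkl}$ is expressible (Karigiannis's general formula, specialized via \eqref{Ricci-like coflow}) as $h\ast\varphi + X\ast\psi$, hence schematically $({\rm Rm}+\mathbf{T}\ast\mathbf{T}\ast\varphi\ast\varphi+A\ast\mathbf{T}+\mathbf{T}\ast\mathbf{T})\ast\varphi+\nabla\mathbf{T}\ast\psi$. Then $\left(\frac{\partial}{\partial t}-\triangle\right)\varphi$ can be obtained, and $\nabla$ commutes with neither $\frac{\partial}{\partial t}$ nor $\triangle$ exactly, but the commutators are controlled: $[\nabla,\frac{\partial}{\partial t}]$ costs $\frac{\partial}{\partial t}\Gamma\ast(\,\cdot\,)$ and $[\nabla,\triangle]$ costs ${\rm Rm}\ast\nabla(\,\cdot\,)+\nabla{\rm Rm}\ast(\,\cdot\,)$. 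Applying these to $\varphi$ gives extra terms ${\rm Rm}\ast\nabla\varphi$, $\nabla{\rm Rm}\ast\varphi$, and $\frac{\partial}{\partial t}\Gamma\ast\varphi$. To kill the $\nabla{\rm Rm}$ terms I would use that $\nabla\varphi=\mathbf{T}\ast\psi$ is itself only first order in $\mathbf{T}$, so I should instead differentiate the \emph{identity} $\nabla_{i}\varphi_{jkl}=\mathbf{T}_{i}^{\ m}\psi_{mjkl}$ and transfer all the work onto $\mathbf{T}$ and $\psi$: then $\left(\frac{\partial}{\partial t}-\triangle\right)(\nabla\varphi)$ becomes, by the product/Leibniz rule for the heat operator, $\big[\left(\frac{\partial}{\partial t}-\triangle\right)\mathbf{T}\big]\ast\psi+\mathbf{T}\ast\big[\left(\frac{\partial}{\partial t}-\triangle\right)\psi\big]-2\nabla\mathbf{T}\ast\nabla\psi$, plus the commutator term from $\frac{\partial}{\partial t}$ not commuting with $\nabla$ in $\nabla\varphi$ versus $\mathbf{T}\ast\psi$ (which actually vanishes since the identity is tensorial — both sides transform identically, so $\frac{\partial}{\partial t}$ of the identity is exact).

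Now I substitute: for $\left(\frac{\partial}{\partial t}-\triangle\right)\mathbf{T}$ use Lemma \ref{lemma 3.1} verbatim, tensored with $\psi$; this yields ${\rm Rm}\ast\mathbf{T}\ast\psi$, $A\ast\nabla\mathbf{T}\ast\varphi\ast\psi$, $\nabla\mathbf{T}\ast\mathbf{T}\ast\varphi\ast\psi$, $\nabla\mathbf{T}\ast\mathbf{T}\ast\varphi\ast\varphi\ast\varphi\ast\psi$, $\mathbf{T}\ast\nabla\varphi\ast\nabla\varphi\ast\psi$, $\mathbf{T}\ast\mathbf{T}\ast\mathbf{T}\ast\psi$, $A\ast\mathbf{T}\ast\mathbf{T}\ast\psi$, and $\mathbf{T}\ast\mathbf{T}\ast\mathbf{T}\ast\varphi\ast\varphi\ast\psi$. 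For $\mathbf{T}\ast\left(\frac{\partial}{\partial t}-\triangle\right)\psi$: from the flow $\frac{\partial}{\partial t}\psi=\Delta_\psi\psi+2d[(A-{\rm tr}\mathbf{T})\varphi]$, and $\Delta_\psi\psi-\triangle\psi$ is a curvature-and-torsion term (Weitzenböck), while $d[(A-{\rm tr}\mathbf{T})\varphi]$ expands via $\nabla\varphi=\mathbf{T}\ast\psi$ into $A\ast\mathbf{T}\ast\psi+\nabla\mathbf{T}\ast\varphi+\mathbf{T}\ast\mathbf{T}\ast\psi$; multiplying by $\mathbf{T}$ gives terms that match the target. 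Finally $\nabla\mathbf{T}\ast\nabla\psi=\nabla\mathbf{T}\ast\mathbf{T}\ast\varphi$ by \eqref{torsion of 4-form}, which also appears in the statement. Collecting and discarding redundant contractions (using $\varphi\ast\varphi$-type contractions to reduce $\psi$ where needed, e.g.\ Lemma \ref{lemma2.1}) gives exactly the claimed list. The main obstacle will be the bookkeeping in the Weitzenböck step $\Delta_\psi\psi-\triangle\psi$ and making sure every term produced there is of one of the eight schematic types in the statement rather than something genuinely new (in particular verifying no $\nabla^2\mathbf{T}$ or $\nabla{\rm Rm}$ survives — these must cancel against pieces of $\triangle(\mathbf{T}\ast\psi)=\triangle\mathbf{T}\ast\psi+2\nabla\mathbf{T}\ast\nabla\psi+\mathbf{T}\ast\triangle\psi$, which is precisely why organizing the computation through the identity $\nabla\varphi=\mathbf{T}\ast\psi$ rather than through $\varphi$ directly is essential).
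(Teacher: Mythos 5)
You end up on essentially the same route the paper takes: anchor the computation to the identity $\nabla_{i}\varphi_{jkl}=g^{mn}\mathbf{T}_{im}\psi_{njkl}$ (after an initial detour through a more direct approach that you correctly abandon), push the heat operator onto $\mathbf{T}$ and $\psi$, feed in Lemma~\ref{lemma 3.1} for $(\partial_{t}-\triangle)\mathbf{T}$, and reduce derivatives of $\psi$ and $\varphi$ via $\nabla\psi=\mathbf{T}\ast\varphi$, $\nabla\varphi=\mathbf{T}\ast\psi$. Two places need tightening before this would close.

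First, your Leibniz formula
$(\partial_{t}-\triangle)(\mathbf{T}\ast\psi)=[(\partial_{t}-\triangle)\mathbf{T}]\ast\psi+\mathbf{T}\ast[(\partial_{t}-\triangle)\psi]-2\nabla\mathbf{T}\ast\nabla\psi$
is missing the contribution from the time-dependent metric hidden in the $\ast$-contraction. Since $\nabla g=0$, no extra term appears on the $\triangle$ side, but on the $\partial_{t}$ side one has $\partial_{t}(g^{mn}\mathbf{T}_{im}\psi_{njkl})=(\partial_{t}g^{mn})\mathbf{T}_{im}\psi_{njkl}+\cdots$. The paper lists this explicitly as the first of the three pieces; with $\partial_{t}g^{-1}\sim{\rm Rm}+\mathbf{T}\ast\mathbf{T}\ast\varphi\ast\varphi+A\ast\mathbf{T}+\mathbf{T}\ast\mathbf{T}$ it generates ${\rm Rm}\ast\mathbf{T}\ast\psi$, $\mathbf{T}\ast\mathbf{T}\ast\mathbf{T}\ast\varphi\ast\varphi\ast\psi$, $A\ast\mathbf{T}\ast\mathbf{T}\ast\psi$ and $\mathbf{T}\ast\mathbf{T}\ast\mathbf{T}\ast\psi$. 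These happen to be of the same schematic types as terms coming from Lemma~\ref{lemma 3.1}, so the statement is unaffected, but they must be tracked rather than omitted. Second, for $\partial_{t}\psi$ you propose writing $\Delta_{\psi}\psi+2d[(A-{\rm tr}\mathbf{T})\varphi]$ and then invoking a Weitzenb\"ock identity for $\Delta_{\psi}\psi-\triangle\psi$; you rightly flag the bookkeeping there as the main risk. The paper avoids this entirely by using Karigiannis's general variation formula
$\partial_{t}\psi_{njkl}=g^{pq}(h_{np}\psi_{qjkl}+\cdots)-X_{n}\varphi_{jkl}+\cdots$
with $h$ and $X$ given by \eqref{Ricci-like coflow}, which immediately yields $\partial_{t}\psi\sim{\rm Rm}\ast\psi+\mathbf{T}\ast\mathbf{T}\ast\varphi\ast\varphi\ast\psi+A\ast\mathbf{T}\ast\psi+\mathbf{T}\ast\mathbf{T}\ast\psi+\nabla\mathbf{T}\ast\varphi$ in the desired schematic form, with no need to manipulate $\Delta_{\psi}-\triangle$. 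You should replace your Weitzenb\"ock step with this; it is exactly the ``no genuinely new terms'' guarantee you were looking for, and it removes your stated obstacle.
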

\begin{proof}
    From $\eqref{torsion}$, we can calculate the evolution equation of $\nabla\varphi$ as follows:
    \begin{align}
    \label{3.8}
        \frac{\partial}{\partial t}\nabla_{i}\varphi_{jkl}&=\frac{\partial}{\partial t}\left(g^{mn}\mathbf{T}_{im}\psi_{njkl}\right)\\
        &=\left(\frac{\partial}{\partial t}g^{mn}\right)\mathbf{T}_{im}\psi_{njkl}+g^{mn}\left(\frac{\partial}{\partial t}\mathbf{T}_{im}\right)\psi_{njkl}+g^{mn}\mathbf{T}_{im}\left(\frac{\partial}{\partial t}\psi_{njkl}\right)\notag\\
        &={\rm Rm}\ast\mathbf{T}\ast\psi+\mathbf{T}\ast\mathbf{T}\ast\mathbf{T}\ast\varphi\ast\varphi\ast\psi+A\ast\mathbf{T}\ast\mathbf{T}\ast\psi\notag\\
        &\quad+\mathbf{T}\ast\mathbf{T}\ast\mathbf{T}\ast\psi+g^{mn}\left(\frac{\partial}{\partial t}\mathbf{T}_{im}\right)\psi_{njkl}+g^{mn}\mathbf{T}_{im}\left(\frac{\partial}{\partial t}\psi_{njkl}\right)\notag,
    \end{align}
    and
    \begin{align}
       \triangle(\nabla_{i}\varphi_{jkl})&=g^{pq}g^{mn}\nabla_{p}\nabla_{q}(\mathbf{T}_{im}\psi_{njkl})\\
       &=g^{pq}g^{mn}\nabla_{p}[\left(\nabla_{q}\mathbf{T}_{im}\right)\cdot\psi_{njkl}+\mathbf{T}_{im}(\nabla_{q}\psi_{njkl})]\notag\\
&=g^{mn}\triangle\mathbf{T}_{im}\cdot\psi_{njkl}+\nabla\mathbf{T}\ast\nabla\psi+\mathbf{T}\ast\nabla\mathbf{T}\ast\varphi+\mathbf{T}\ast\mathbf{T}\ast\nabla\varphi\notag.
    \end{align}
    For the last term in $\eqref{3.8}$, we need to compute
    \begin{align}
        \frac{\partial}{\partial t}\psi_{njkl}&=g^{pq}(h_{np}\psi_{qjkl}+h_{jp}\psi_{nqkl}+h_{kp}\psi_{njql}+h_{lp}\psi_{njkq})\notag\\
        &\quad-X_{n}\varphi_{jkl}+X_{j}\varphi_{nkl}-X_{k}\varphi_{njl}+X_{l}\varphi_{njk}\notag,
    \end{align}
    where $h$ and $X$ are defined in $\eqref{Ricci-like coflow}$. Then we will calculate $g^{pq}h_{np}\psi_{qjkl}-X_{n}\varphi_{jkl}$ as a sample(others are similar to it):
    \begin{align}
        &\quad g^{pq}h_{np}\psi_{qjkl}-X_{n}\varphi_{jkl}\notag\\
        &=g^{pq}\left(-R_{ij}+\frac{1}{2}\mathbf{T}^{km}\mathbf{T}^{ln}\varphi_{ikl}\varphi_{jmn}+(2A-{\rm tr}(\mathbf{T}))\mathbf{T}_{ij}\right)\psi_{qjkl}+\nabla_{n}{\rm tr}(\mathbf{T})\cdot\varphi_{jkl}\notag\\
        &={\rm Rm}\ast\psi+\mathbf{T}\ast\mathbf{T}\ast\varphi\ast\varphi\ast\psi+A\ast\mathbf{T}\ast\psi+\mathbf{T}\ast\mathbf{T}\ast\psi+\nabla\mathbf{T}\ast\varphi\notag.
    \end{align}
    Applying $\nabla\varphi=\mathbf{T}\ast\psi,\ \nabla\psi=\mathbf{T}\ast\varphi$ to the above formulas, and combining with Lemma \ref{lemma 3.1}, we get the evolution equation of $\nabla\varphi$.
\end{proof}

\begin{lemma}
    Under the modified Laplacian coflow $\eqref{modified Laplacian coflow}$, $\nabla\psi$ {satisfies the following evolution equation:}
    \begin{align}
        \left(\frac{\partial}{\partial t}-\triangle\right)\nabla\psi&={\rm Rm}\ast\mathbf{T}\ast\varphi+\mathbf{T}\ast\nabla\varphi\ast\nabla\varphi\ast\varphi+A\ast\mathbf{T}\ast\mathbf{T}\ast\varphi\notag\\  &\quad+\nabla\mathbf{T}\ast(\mathbf{T}\ast\psi+A\ast\varphi\ast\varphi+\mathbf{T}\ast\varphi\ast\varphi+\mathbf{T}\ast\varphi\ast\varphi\ast\varphi\ast\varphi)\notag\\
        &\quad+\mathbf{T}\ast\mathbf{T}\ast\mathbf{T}\ast(\varphi+\varphi\ast\varphi\ast\varphi)\notag.
    \end{align}
\end{lemma}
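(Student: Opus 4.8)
The statement to prove is the evolution equation for $\nabla\psi$ under the modified Laplacian coflow, which should run completely parallel to Lemma~\ref{lemma 3.3} for $\nabla\varphi$. The plan is to mimic that proof verbatim, replacing $\varphi$ by $\psi$ (and correspondingly $\psi$ by $\varphi$) throughout, using the torsion identity $\eqref{torsion of 4-form}$ in place of $\eqref{torsion}$.

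First I would write $\nabla_m\psi_{ijkl}$ via $\eqref{torsion of 4-form}$ as a sum of terms of the schematic form $\mathbf{T}\ast\varphi$, so that $\nabla\psi = \mathbf{T}\ast\varphi$ at the level of $\ast$-notation. Then I would differentiate in $t$: the time derivative of $\nabla_m\psi_{ijkl}$ splits, by the product rule, into a piece where $\partial_t$ hits the (inverse) metric contractions — producing ${\rm Rm}$, $\mathbf{T}\ast\mathbf{T}$, $A\ast\mathbf{T}$, ${\rm tr}(\mathbf{T})\ast\mathbf{T}$ type terms times $\varphi$ via $\eqref{Ricci-like coflow}$ — a piece where $\partial_t$ hits $\mathbf{T}_{mi}$, to which I apply Lemma~\ref{lemma 3.1} for $(\partial_t-\triangle)\mathbf{T}$, and a piece where $\partial_t$ hits $\varphi_{jkl}$, to which I apply the already-established evolution of $\varphi$ (equivalently $\partial_t\varphi_{jkl}$ expressed through $h$ and $X$ as recorded just before Lemma~\ref{lemma 3.1}, yielding ${\rm Rm}\ast\psi$, $\mathbf{T}\ast\mathbf{T}\ast\varphi\ast\varphi\ast\psi$, $A\ast\mathbf{T}\ast\psi$, $\mathbf{T}\ast\mathbf{T}\ast\psi$, $\nabla\mathbf{T}\ast\varphi$). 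Next I would compute $\triangle(\nabla_m\psi_{ijkl})=g^{pq}\nabla_p\nabla_q(\mathbf{T}_{mi}\ast\varphi_{jkl})$ by the Leibniz rule, which gives $g^{pq}\nabla_p\nabla_q\mathbf{T}\ast\varphi$ plus cross terms $\nabla\mathbf{T}\ast\nabla\varphi$ and $\mathbf{T}\ast\nabla\nabla\varphi$; using $\nabla\varphi=\mathbf{T}\ast\psi$ and $\nabla\psi=\mathbf{T}\ast\varphi$ these reduce to $\mathbf{T}\ast\nabla\mathbf{T}\ast\psi$ and $\mathbf{T}\ast\mathbf{T}\ast\mathbf{T}\ast\varphi$ type terms. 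Subtracting, the leading $g^{pq}\nabla_p\nabla_q\mathbf{T}\ast\varphi$ appears in both $\partial_t$ and $\triangle$ applied through Lemma~\ref{lemma 3.1}, so it cancels against itself and only the lower-order $\ast$-terms survive.

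The final step is bookkeeping: collect all the resulting schematic terms, repeatedly use $\nabla\varphi=\mathbf{T}\ast\psi$ and $\nabla\psi=\mathbf{T}\ast\varphi$ to rewrite any stray $\nabla\psi$ or $\nabla\varphi$ appearing inside products (but \emph{keeping} the explicit $\nabla\varphi\ast\nabla\varphi$ combination and $\nabla\mathbf{T}$ as in the statement), and absorb numerical constants and metric contractions into $\ast$. Since this is the $\psi$-analogue of Lemma~\ref{lemma 3.3}, every term in Lemma~\ref{lemma 3.3} carrying a trailing $\psi$ should reappear with that $\psi$ replaced by $\varphi$; e.g. the $\nabla\mathbf{T}\ast\varphi\ast\varphi\ast\varphi\ast\psi$ term becomes $\nabla\mathbf{T}\ast\varphi\ast\varphi\ast\varphi\ast\varphi$, etc. One has to check that the free-index structure (four lower indices on $\psi$) does not introduce a term absent from the $\varphi$ case; by Lemma~\ref{lemma2.1} all contractions of $\varphi$'s and $\psi$'s reduce to combinations of metrics, $\varphi$ and $\psi$, so nothing new appears.

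The main obstacle is purely organizational rather than conceptual: the four-index antisymmetrization in $\eqref{torsion of 4-form}$ produces four times as many terms as the three-index case, and one must be careful that the second covariant derivative $\nabla\nabla\varphi$ — when expanded via $\nabla\varphi=\mathbf{T}\ast\psi$ — contributes a $\nabla\mathbf{T}\ast\psi$ as well as a $\mathbf{T}\ast\mathbf{T}\ast\varphi$ term (through $\nabla\psi=\mathbf{T}\ast\varphi$), and that these land in the right slots of the final expression. The $\ast$-notation makes this manageable: one only needs to track which tensors ($\mathrm{Rm}$, $\mathbf{T}$, $\nabla\mathbf{T}$, $\varphi$, $\psi$) occur and with what multiplicity, not the precise index contractions, so the proof is a short paragraph citing Lemma~\ref{lemma 3.1}, the evolution of $\psi$, and $\eqref{torsion of 4-form}$, exactly as in the proof of Lemma~\ref{lemma 3.3}.
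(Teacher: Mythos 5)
Your approach is exactly the paper's: expand $\nabla_m\psi_{ijkl}$ via~\eqref{torsion of 4-form}, apply the product rule under $\partial_t$ and $\triangle$, insert Lemma~\ref{lemma 3.1} for $(\partial_t-\triangle)\mathbf{T}$ together with the $h,X$-formula for $\partial_t\varphi$, cancel the $\triangle\mathbf{T}\cdot\varphi$ piece, and reduce the remaining derivatives via $\nabla\varphi=\mathbf{T}\ast\psi$, $\nabla\psi=\mathbf{T}\ast\varphi$. Two small slips are worth flagging. First, unlike~\eqref{torsion}, the identity~\eqref{torsion of 4-form} for $\nabla\psi$ already has all indices lowered, so no extra term arises from $\partial_t$ hitting an inverse metric; that bookkeeping step belongs to the $\nabla\varphi$ case. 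Second, the schematic you quote for $\partial_t\varphi_{jkl}$, namely ${\rm Rm}\ast\psi+\mathbf{T}\ast\mathbf{T}\ast\varphi\ast\varphi\ast\psi+A\ast\mathbf{T}\ast\psi+\mathbf{T}\ast\mathbf{T}\ast\psi+\nabla\mathbf{T}\ast\varphi$, is actually the schematic for $\partial_t\psi$ (displayed before~\eqref{Ricci-like coflow}); the correct one, used in the paper, is
\begin{align}
\frac{\partial}{\partial t}\varphi_{jkl}&={\rm Rm}\ast\varphi+\mathbf{T}\ast\mathbf{T}\ast\varphi\ast\varphi\ast\varphi+A\ast\mathbf{T}\ast\varphi+\mathbf{T}\ast\mathbf{T}\ast\varphi+\nabla\mathbf{T}\ast\psi,\notag
\end{align}
which is what produces the trailing $\varphi$'s (e.g.\ ${\rm Rm}\ast\mathbf{T}\ast\varphi$ and $A\ast\mathbf{T}\ast\mathbf{T}\ast\varphi$) appearing in the statement of the lemma. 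With that correction in place, your argument reproduces the paper's proof.
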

\begin{proof}
    From $\eqref{torsion of 4-form}$, we can calculate the evolution equation of $\nabla\psi$ as following:
    \begin{align}
        \frac{\partial}{\partial t}\nabla_{m}\psi_{ijkl}&=-\frac{\partial}{\partial t}\mathbf{T}_{mi}\cdot\varphi_{jkl}+\frac{\partial}{\partial t}\mathbf{T}_{mj}\cdot\varphi_{ikl}+\frac{\partial}{\partial t}\mathbf{T}_{mk}\cdot\varphi_{jil}+\frac{\partial}{\partial t}\mathbf{T}_{ml}\cdot\varphi_{jki}\notag\\
        &\quad-\mathbf{T}_{mi}\cdot\frac{\partial}{\partial t}\varphi_{jkl}+\mathbf{T}_{mj}\cdot\frac{\partial}{\partial t}\varphi_{ikl}+\mathbf{T}_{mk}\cdot\frac{\partial}{\partial t}\varphi_{jil}+\mathbf{T}_{ml}\cdot\frac{\partial}{\partial t}\varphi_{jki}\notag.
    \end{align}
    Here we only calculate $\left(\frac{\partial}{\partial t}\mathbf{T}_{mi}\right)\cdot\varphi_{jkl}+\mathbf{T}_{mi}\left(\frac{\partial}{\partial t}\varphi_{jkl}\right)$ as {an example}(others are similar to it):
    \begin{align}
      \triangle(\mathbf{T}_{mi}\varphi_{jkl})&=g^{pq}\nabla_{p}\nabla_{q}(\mathbf{T}_{mi}\varphi_{jkl})\notag\\
      &=g^{pq}\nabla_{p}(\nabla_{q}\mathbf{T}_{mi}\cdot\varphi_{jkl}+\mathbf{T}_{mi}\nabla_{q}\varphi_{jkl})\notag\\
      &=\left(\triangle \mathbf{T}_{mi}\right)\cdot\varphi_{jkl}+\nabla\mathbf{T}\ast\nabla\varphi+ \mathbf{T}\ast\nabla^{2}\varphi\notag\\
      &=\left(\triangle \mathbf{T}_{mi}\right)\cdot\varphi_{jkl}+\nabla\mathbf{T}\ast\mathbf{T}\ast\psi+\mathbf{T}\ast\mathbf{T}\ast\nabla\psi.\notag
    \end{align}
    From \cite{Spiros flow of G2} and $\eqref{Ricci-like coflow}$, one gets 
    \begin{align}
        \frac{\partial}{\partial t}\varphi_{ijk}&=h_{jn}\varphi^{n}_{\ kl}-h_{kn}\varphi^{n}_{\ jl}+h_{ln}\varphi^{n}_{\ jk}+X^{n}\psi_{njkl}\notag\\
        &={\rm Rm}\ast\varphi+\mathbf{T}\ast\mathbf{T}\ast\varphi\ast\varphi\ast\varphi+A\ast\mathbf{T}\ast\varphi+\mathbf{T}\ast\mathbf{T}\ast\varphi+\nabla\mathbf{T}\ast\psi\notag.
    \end{align}
   Similarly, applying $\nabla\varphi=\mathbf{T}\ast\psi,\ \nabla\psi=\mathbf{T}\ast\varphi$ to the above formulas, and combining with Lemma \ref{lemma 3.1}, we get the evolution equation of $\nabla\psi$.
\end{proof}

\begin{lemma}\label{lemma 3.5}
    Under the modified Laplacian coflow $\eqref{modified Laplacian coflow}$, the evolution equation for the norm of full tensor $|\mathbf{T}|^{2}$ can be estimated as 
    \begin{align}
    \left(\frac{\partial}{\partial t}-\triangle\right)|\mathbf{T}|^{2}\leq C|{\rm Rm}||\mathbf{T}|^{2}+CA^{2}|\mathbf{T}|^{2}+CA|\mathbf{T}|^{3}+C|\mathbf{T}|^{4}.
    \end{align}
\end{lemma}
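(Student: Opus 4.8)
The plan is to differentiate $|\mathbf{T}|^{2}=g^{ik}g^{jl}\mathbf{T}_{ij}\mathbf{T}_{kl}$ directly, substitute the evolution equation of $\mathbf{T}$ from Lemma \ref{lemma 3.1} together with the metric evolution \eqref{Ricci-like coflow}, and then estimate each resulting monomial using the structural identities $\nabla\varphi=\mathbf{T}\ast\psi$, $\nabla\psi=\mathbf{T}\ast\varphi$ and the fact that the pointwise norms $|\varphi|$, $|\psi|$ are universal constants (so that any contraction against $\varphi$ or $\psi$ only produces a bounded multiplicative factor). Concretely, I would first record the Bochner-type identity
$$\left(\frac{\partial}{\partial t}-\triangle\right)|\mathbf{T}|^{2}=2\Big\langle\Big(\frac{\partial}{\partial t}-\triangle\Big)\mathbf{T},\,\mathbf{T}\Big\rangle-2|\nabla\mathbf{T}|^{2}+\Big(\frac{\partial}{\partial t}g^{-1}\Big)\ast\mathbf{T}\ast\mathbf{T},$$
in which the last term comes from differentiating the two inverse metrics in $|\mathbf{T}|^{2}$ and, by \eqref{Ricci-like coflow} together with ${\rm tr}(\mathbf{T})=7\ast\mathbf{T}$, expands into a sum of terms of the types ${\rm Rm}\ast\mathbf{T}\ast\mathbf{T}$, $\mathbf{T}\ast\mathbf{T}\ast\mathbf{T}\ast\mathbf{T}\ast\varphi\ast\varphi$, $A\ast\mathbf{T}\ast\mathbf{T}\ast\mathbf{T}$ and $\mathbf{T}\ast\mathbf{T}\ast\mathbf{T}\ast\mathbf{T}$, each of which is immediately bounded by $C|{\rm Rm}||\mathbf{T}|^{2}$, $CA|\mathbf{T}|^{3}$ or $C|\mathbf{T}|^{4}$.

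Next I would plug the expression for $\big(\frac{\partial}{\partial t}-\triangle\big)\mathbf{T}$ from Lemma \ref{lemma 3.1} into the inner product $2\langle\,\cdot\,,\mathbf{T}\rangle$. Contracting each summand against $\mathbf{T}$ and replacing every $\nabla\varphi$ by $\mathbf{T}\ast\psi$, one obtains a finite list of scalar bounds: ${\rm Rm}\ast\mathbf{T}\ast\mathbf{T}$ contributes $C|{\rm Rm}||\mathbf{T}|^{2}$; the cubic-torsion terms $\mathbf{T}\ast\mathbf{T}\ast\mathbf{T}$, $\mathbf{T}\ast\mathbf{T}\ast\mathbf{T}\ast\varphi\ast\varphi$ and $\mathbf{T}\ast\nabla\varphi\ast\nabla\varphi=\mathbf{T}\ast\mathbf{T}\ast\mathbf{T}\ast\psi\ast\psi$ all contribute $C|\mathbf{T}|^{4}$ after contraction with $\mathbf{T}$; the term $A\ast\mathbf{T}\ast\mathbf{T}$ contributes $CA|\mathbf{T}|^{3}$. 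The only summands producing a factor $|\nabla\mathbf{T}|$ are $A\ast\nabla\mathbf{T}\ast\varphi$, $\nabla\mathbf{T}\ast\mathbf{T}\ast\varphi$ and $\nabla\mathbf{T}\ast\mathbf{T}\ast\varphi\ast\varphi\ast\varphi$, which after contraction with $\mathbf{T}$ are bounded pointwise by $CA|\nabla\mathbf{T}||\mathbf{T}|$ and $C|\nabla\mathbf{T}||\mathbf{T}|^{2}$, respectively.

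For those terms I would apply Young's inequality, $CA|\nabla\mathbf{T}||\mathbf{T}|\le\frac{1}{4}|\nabla\mathbf{T}|^{2}+CA^{2}|\mathbf{T}|^{2}$ and $C|\nabla\mathbf{T}||\mathbf{T}|^{2}\le\frac{1}{4}|\nabla\mathbf{T}|^{2}+C|\mathbf{T}|^{4}$; summing these, the total $|\nabla\mathbf{T}|^{2}$ generated is at most $|\nabla\mathbf{T}|^{2}$, which is absorbed by the good term $-2|\nabla\mathbf{T}|^{2}$ and discarded. Collecting everything that remains yields precisely $C|{\rm Rm}||\mathbf{T}|^{2}+CA^{2}|\mathbf{T}|^{2}+CA|\mathbf{T}|^{3}+C|\mathbf{T}|^{4}$. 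The one point requiring care — and the main, if minor, obstacle — is the bookkeeping of the $\nabla\mathbf{T}$ terms together with the $\partial_{t}g^{-1}$ contributions: one must check that the reabsorbed gradient term never exceeds the available $2|\nabla\mathbf{T}|^{2}$, and that no stray term of order $|\mathbf{T}|^{2}$ (without an $A^{2}$ or ${\rm Rm}$ prefactor) or of order $A|\mathbf{T}|^{2}$ survives. Both facts follow from observing that every monomial in Lemma \ref{lemma 3.1} already carries either at least two powers of $\mathbf{T}$, or a factor $A$, or a factor ${\rm Rm}$, so that after contraction with $\mathbf{T}$ and a single Young step the homogeneities match the claimed right-hand side exactly.
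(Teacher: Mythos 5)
Your proposal is correct and follows essentially the same path as the paper's own proof: write the Bochner-type identity for $|\mathbf{T}|^{2}$ picking up the $-2|\nabla\mathbf{T}|^{2}$ and the $(\partial_{t}g^{-1})\ast\mathbf{T}\ast\mathbf{T}$ contributions, substitute Lemma \ref{lemma 3.1} and \eqref{Ricci-like coflow}, rewrite $\nabla\varphi=\mathbf{T}\ast\psi$ and treat $|\varphi|,|\psi|$ as universal constants, and then absorb the two families of $|\nabla\mathbf{T}|$-bearing terms (one weighted by $A$, one by $|\mathbf{T}|$) via Young's inequality into $-2|\nabla\mathbf{T}|^{2}$. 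The only cosmetic difference is that the paper labels this last step ``Cauchy--Schwarz'' while you more precisely call it Young's inequality; your extra bookkeeping remark about not exceeding $2|\nabla\mathbf{T}|^{2}$ is harmless but unnecessary, since the Peter--Paul form $ab\le\epsilon a^{2}+\tfrac{1}{4\epsilon}b^{2}$ lets one make the absorbed gradient coefficient as small as desired at the cost of enlarging $C$.
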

\begin{proof}
    According to Lemma \ref{lemma 3.1}, we have 
    \begin{align}
      \left(\frac{\partial}{\partial t}-\triangle\right)|\mathbf{T}|^{2}&=2\left(\frac{\partial}{\partial t}-\triangle\right)\mathbf{T}_{ij}\cdot\mathbf{T}^{ij}-2|\nabla\mathbf{T}|^{2}+2\left(\frac{\partial}{\partial t}g^{ij}\right)\mathbf{T}_{im}\mathbf{T}_{j}^{ \ m}\notag\\
      &=-2|\nabla\mathbf{T}|^{2}+{\rm Rm}\ast\mathbf{T}\ast\mathbf{T}+A\ast\nabla\mathbf{T}\ast\mathbf{T}\ast\varphi+\nabla\mathbf{T}\ast\mathbf{T}\ast\mathbf{T}\ast\varphi\notag\\
      &\quad+\nabla\mathbf{T}\ast\mathbf{T}\ast\mathbf{T}\ast\varphi\ast\varphi\ast\varphi+\mathbf{T}\ast\mathbf{T}\ast\nabla\varphi\ast\nabla\varphi\notag\\ &\quad+\mathbf{T}\ast\mathbf{T}\ast\mathbf{T}\ast\mathbf{T}+A\ast\mathbf{T}\ast\mathbf{T}\ast\mathbf{T}+\mathbf{T}\ast\mathbf{T}\ast\mathbf{T}\ast\mathbf{T}\ast\varphi\ast\varphi\notag\\
      &\leq -2|\nabla\mathbf{T}|^{2}+C|\nabla\mathbf{T}|\left(A|\mathbf{T}|+|\mathbf{T}|^{2}\right)+C|{\rm Rm}||\mathbf{T}|^{2}\notag\\
      &\quad+CA|\mathbf{T}|^{3}+C|\mathbf{T}|^{4}\notag\\
      &\leq C|{\rm Rm}||\mathbf{T}|^{2}+CA^{2}|\mathbf{T}|^{2}+CA|\mathbf{T}|^{3}+C|\mathbf{T}|^{4}\notag.
    \end{align}
    where the last inequality {uses the} Cauchy-Schwarz inequality. 
\end{proof}

\begin{lemma}\label{lemma 3.6}
    There exists a constant $C$, such that for any $p$-tensor $S$ with $p\geq 1$ and any integer $k$, we have the estimate
    \begin{align}
        |\nabla^{k}\triangle S-\triangle\nabla^{k}S|\leq C(p+1)\sum_{i=0}^{k}\frac{(k+2)!}{(i+2)!(k-i)!}|\nabla^{i}{\rm Rm}||\nabla^{k-i}S|.
    \end{align}
\end{lemma}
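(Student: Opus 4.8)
The plan is to prove this by induction on $k$, starting from the standard Ricci-identity commutator for a single derivative and iterating, while carefully tracking the combinatorial coefficient at each stage. First I would record the base case: for a $p$-tensor $S$, commuting one covariant derivative past the Laplacian produces schematically $\nabla\Delta S - \Delta\nabla S = \mathrm{Rm}\ast\nabla S + \nabla\mathrm{Rm}\ast S$, where the number of contraction terms is controlled by a universal multiple of $p$ (each index slot of $S$ contributes one curvature term via the Ricci identity, plus one term from differentiating the curvature that appears when commuting). This gives the $k=1$ case with the claimed constant $C(p+1)$ and the coefficient $\tfrac{3!}{2!\,1!} + \tfrac{3!}{3!\,0!} = 3+1 = 4$ absorbed into $C$.

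For the inductive step, I would write $\nabla^{k+1}\Delta S - \Delta\nabla^{k+1}S = \nabla(\nabla^{k}\Delta S - \Delta\nabla^{k}S) + (\nabla\Delta(\nabla^{k}S) - \Delta\nabla(\nabla^{k}S))$. The first bracket is handled by applying $\nabla$ to the inductive hypothesis and distributing the derivative over each product $|\nabla^{i}\mathrm{Rm}||\nabla^{k-i}S|$ via the Leibniz rule; the second bracket is the $k=1$ commutator applied to the $(p+k)$-tensor $\nabla^{k}S$, contributing terms $\mathrm{Rm}\ast\nabla^{k+1}S$ and $\nabla\mathrm{Rm}\ast\nabla^{k}S$ with constant proportional to $(p+k+1)$. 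The main work is then the bookkeeping: after collecting all terms of the form $|\nabla^{i}\mathrm{Rm}||\nabla^{k+1-i}S|$, one must check that the sum of the coefficients inherited from the inductive hypothesis (shifted by Leibniz) plus the new boundary terms is bounded by $C(p+1)\tfrac{(k+3)!}{(i+2)!\,(k+1-i)!}$. This reduces to the Pascal-type identity
\begin{align}
\frac{(k+2)!}{(i+2)!\,(k-i)!}+\frac{(k+2)!}{(i+1)!\,(k+1-i)!}=\frac{(k+3)!}{(i+2)!\,(k+1-i)!},\notag
\end{align}
together with the observation that the factor $(p+k+1)$ from the boundary terms is dominated by $(p+1)(k+2)$, which is exactly the growth built into the factorial coefficient when $i=k$ or $i=0$.

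The step I expect to be the genuine obstacle is not any single algebraic manipulation but getting the coefficient accounting airtight: when $\nabla$ hits a product in the inductive hypothesis it splits into two terms with indices $(i,k-i)\mapsto(i+1,k-i)$ and $(i,k-i+1)$, so a fixed target pair $(j,k+1-j)$ receives contributions from two different source pairs, and one must verify that the combined coefficient still collapses via the binomial identity above rather than merely being bounded by it (a mere bound would lose the sharp factorial form needed downstream for the analyticity argument in Theorem 1.1). I would therefore organize the induction so that the hypothesis is the \emph{exact} schematic identity with the stated coefficient as an upper bound, and close the loop by checking the two edge cases $i=0$ and $i=k+1$ separately, since those are where the extra curvature-derivative term $\nabla\mathrm{Rm}\ast S$ and the leading term $\mathrm{Rm}\ast\nabla^{k+1}S$ land and where the $(p+1)$ versus $(p+k+1)$ discrepancy must be absorbed.
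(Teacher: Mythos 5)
The paper does not actually prove this lemma; its proof is a one-line citation to Lemma~2 of Bando (1987), so you are reconstructing an argument the paper itself omits. Your plan (induction on $k$, base case from the Ricci identity, inductive step via $[\nabla^{k+1},\Delta]S=\nabla([\nabla^{k},\Delta]S)+[\nabla,\Delta](\nabla^{k}S)$, coefficients tracked through Leibniz and Pascal) is a sensible outline, but the coefficient accounting has a genuine gap, and you have also misidentified where the trouble lies.

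Write $c_{i}(k)$ for the (absolute) coefficient of the $\nabla^{i}{\rm Rm}\ast\nabla^{k-i}S$ terms in $[\nabla^{k},\Delta]S$. Your decomposition yields $c_{j}(k+1)\leq c_{j-1}(k)+c_{j}(k)$ for $2\leq j\leq k+1$, together with boundary corrections $c_{0}(k+1)\leq c_{0}(k)+\bigl(2(p+k)+1\bigr)$ and $c_{1}(k+1)\leq c_{0}(k)+c_{1}(k)+(p+k)$: the new terms from $[\nabla,\Delta](\nabla^{k}S)={\rm Rm}\ast\nabla^{k+1}S+\nabla{\rm Rm}\ast\nabla^{k}S$ land at $i=0$ and $i=1$, \emph{not} at $i=k+1$ as you assert (the top index $i=k+1$ receives only $c_{k}(k)$ from Leibniz, with nothing extra). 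At $i=0$ there is room: $\binom{k+3}{2}-\binom{k+2}{2}=k+2$ and $2(p+k)+1\leq (p+1)(k+2)$ for $p\geq 1$. But at $i=1$ your Pascal identity $\binom{k+2}{2}+\binom{k+2}{3}=\binom{k+3}{3}$ is an \emph{equality}, so the inductive bounds on $c_{0}(k)$ and $c_{1}(k)$ already exhaust the target budget $C(p+1)\binom{k+3}{3}$, leaving nothing to absorb the extra $p+k$. Enlarging the constant $C$ does not help, since it scales both sides of the Pascal identity equally; your proposed resolution (dominating $(p+k+1)$ by $(p+1)(k+2)$ at $i=0$ and $i=k$) is simply silent on the genuinely problematic index $i=1$. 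A repair is to strengthen the inductive hypothesis so that it records the slack in $c_{0}(k)$, or — cleaner — to bypass the induction entirely via the telescoping identity $[\nabla^{k},\Delta]=\sum_{j=0}^{k-1}\nabla^{j}\,[\nabla,\Delta]\,\nabla^{k-1-j}$. Expanding each $[\nabla,\Delta](\nabla^{k-1-j}S)$ with the $k=1$ formula, applying Leibniz to the outer $\nabla^{j}$, and summing with the hockey-stick identities $\sum_{j\geq i}\binom{j}{i}=\binom{k}{i+1}$ and $\sum_{j\geq i}\binom{j}{i}(k-1-j)=\binom{k}{i+2}$ gives the closed form $c_{i}(k)\leq p\bigl[\binom{k}{i}+2\binom{k}{i+1}\bigr]+2\binom{k}{i+1}+2\binom{k}{i+2}$, and $(p+1)\binom{k+2}{i+2}-c_{i}(k)=(p-1)\binom{k}{i+2}+\binom{k}{i}\geq 0$ precisely because $p\geq 1$. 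That computation closes the lemma without relying on a too-tight Pascal recursion.
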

\begin{proof}
    This Lemma is Lemma 2 in \cite{Bando 1987}.
\end{proof}

\begin{lemma}\label{lemma 3.7}
    Under the modified Laplacian coflow $\eqref{modified Laplacian coflow}$, there exists a constant $C$, such that for any $p$-tensor $S$ with $p\geq 1$ and any integer $k$, we have the estimate 
    \begin{align}
        \left|\nabla^{k}\frac{\partial}{\partial t}S-\frac{\partial}{\partial t}\nabla^{k}S\right|&\leq C(p+1)\sum_{i=1}^{k}\frac{(k+1)!}{(i+1)!(k-i)!}|\nabla^{i}{\rm Rm}||\nabla^{k-i}S|\notag\\
        &\quad+C(p+1)\sum_{i=1}^{k}\frac{(k+1)!}{(i+1)!(k-i)!}|\nabla^{i}(\mathbf{T}\ast\mathbf{T}\ast\varphi\ast\varphi)||\nabla^{k-i}S|\notag\\
        &\quad+C(p+1)\sum_{i=1}^{k}\frac{(k+1)!}{(i+1)!(k-i)!}A|\nabla^{i}\mathbf{T}||\nabla^{k-i}S|\notag\\
        &\quad+C(p+1)\sum_{i=1}^{k}\frac{(k+1)!}{(i+1)!(k-i)!}|\nabla^{i}(\mathbf{T}\ast\mathbf{T})||\nabla^{k-i}S|\notag.
    \end{align}
\end{lemma}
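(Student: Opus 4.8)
The plan is to first compute the time-variation of the Levi-Civita connection under the flow, then reduce the commutator $\nabla^{k}\partial_{t}S-\partial_{t}\nabla^{k}S$ to a weighted sum of contractions of $\nabla^{j}(\partial_{t}\Gamma)$ with $\nabla^{k-1-j}S$, and finally insert the schematic form of $\partial_{t}\Gamma$.

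\emph{Step 1 (variation of the connection).} Since $\partial_{t}g_{ij}=2h_{ij}$ by \eqref{Ricci-like coflow}, the first-variation formula for Christoffel symbols gives $\partial_{t}\Gamma_{ij}^{l}=g^{lm}(\nabla_{i}h_{jm}+\nabla_{j}h_{im}-\nabla_{m}h_{ij})$, a globally defined $3$-tensor of the schematic type $\nabla h$. Using $h_{ij}=-R_{ij}+\frac{1}{2}\mathbf{T}^{km}\mathbf{T}^{ln}\varphi_{ikl}\varphi_{jmn}+(2A-{\rm tr}(\mathbf{T}))\mathbf{T}_{ij}$ together with ${\rm Ric}=7\ast{\rm Rm}$ and ${\rm tr}(\mathbf{T})=7\ast\mathbf{T}$ (so that ${\rm tr}(\mathbf{T})\mathbf{T}=\mathbf{T}\ast\mathbf{T}$), one writes $h$ schematically as ${\rm Rm}+\mathbf{T}\ast\mathbf{T}\ast\varphi\ast\varphi+A\ast\mathbf{T}+\mathbf{T}\ast\mathbf{T}$, so that
$$\partial_{t}\Gamma=\nabla{\rm Rm}+\nabla(\mathbf{T}\ast\mathbf{T}\ast\varphi\ast\varphi)+A\,\nabla\mathbf{T}+\nabla(\mathbf{T}\ast\mathbf{T}).$$
It is important to keep the products $\mathbf{T}\ast\mathbf{T}\ast\varphi\ast\varphi$ and $\mathbf{T}\ast\mathbf{T}$ unexpanded (not distributed by the Leibniz rule), so that the four families of terms in the conclusion survive as the single quantities $\nabla^{i}{\rm Rm}$, $\nabla^{i}(\mathbf{T}\ast\mathbf{T}\ast\varphi\ast\varphi)$, $A\nabla^{i}\mathbf{T}$, $\nabla^{i}(\mathbf{T}\ast\mathbf{T})$.

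\emph{Step 2 (iterating the commutator).} For any tensor $U$, differentiating the coordinate expression of $\nabla U$ in $t$ and inserting $\partial_{t}\Gamma$ gives $\partial_{t}\nabla U-\nabla\partial_{t}U=(\partial_{t}\Gamma)\ast U$, where the right-hand side is a sum of as many terms as $U$ has indices, each a contraction of $\partial_{t}\Gamma$ against one slot of $U$. Combined with the recursion $\nabla^{k}\partial_{t}S-\partial_{t}\nabla^{k}S=\nabla\big(\nabla^{k-1}\partial_{t}S-\partial_{t}\nabla^{k-1}S\big)+(\partial_{t}\Gamma)\ast\nabla^{k-1}S$ and the Leibniz rule, this yields
$$\nabla^{k}\partial_{t}S-\partial_{t}\nabla^{k}S=\sum_{j=0}^{k-1}c^{k}_{j}\,\nabla^{j}(\partial_{t}\Gamma)\ast\nabla^{k-1-j}S$$
for suitable integers $c^{k}_{j}$. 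An induction on $k$ via Pascal's rule — tracking the binomial factor produced by distributing the covariant derivatives and the multiplicity $p+(k-1-j)$ counting the slots of $\nabla^{k-1-j}S$ available for contraction with $\partial_{t}\Gamma$ — shows $c^{k}_{j}\leq C(p+1)\binom{k+1}{j+2}=C(p+1)\frac{(k+1)!}{(j+2)!(k-1-j)!}$. This is the same bookkeeping as in Lemma \ref{lemma 3.6} (Bando's Lemma 2), which I would mirror.

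\emph{Step 3 (assembling the estimate).} Setting $i=j+1$, so that $\nabla^{j}(\partial_{t}\Gamma)$ is $\nabla^{i}$ applied to the four summands of $h$ from Step 1 and $\nabla^{k-1-j}S=\nabla^{k-i}S$, I would substitute the formula for $\partial_{t}\Gamma$, apply the triangle inequality and the submultiplicativity $|P\ast Q|\leq C|P|\,|Q|$ of the tensor norm under $\ast$, and collect the four resulting sums; the index range becomes $1\leq i\leq k$ and the coefficient $C(p+1)\frac{(k+1)!}{(i+1)!(k-i)!}$, as claimed. I expect the main obstacle to be the combinatorial step in Step 2, namely verifying that the coefficients generated by the iteration are dominated uniformly in $i$ and $k$ by $C(p+1)\frac{(k+1)!}{(i+1)!(k-i)!}$; the geometric input (the formula for $\partial_{t}\Gamma$ and the grouping of terms) and the remaining algebra are routine.
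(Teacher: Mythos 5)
Your proposal is correct and follows essentially the same route as the paper: the paper cites the general commutator estimate (in terms of $|\nabla^{i}h|$) from Lemma 13.26 of \cite{Ricci flow TA 2} and then substitutes the schematic form of $h$ with $|{\rm Ric}|\leq 7|{\rm Rm}|$ and ${\rm tr}(\mathbf{T})\mathbf{T}=\mathbf{T}\ast\mathbf{T}$, exactly as in your Step~3. The only difference is that you unfold the citation by rederiving that general estimate via $\partial_{t}\Gamma=\nabla h$ and a Pascal-rule induction, which is precisely the bookkeeping behind the quoted lemma.
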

\begin{proof}
    As the same discussion of Lemma 13.26 in \cite{Ricci flow TA 2}, for the geometric flow $\eqref{geometric flow}$, the commutator formula for $\nabla^{k}$ and $\frac{\partial}{\partial t}$ should be
    \begin{align}
    \label{3.14}
        \left|\nabla^{k}\frac{\partial}{\partial t}S-\frac{\partial}{\partial t}\nabla^{k}S\right|&\leq C(p+1)\sum_{i=1}^{k}\frac{(k+1)!}{(i+1)!(k-i)!}|\nabla^{i}h||\nabla^{k-i}S|.
    \end{align}
    Under the modified Laplacian coflow $\eqref{Ricci-like coflow}$, where
    $$h_{ij}=-R_{ij}+\frac{1}{2}\mathbf{T}^{km}\mathbf{T}^{ln}\varphi_{ikl}\varphi_{jmn}+(2A-{\rm tr}(\mathbf{T}))\mathbf{T}_{ij},$$
    by substituting it {into} $\eqref{3.14}$ and noting that $|{\rm Ric}|\leq 7|{\rm Rm}|$ holds,  we get the estimate of the commutator formula for $\nabla^{k}$ and $\frac{\partial}{\partial t}$  under the modified Laplacian coflow.
\end{proof}

Combining Lemmas \ref{lemma 3.6} and \ref{lemma 3.7}, one gets the following estimate:
\begin{prop}\label{prop 3.8}
    Under the modified Laplacian coflow $\eqref{modified Laplacian coflow}$, there exists a constant $C$, such that for any $p$-tensor $S$ with $p\geq 1$ and any integer $k$, we have the estimate 
    \begin{align}
        &\quad\left|\left(\frac{\partial}{\partial t}-\triangle\right)\nabla^{k}S-\nabla^{k}\left(\frac{\partial}{\partial t}-\triangle\right)S\right|\\
        &\leq C(p+1)\sum_{i=0}^{k}\frac{(k+2)!}{(i+2)!(k-i)!}|\nabla^{i}{\rm Rm}||\nabla^{k-i}S|\notag\\
        &\quad+C(p+1)\sum_{i=1}^{k}\frac{(k+1)!}{(i+1)!(k-i)!}|\nabla^{i}(\mathbf{T}\ast\mathbf{T}\ast\varphi\ast\varphi)||\nabla^{k-i}S|\notag\\
        &\quad+C(p+1)\sum_{i=1}^{k}\frac{(k+1)!}{(i+1)!(k-i)!}A|\nabla^{i}\mathbf{T}||\nabla^{k-i}S|\notag\\
        &\quad+C(p+1)\sum_{i=1}^{k}\frac{(k+1)!}{(i+1)!(k-i)!}|\nabla^{i}(\mathbf{T}\ast\mathbf{T})||\nabla^{k-i}S|\notag.
    \end{align}
\end{prop}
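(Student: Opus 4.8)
The plan is to split the combined commutator into its time part and its Laplacian part, bound each separately by the two preceding lemmas, and reconcile the slightly different binomial weights that appear.

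I would begin with the identity
$$\left(\frac{\partial}{\partial t}-\triangle\right)\nabla^{k}S-\nabla^{k}\left(\frac{\partial}{\partial t}-\triangle\right)S=\left(\frac{\partial}{\partial t}\nabla^{k}S-\nabla^{k}\frac{\partial}{\partial t}S\right)-\left(\triangle\nabla^{k}S-\nabla^{k}\triangle S\right),$$
and apply the triangle inequality. The term $\triangle\nabla^{k}S-\nabla^{k}\triangle S$ is estimated directly by Lemma \ref{lemma 3.6}; this reproduces verbatim the first sum on the right-hand side of the proposition, namely $C(p+1)\sum_{i=0}^{k}\frac{(k+2)!}{(i+2)!(k-i)!}|\nabla^{i}{\rm Rm}||\nabla^{k-i}S|$. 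The term $\frac{\partial}{\partial t}\nabla^{k}S-\nabla^{k}\frac{\partial}{\partial t}S$ is estimated by Lemma \ref{lemma 3.7}, whose bound already contains the last three sums of the proposition exactly as stated, together with one additional sum $C(p+1)\sum_{i=1}^{k}\frac{(k+1)!}{(i+1)!(k-i)!}|\nabla^{i}{\rm Rm}||\nabla^{k-i}S|$ arising from the Ricci piece of $h_{ij}$ (here one has already used $|{\rm Ric}|\leq 7|{\rm Rm}|$ in the proof of that lemma).

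The only thing left to verify is that this additional curvature sum is dominated by the first sum of the proposition, so that no new term is produced. This is the elementary inequality
$$\frac{(k+1)!}{(i+1)!(k-i)!}\leq\frac{(k+2)!}{(i+2)!(k-i)!}\qquad(0\leq i\leq k),$$
which is equivalent to $i+2\leq k+2$ and hence holds for every admissible $i$; it lets us bound the $(k+1)!$-weighted curvature sum over $1\leq i\leq k$ term-by-term by the $(k+2)!$-weighted curvature sum over $0\leq i\leq k$. Adding the two contributions and, if necessary, enlarging the constant $C$ to absorb the factor coming from the triangle inequality, we obtain the claimed estimate. I do not expect any genuine difficulty here: the analytic content is entirely carried by Lemmas \ref{lemma 3.6} and \ref{lemma 3.7}, and the proof of the proposition is simply a matter of assembling them and checking the bookkeeping of the binomial coefficients.
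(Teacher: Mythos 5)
Your proposal is correct and takes essentially the same approach as the paper, which simply states ``Combining Lemmas \ref{lemma 3.6} and \ref{lemma 3.7}, one gets the following estimate'' without further elaboration. Your explicit verification that the extra $(k+1)!$-weighted curvature sum from Lemma \ref{lemma 3.7} is dominated term-by-term by the $(k+2)!$-weighted curvature sum (via $i+2\leq k+2$) is exactly the bookkeeping the paper leaves implicit.
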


\begin{remark}
   The uniform constant $C$, appearing above, depends only on  the dimension $7$ of $M$.
\end{remark}

\section{Main derivative estimate}\label{section4}

In this section, we will present the main derivative estimate, which we also refer to as the improvement of the Shi-type estimate for the modified Laplacian coflow \eqref{modified Laplacian coflow}. Before deriving it, we introduce the following notions for the sake of convenience.

For $k\geq 0$, we define
\begin{align}
    a_{k}=\frac{t^{\frac{k}{2}}|\nabla^{k}{\rm Rm}|}{(k+1)!},b_{k}=\frac{t^{\frac{k}{2}}|\nabla^{k+1}\mathbf{T}|}{(k+1)!},c_{k}=\frac{t^{\frac{k}{2}}|\nabla^{k+2}\varphi|}{(k+1)!},d_{k}=\frac{t^{\frac{k}{2}}|\nabla^{k+2}\psi|}{(k+1)!}.
\end{align}
For $k\geq 1$, we define
\begin{align}\label{def 1.1}
    \tilde{a}_{k}=\frac{t^{\frac{k-1}{2}}|\nabla^{k}{\rm Rm}|}{k!},\tilde{b}_{k}=\frac{t^{\frac{k-1}{2}}|\nabla^{k+1}\mathbf{T}|}{k!},\tilde{c}_{k}=\frac{t^{\frac{k-1}{2}}|\nabla^{k+2}\varphi|}{k!},\tilde{d}_{k}=\frac{t^{\frac{k-1}{2}}|\nabla^{k+2}\psi|}{k!}.
\end{align}
By setting $k!=1$ for all $k\leq 0$, we define
\begin{align}
    \tilde{a}_{0}&=t^{-\frac{1}{2}}|{\rm Rm}|, \quad \tilde{b}_{0}=t^{-\frac{1}{2}}|\nabla\mathbf{T}|,\ \quad\tilde{c}_{0}=t^{-\frac{1}{2}}|\nabla^{2}\varphi|,\quad\tilde{d}_{0}=t^{-\frac{1}{2}}|\nabla^{2}\psi|\notag,\\
    b_{-1}&=t^{-\frac{1}{2}}|\mathbf{T}|,  \quad c_{-1}=t^{-\frac{1}{2}}|\nabla\varphi|,\quad d_{-1}=t^{-\frac{1}{2}}|\nabla\psi|,\notag\\
    \tilde{b}_{-1}&=t^{-1}|\mathbf{T}|,  \quad \tilde{c}_{-1}=t^{-1}|\nabla\varphi|,\quad \ \tilde{d}_{-1}=t^{-1}|\nabla\psi|,\notag\\
     c_{-2}&=t^{-1}|\varphi|,\  \quad d_{-2}=t^{-1}|\psi|,\ \ \  \quad \tilde{c}_{-2}=t^{-\frac{3}{2}}|\varphi|,\quad\ \ \tilde{d}_{-2}=t^{-\frac{3}{2}}|\psi|\notag.
\end{align}

Next, we define
\begin{align}
    \mathbf{A}_{N}=\sum_{k=0}^{N}a_{k}^{2},\quad \mathbf{B}_{N}=\sum_{k=0}^{N}b_{k}^{2},\quad \mathbf{C}_{N}=\sum_{k=0}^{N}c_{k}^{2},\quad \mathbf{D}_{N}=\sum_{k=0}^{N}d_{k}^{2},\\
    \tilde{\mathbf{A}}_{N}=\sum_{k=1}^{N}\tilde{a}_{k}^{2},\quad \tilde{\mathbf{B}}_{N}=\sum_{k=1}^{N}\tilde{b}_{k}^{2},\quad \tilde{\mathbf{C}}_{N}=\sum_{k=1}^{N}\tilde{c}_{k}^{2},\quad \tilde{\mathbf{D}}_{N}=\sum_{k=1}^{N}{\tilde{d}}_{k}^{2},
\end{align}
and 
\begin{align} \Phi_{N}&=\mathbf{A}_{N}+\mathbf{B}_{N}+\mathbf{C}_{N}+\mathbf{D}_{N}+|\mathbf{T}|^{2}+A^{2}+|\varphi|^{2}+|\psi|^{2},\notag\\
\Psi_{N}&=\tilde{\mathbf{A}}_{N}+\tilde{\mathbf{B}}_{N}+\tilde{\mathbf{C}}_{N}+\tilde{\mathbf{D}}_{N}\notag.
\end{align}

\begin{remark}
    Here $\Phi_{N}$ is {different from} Lotay and Wei's paper \cite{L-W 2019 2}, we add the norm of torsion $|\mathbf{T}|^{2}$ and some constants, where $|\varphi|^{2}=42,|\psi|^{2}=168$, {the norm $|\cdot|$ is defined in $\eqref{norm}$.}
\end{remark}

\begin{lemma}
    Let $(M,\psi(t))_{t\in[0,T]}$ be a solution of the modified Laplacian coflow $\eqref{modified Laplacian coflow}$. For any nonnegative integers $x,y,z,w$, we define
    \begin{align}
    P(x,y,z,w)&=t^{\frac{x+y+2z+2w}{2}}\left(\mathbf{B}_{N}+b_{-1}^{2}\right)^{\frac{x}{2}}\left(\mathbf{C}_{N}+c_{-1}^{2}\right)^{\frac{y}{2}}\left(\mathbf{C}_{N}+c_{-1}^{2}+c_{-2}^{2}\right)^{\frac{z}{2}}\\
    &\quad\cdot\left(\mathbf{D}_{N}+d_{-1}^{2}+d_{-2}^{2}\right)^{\frac{w}{2}}
    -t^{\frac{x+y+2z+2w}{2}}b_{-1}^{x}c_{-1}^{y}c_{-2}^{z}d_{-2}^{w},\notag
    \end{align}
    then we get 
    $$P(x,y,z,w)\leq C\sum_{i=1}^{x+y+2z+2w}t^{\frac{i}{2}}\Phi_{N}^{\frac{x+y+z+w}{2}},$$
    where $C$ is a positive constant depends only on $x,y,z,w$.
\end{lemma}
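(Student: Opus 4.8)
The plan is to absorb the explicit powers of $t$ in $P(x,y,z,w)$ into the four bracketed quantities, turning $P$ into a difference of two products of nonnegative reals in which the second product is termwise dominated by the first, and then to apply the elementary telescoping identity
\[
\prod_{j=1}^{n}\alpha_{j}-\prod_{j=1}^{n}\beta_{j}=\sum_{j=1}^{n}\beta_{1}\cdots\beta_{j-1}(\alpha_{j}-\beta_{j})\,\alpha_{j+1}\cdots\alpha_{n},\qquad \alpha_{j}\ge\beta_{j}\ge0.
\]
Using $t\,b_{-1}^{2}=|\mathbf{T}|^{2}$, $t\,c_{-1}^{2}=|\nabla\varphi|^{2}$, $t^{2}c_{-2}^{2}=|\varphi|^{2}$, $t^{2}d_{-2}^{2}=|\psi|^{2}$ and distributing $t^{(x+y+2z+2w)/2}=t^{x/2}t^{y/2}t^{z}t^{w}$ over the four factors, one rewrites
\begin{align}
P(x,y,z,w)&=\bigl(t\mathbf{B}_{N}+|\mathbf{T}|^{2}\bigr)^{x/2}\bigl(t\mathbf{C}_{N}+|\nabla\varphi|^{2}\bigr)^{y/2}\bigl(t^{2}\mathbf{C}_{N}+t|\nabla\varphi|^{2}+|\varphi|^{2}\bigr)^{z/2}\notag\\
&\quad\times\bigl(t^{2}\mathbf{D}_{N}+t|\nabla\psi|^{2}+|\psi|^{2}\bigr)^{w/2}-|\mathbf{T}|^{x}|\nabla\varphi|^{y}|\varphi|^{z}|\psi|^{w}.\notag
\end{align}
Setting $E_{1}=(t\mathbf{B}_{N}+|\mathbf{T}|^{2})^{1/2}$, $E_{2}=(t\mathbf{C}_{N}+|\nabla\varphi|^{2})^{1/2}$, $E_{3}=(t^{2}\mathbf{C}_{N}+t|\nabla\varphi|^{2}+|\varphi|^{2})^{1/2}$, $E_{4}=(t^{2}\mathbf{D}_{N}+t|\nabla\psi|^{2}+|\psi|^{2})^{1/2}$ and $F_{1}=|\mathbf{T}|$, $F_{2}=|\nabla\varphi|$, $F_{3}=|\varphi|$, $F_{4}=|\psi|$, the nonnegativity of the suppressed terms gives $E_{i}\ge F_{i}\ge0$, and $P(x,y,z,w)=E_{1}^{x}E_{2}^{y}E_{3}^{z}E_{4}^{w}-F_{1}^{x}F_{2}^{y}F_{3}^{z}F_{4}^{w}$.

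Next I would record two elementary estimates for the $E_{i}$ and $F_{i}$, using $\mathbf{B}_{N},\mathbf{C}_{N},\mathbf{D}_{N}\le\Phi_{N}$, the constants $|\varphi|^{2}=42$, $|\psi|^{2}=168$ (hence $|\varphi|^{2},|\psi|^{2}\le\Phi_{N}$), the pointwise bounds $|\nabla\varphi|^{2},|\nabla\psi|^{2}\le C|\mathbf{T}|^{2}\le C\Phi_{N}$ coming from $\nabla\varphi=\mathbf{T}\ast\psi$, $\nabla\psi=\mathbf{T}\ast\varphi$ and Lemma~\ref{lemma2.1}, and the inequality $\sqrt{a+b+c}\le\sqrt a+\sqrt b+\sqrt c$. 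First, each of $E_{1},E_{2},F_{1},F_{2}$ is bounded by $\Phi_{N}^{1/2}$ times a polynomial in $t^{1/2}$ of degree $\le1$, and each of $E_{3},E_{4},F_{3},F_{4}$ by $\Phi_{N}^{1/2}$ times a polynomial in $t^{1/2}$ of degree $\le2$, with coefficients depending only on the dimension. Second, and crucially, every difference vanishes at $t=0$:
\[
E_{1}-F_{1}\le(t\mathbf{B}_{N})^{1/2}\le t^{1/2}\Phi_{N}^{1/2},\qquad E_{2}-F_{2}\le t^{1/2}\Phi_{N}^{1/2},
\]
\[
E_{3}-F_{3}\le(t^{2}\mathbf{C}_{N}+t|\nabla\varphi|^{2})^{1/2}\le C(t^{1/2}+t)\Phi_{N}^{1/2},\qquad E_{4}-F_{4}\le C(t^{1/2}+t)\Phi_{N}^{1/2},
\]
so each $E_{i}-F_{i}$ is $\Phi_{N}^{1/2}$ times a polynomial in $t^{1/2}$ with \emph{no constant term} (of degree $\le1$ for $i=1,2$ and $\le2$ for $i=3,4$).

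Finally I would expand $P(x,y,z,w)=E_{1}^{x}E_{2}^{y}E_{3}^{z}E_{4}^{w}-F_{1}^{x}F_{2}^{y}F_{3}^{z}F_{4}^{w}$ by the telescoping identity applied to the list of $n=x+y+z+w$ factors ($x$ copies of the pair $(E_{1},F_{1})$, then $y$ of $(E_{2},F_{2})$, then $z$ of $(E_{3},F_{3})$, then $w$ of $(E_{4},F_{4})$). Each of the $n$ resulting summands is a product of $n-1$ factors of the first kind above and exactly one difference factor of the second kind; hence it is bounded by $\Phi_{N}^{n/2}$ times a polynomial in $t^{1/2}$ whose minimal degree is $\ge1$ (owing to the difference factor, which has no constant term) and whose degree is at most $x\cdot1+y\cdot1+z\cdot2+w\cdot2=x+y+2z+2w$ (each $E_{1}$- or $E_{2}$-slot, and its difference, contributes degree $\le1$, each $E_{3}$- or $E_{4}$-slot degree $\le2$). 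Summing the $n$ summands and collecting constants gives $P(x,y,z,w)\le C\sum_{i=1}^{x+y+2z+2w}t^{i/2}\Phi_{N}^{(x+y+z+w)/2}$ with $C=C(x,y,z,w)$ (the dimensional constants of Lemma~\ref{lemma2.1} being absolute); the degenerate case $x=y=z=w=0$ gives $P\equiv0$.

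The only genuine subtlety is the vanishing at $t=0$ recorded above: if instead one bounded the positive product $E_{1}^{x}E_{2}^{y}E_{3}^{z}E_{4}^{w}$ from above and the subtracted monomial $F_{1}^{x}F_{2}^{y}F_{3}^{z}F_{4}^{w}$ from below and took the difference, one would be left with a term proportional to $\Phi_{N}^{(x+y+z+w)/2}$ carrying no positive power of $t$, which is absent from the claimed bound. It is precisely the telescoping structure, together with the fact that $P(x,y,z,w)$ and each $E_{i}-F_{i}$ are $O(t^{1/2})$ as $t\to0^{+}$, that produces the cancellation of the $t^{0}$ term and forces the right-hand side to begin at $i=1$; everything else is bookkeeping with the definitions of $\mathbf{B}_{N},\mathbf{C}_{N},\mathbf{D}_{N}$, of $b_{-1},c_{-1},c_{-2},d_{-1},d_{-2}$, and the $G_{2}$ identities of Section~\ref{section2}.
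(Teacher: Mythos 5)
Your proposal is correct and fills in the details the paper omits (the paper's proof is only the sentence ``We can obtain this lemma by calculating straightly''). The algebraic regrouping of $t$ powers into the four bracketed factors (yielding $E_1^x E_2^y E_3^z E_4^w - F_1^x F_2^y F_3^z F_4^w$), the telescoping identity, and the crucial observation that each $E_i - F_i$ starts at order $t^{1/2}$ (via $\sqrt{a+b}-\sqrt b\le\sqrt a$) while $E_i,F_i$ are bounded by $\Phi_N^{1/2}$ times polynomials in $t^{1/2}$ of degree $1$ (for $i=1,2$) or $2$ (for $i=3,4$), together give precisely the claimed bound with minimal exponent $i=1$ and maximal exponent $i=x+y+2z+2w$, and total $\Phi_N$-power $(x+y+z+w)/2$. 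This is presumably the ``direct calculation'' the authors had in mind; your write-up makes it explicit, including the genuine subtlety that a naive triangle-inequality bound would produce a spurious $t^0$ term, which the telescoping structure eliminates.
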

\begin{proof}
    We can obtain this lemma by {a direct calculation.}
\end{proof}

\begin{lemma}\label{lemma 4.3}
     Let $(M,\psi(t))_{t\in[0,T]}$ be a solution of the modified Laplacian coflow $\eqref{modified Laplacian coflow}$. There exists a universal constant $C$ such that
     \begin{align}
         \left(\frac{\partial}{\partial t}-\triangle\right)\mathbf{A}_{N}&\leq-\frac{7}{4}\tilde{\mathbf{A}}_{N+1}+\frac{3}{16}\Psi_{N+1}+C\Psi_{N}\left(t\Phi_{N}+t^{2}\Phi_{N}^{2}+t^{3}\Phi_{N}^{3}+t^{4}\Phi_{N}^{4}\right)\notag\\
        &\quad +C\Phi_{N}^{4}\left(1+t\Phi_{N}+t^{2}\Phi_{N}^{2}+t^{3}\Phi_{N}^{3}\right)\notag.
     \end{align}
\end{lemma}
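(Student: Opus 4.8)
The plan is to bound $\left(\frac{\partial}{\partial t}-\triangle\right)a_k^2$ for each $0\le k\le N$ and then sum over $k$. Since $a_k^2=\frac{t^k}{((k+1)!)^2}|\nabla^k{\rm Rm}|^2$, the product rule gives
$$\left(\frac{\partial}{\partial t}-\triangle\right)a_k^2=\frac{kt^{k-1}}{((k+1)!)^2}|\nabla^k{\rm Rm}|^2+\frac{t^k}{((k+1)!)^2}\left(\frac{\partial}{\partial t}-\triangle\right)|\nabla^k{\rm Rm}|^2,$$
and the Bochner identity reads $\left(\frac{\partial}{\partial t}-\triangle\right)|\nabla^k{\rm Rm}|^2=2\langle\nabla^k{\rm Rm},(\partial_t-\triangle)\nabla^k{\rm Rm}\rangle-2|\nabla^{k+1}{\rm Rm}|^2+(\partial_tg^{-1})\ast\nabla^k{\rm Rm}\ast\nabla^k{\rm Rm}$. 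First I would combine the good term $-2|\nabla^{k+1}{\rm Rm}|^2$ (whose weighted form is exactly $-2\tilde a_{k+1}^2$) with the weight–derivative contribution, which equals $\frac{k}{(k+1)^2}\tilde a_k^2\le\frac14\tilde a_k^2$ because $k/(k+1)^2\le\frac14$; summing over $0\le k\le N$ produces $-2\tilde{\mathbf A}_{N+1}+\frac14\tilde{\mathbf A}_{N+1}=-\frac74\tilde{\mathbf A}_{N+1}$. The last term of the Bochner identity is harmless: by $\eqref{Ricci-like coflow}$, $\partial_tg^{-1}$ is schematically ${\rm Rm}+\mathbf T\ast\mathbf T\ast\varphi\ast\varphi+A\ast\mathbf T+\mathbf T\ast\mathbf T$, so after weighting it is $\le C\Phi_N\mathbf A_N\le C\Phi_N^2$, which is swallowed by the last line of the claimed inequality (note $\Phi_N\ge|\varphi|^2+|\psi|^2\ge1$).

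It remains to treat the reaction term $2\langle\nabla^k{\rm Rm},(\partial_t-\triangle)\nabla^k{\rm Rm}\rangle$. Using Proposition \ref{prop 3.8} with $S={\rm Rm}$ and then substituting Lemma \ref{lemma 3.2}, one writes $(\partial_t-\triangle)\nabla^k{\rm Rm}$ as $\nabla^k$ of the schematic right-hand side of Lemma \ref{lemma 3.2} plus the commutator terms from Proposition \ref{prop 3.8}. Distributing each $\nabla^k$ (and each $\nabla^{k+2}$, on the $\nabla^2(\cdot)$ pieces) over the $\ast$-products by Leibniz, one gets a sum of terms $\nabla^k{\rm Rm}\ast(\text{product of }\nabla^{i_1}{\rm Rm},\nabla^{i_2}\mathbf T,\ldots,A,\varphi,\psi)$ with total derivative count $k$, $k+1$ or $k+2$. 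After multiplying by $\frac{t^k}{((k+1)!)^2}$, each term is re-expressed through the scale-invariant quantities $a,b,c,d$ and their shifted relatives $b_{-1},c_{-1},c_{-2},d_{-2}$; the point is that the binomial coefficients from Leibniz and the factors $\frac{(k+2)!}{(i+2)!(k-i)!}$, $\frac{(k+1)!}{(i+1)!(k-i)!}$ from Proposition \ref{prop 3.8} are dominated by the reciprocal-factorial weights built into the definitions of $a_k,b_k,\ldots$, so every coefficient becomes a universal constant.

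The terms in which the top-order derivative ($\nabla^{k+1}{\rm Rm}$, or $\nabla^{k+2}\mathbf T$, $\nabla^{k+2}\varphi$, $\nabla^{k+2}\psi$ arising from the $A\ast\nabla^2\mathbf T$, $\nabla^2(\mathbf T\ast\mathbf T)$ and $\nabla^2(\mathbf T\ast\mathbf T\ast\varphi\ast\varphi)$ pieces) lands on one factor are handled by Cauchy–Schwarz $XY\le\epsilon X^2+\tfrac1{4\epsilon}Y^2$: the $X^2$-parts contribute $\tilde a_{k+1}^2,\tilde b_{k+1}^2,\tilde c_{k+1}^2$ or $\tilde d_{k+1}^2$, and choosing $\epsilon$ small enough that their summed coefficient is $\le\frac3{16}$ yields the $\frac3{16}\Psi_{N+1}$ term (the net coefficient of $\tilde{\mathbf A}_{N+1}$ staying negative, $-\frac74+\frac3{16}<0$); the $Y^2$-parts always carry an extra low-order factor ($|\mathbf T|^2$, $A^2$ with $A^2\le\Phi_N$, $|\varphi|^2$, $|\psi|^2$, or a weighted lower-order norm) and hence fall into the $\Phi_N$-polynomial group. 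Every remaining term has all factors at order $\le k$, or at order $\le k+1$ but only linearly, so it is a convolution of $(a_k),(b_k),(c_k),(d_k)$ with finitely many shifted lower-order quantities; bounding these convolutions by $\Phi_N$- and $\Psi_N$-factors via Cauchy–Schwarz and discrete Young, and invoking the preceding lemma (the estimate for $P(x,y,z,w)$) to convert the products $b_{-1}^xc_{-1}^yc_{-2}^zd_{-2}^w$ together with their powers of $t$ into $\sum_{i\ge1}t^{i/2}\Phi_N^{(x+y+z+w)/2}$, assembles exactly into $C\Psi_N(t\Phi_N+t^2\Phi_N^2+t^3\Phi_N^3+t^4\Phi_N^4)+C\Phi_N^4(1+t\Phi_N+t^2\Phi_N^2+t^3\Phi_N^3)$, the quartic power coming from the worst reaction term ${\rm Rm}\ast\mathbf T\ast\mathbf T\ast\varphi\ast\varphi$ of Lemma \ref{lemma 3.2} after pairing with $\nabla^k{\rm Rm}$. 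Collecting the three groups gives the inequality. I expect the main obstacle to be precisely this bookkeeping: checking, uniformly in $k$ and in all the split indices, that the factorial and binomial coefficients are controlled by the weights hard-wired into $a_k,b_k,c_k,d_k$, while simultaneously tracking the powers of $t$ so that they combine into exactly the displayed polynomial in $t\Phi_N$ — a single stray factor of $k$ or of $t^{1/2}$ would destroy either the uniformity in $N$ or the clean form of the estimate.
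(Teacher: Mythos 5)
Your approach coincides with the paper's: combine Proposition~\ref{prop 3.8} (with $S={\rm Rm}$, $p=4$) and Lemma~\ref{lemma 3.2}, write the Bochner identity for $|\nabla^k{\rm Rm}|^2$, convert to the weighted quantities $a_k,\tilde a_k$ so the dissipative term gives $-2\tilde{\mathbf A}_{N+1}$ and the $t$-derivative of the weight gives $\sum\frac{k}{(k+1)^2}\tilde a_k^2\le\frac14\tilde{\mathbf A}_{N+1}$, then estimate the reaction sums $I_1,\dots,I_6$ via the $P(x,y,z,w)$ lemma and Cauchy--Schwarz absorbing the top-order pieces into $\frac{3}{16}\Psi_{N+1}$.

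There is, however, one concrete misstep in the middle paragraph. The $(\partial_t g^{-1})$ contribution to the Bochner identity carries a hidden factor of $(k+4)$, since $|\nabla^k{\rm Rm}|^2=(g^{-1})^{k+4}\ast\nabla^k{\rm Rm}\ast\nabla^k{\rm Rm}$ involves $k+4$ inverse metrics. After weighting, that term is $C(k+4)\,|h|\,a_k^2$, and the asserted bound ``$\le C\Phi_N\mathbf A_N$'' is false as stated because $\sum_k(k+4)a_k^2$ is not comparable to $\mathbf A_N$ with a $k$-independent constant. The step is fixable: either absorb the $(k+4)$ into the $i=0$ term of the first reaction sum exactly as the paper does, since $\frac{(k+2)!}{2!\,k!}=\frac{(k+1)(k+2)}{2}\ge k+4$ for $k\ge 2$, so the Ricci part of $\partial_t g^{-1}$ hitting $\nabla^k{\rm Rm}$ is already dominated by the $i=0$ summand of $I_1(k)$ (and likewise the $\mathbf T$-parts by $I_2,I_3,I_4$); or use $a_k=\frac{t^{1/2}}{k+1}\tilde a_k$, giving $(k+4)a_k^2\le 2t\tilde a_k^2$ for $k\ge1$, hence $\sum_{k\ge1}(k+4)|h|a_k^2\le Ct\Psi_N\Phi_N$, with the $k=0$ term $\le C\Phi_N^2$ handled separately. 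Either repair keeps the term inside the claimed right-hand side, but the step as you wrote it would let a stray factor of $k$ escape, which is precisely the kind of breakdown of uniformity in $N$ you flag at the end as the main danger.
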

\begin{proof}

Applying Proposition \ref{prop 3.8} to $S={\rm Rm}(p=4)$, and subsequently applying $\nabla^{k}$ to Lemma \ref{lemma 3.2}, we obtain
    \begin{align}
        \left|\left(\frac{\partial}{\partial t}-\triangle\right)\nabla^{k}{\rm Rm}\right| &\leq C\sum_{i=0}^{k}\frac{(k+2)!}{(i+2)!(k-i)!}|\nabla^{i}{\rm Rm}||\nabla^{k-i}{\rm Rm}|\notag\\
        &\quad+C\sum_{i=0}^{k}\frac{(k+1)!}{(i+1)!(k-i)!}|\nabla^{i}(\mathbf{T}\ast\mathbf{T}\ast\varphi\ast\varphi)||\nabla^{k-i}{\rm Rm}|\notag\\
        &\quad+C\sum_{i=0}^{k}\frac{(k+1)!}{(i+1)!(k-i)!}A|\nabla^{i}\mathbf{T}||\nabla^{k-i}{\rm Rm}|\notag\\
        &\quad+C\sum_{i=0}^{k}\frac{(k+1)!}{(i+1)!(k-i)!}|\nabla^{i}(\mathbf{T}\ast\mathbf{T})||\nabla^{k-i}{\rm Rm}|\notag\\
        &\quad+C|\nabla^{k+2}(\mathbf{T}\ast\mathbf{T}\ast\varphi\ast\varphi)|+CA|\nabla^{k+2}\mathbf{T}|+C|\nabla^{k+2}(\mathbf{T}\ast\mathbf{T})|\notag.
    \end{align}
    Note that $$|\nabla^{k}{\rm Rm}|^{2}=\left(g^{-1}\right)^{k+4}\ast\nabla^{k}{\rm Rm}\ast\nabla^{k}{\rm Rm},$$
    then we can provide the following estimate
    \begin{align}
        \left(\frac{\partial}{\partial t}-\triangle\right)|\nabla^{k}{\rm Rm}|^{2} &=-2|\nabla^{k+1}{\rm Rm}|^{2}+2\left\langle\left(\frac{\partial}{\partial t}-\triangle\right)\nabla^{k}{\rm Rm},\nabla^{k}{\rm Rm}\right\rangle\notag\\
        &\quad +2(k+4)\left({\rm Ric}+\frac{1}{2}\mathbf{T}\ast\mathbf{T}\ast\varphi\ast\varphi+2A\ast\mathbf{T}+\mathbf{T}\ast\mathbf{T}\right)\notag\\
        &\quad\ast\nabla^{k}{\rm Rm}\ast\nabla^{k}{\rm Rm}\notag\\
        &\leq  C\sum_{i=0}^{k}\frac{(k+2)!}{(i+2)!(k-i)!}|\nabla^{i}{\rm Rm}||\nabla^{k-i}{\rm Rm}||\nabla^{k}{\rm Rm}|\notag\\
        &\quad+C\sum_{i=0}^{k}\frac{(k+1)!}{(i+1)!(k-i)!}|\nabla^{i}(\mathbf{T}\ast\mathbf{T}\ast\varphi\ast\varphi)||\nabla^{k-i}{\rm Rm}||\nabla^{k}{\rm Rm}|\notag\\
        &\quad+C\sum_{i=0}^{k}\frac{(k+1)!}{(i+1)!(k-i)!}A|\nabla^{i}\mathbf{T}||\nabla^{k-i}{\rm Rm}||\nabla^{k}{\rm Rm}|\notag\\
        &\quad+C\sum_{i=0}^{k}\frac{(k+1)!}{(i+1)!(k-i)!}|\nabla^{i}(\mathbf{T}\ast\mathbf{T})||\nabla^{k-i}{\rm Rm}||\nabla^{k}{\rm Rm}|\notag\\
        &\quad+C|\nabla^{k+2}(\mathbf{T}\ast\mathbf{T}\ast\varphi\ast\varphi)||\nabla^{k}{\rm Rm}|+CA|\nabla^{k+2}\mathbf{T}||\nabla^{k}{\rm Rm}|\notag\\
        &\quad+C|\nabla^{k+2}(\mathbf{T}\ast\mathbf{T})||\nabla^{k}{\rm Rm}|-2|\nabla^{k+1}{\rm Rm}|^{2}\notag.
    \end{align}
    Thus by the definition of $a_{k}$ in $\eqref{def 1.1}$, it follows that
    \begin{align}\label{4.8}
        \left(\frac{\partial}{\partial t}-\triangle\right)a_{k}^{2}&\leq -2\tilde{a}_{k+1}^{2}+\frac{k\tilde{a}_{k}^{2}}{(k+1)^{2}}\\
        &\quad+I_{1}(k)+I_{2}(k)+I_{3}(k)+I_{4}(k)+I_{5}(k)+I_{6}(k),\notag
    \end{align}
    where
    \begin{align}
        I_{1}(k)&=\frac{Ct^{k}}{(k+1)!^{2}}\sum_{i=0}^{k}\frac{(k+2)!}{(i+2)!(k-i)!}|\nabla^{i}{\rm Rm}||\nabla^{k-i}{\rm Rm}||\nabla^{k}{\rm Rm}|\notag,\\
        I_{2}(k)&=\frac{Ct^{k}}{(k+1)!^{2}}\sum_{i=0}^{k}\frac{(k+1)!}{(i+1)!(k-i)!}|\nabla^{i}(\mathbf{T}\ast\mathbf{T}\ast\varphi\ast\varphi)||\nabla^{k-i}{\rm Rm}||\nabla^{k}{\rm Rm}|\notag,\\
        I_{3}(k)&=\frac{Ct^{k}}{(k+1)!^{2}}\sum_{i=0}^{k}\frac{(k+1)!}{(i+1)!(k-i)!}A|\nabla^{i}\mathbf{T}||\nabla^{k-i}{\rm Rm}||\nabla^{k}{\rm Rm}|\notag,\\
        I_{4}(k)&=\frac{Ct^{k}}{(k+1)!^{2}}\sum_{i=0}^{k}\frac{(k+1)!}{(i+1)!(k-i)!}|\nabla^{i}(\mathbf{T}\ast\mathbf{T})||\nabla^{k-i}{\rm Rm}||\nabla^{k}{\rm Rm}|\notag,\\
        I_{5}(k)&=\frac{Ct^{k}}{(k+1)!^{2}}|\nabla^{k+1}(\nabla\mathbf{T}\ast\mathbf{T}\ast\varphi\ast\varphi+\mathbf{T}\ast\mathbf{T}\ast\nabla\varphi\ast\varphi)||\nabla^{k}{\rm Rm}|\notag,\\
        I_{6}(k)&=\frac{Ct^{k}}{(k+1)!^{2}}\left(A|\nabla^{k+2}\mathbf{T}||\nabla^{k}{\rm Rm}|+|\nabla^{k+2}(\mathbf{T}\ast\mathbf{T})|\right)|\nabla^{k}{\rm Rm}|.\notag
    \end{align}
    To get the estimate for $\mathbf{A}_{N}$, we should sum $\eqref{4.8}$ from $0$ to $N$, for $k=0$, we have
    \begin{align}
        \left(\frac{\partial}{\partial t}-\triangle\right)a_{0}^{2}&\leq -2\tilde{a}_{1}^{2}+C\left(a_{0}^{3}+a_{0}^{2}|\mathbf{T}|^{2}|\varphi|^{2}+Aa_{0}^{2}|\mathbf{T}|+a_{0}^{2}|\mathbf{T}|^{2}+a_{0}\tilde{b}_{1}|\mathbf{T}||\varphi|^{2}\right.\notag\\
        &\quad+a_{0}b_{0}^{2}|\varphi|^{2}+a_{0}b_{0}|\mathbf{T}||\nabla\varphi||\varphi|+a_{0}c_{0}|\mathbf{T}|^{2}|\varphi|+a_{0}|\mathbf{T}|^{2}|\nabla\varphi|^{2}\notag\\
        &\quad\left.+Aa_{0}\tilde{b}_{1}+a_{0}\tilde{b}_{1}|\mathbf{T}|+a_{0}b_{0}^{2}\right)\notag\\
        &\leq -2\tilde{a}_{1}^{2}+C\sum_{i=3}^{7}\Phi_{N}^{\frac{i}{2}}+C\tilde{\mathbf{B}}_{N}^{\frac{1}{2}}(\Phi_{N}+\Phi_{N}^{2})\notag,
    \end{align}
    where the last inequality uses $|\nabla\varphi|\leq|\mathbf{T}||\psi|$ and Cauchy-Schwarz inequality.

    For $k=1$ to $N$, we should calculate the followings:
    \begin{align}
        \sum_{k=1}^{N}I_{1}(k)&\leq C\sum_{k=1}^{N}t^{\frac{1}{2}}\tilde{a}_{k}\sum_{i=0}^{k}t^{\frac{1}{2}}a_{i}\tilde{a}_{k-i}\\
        &\leq Ct\left(\sum_{k=1}^{N}\tilde{a}_{k}^{2}\right)^{\frac{1}{2}}\left(\sum_{k=1}^{N}\sum_{i=0}^{k}a_{i}^{2}\tilde{a}_{k-i}^{2}\right)^{\frac{1}{2}}\notag\\
        &\leq Ct\tilde{\mathbf{A}}_{N}^{\frac{1}{2}}\left(\tilde{\mathbf{A}}_{N}+\tilde{a}_{0}^{2}\right)^{\frac{1}{2}}\mathbf{A}_{N}^{\frac{1}{2}}\notag\\
        &\leq Ct\Psi_{N}\Phi_{N}^{\frac{1}{2}}+Ct^{\frac{1}{2}}\Psi_{N}^{\frac{1}{2}}\Phi_{N}\notag,\\
        \sum_{k=1}^{N}I_{2}(k)&\leq C\sum_{k=1}^{N}a_{k}\sum_{i=1}^{k}t^{\frac{1}{2}}\tilde{a}_{k-i}\sum_{j+l+m+n=i}t^{3}b_{j-1}b_{l-1}c_{m-2}c_{n-2}\\
        &\quad+C\sum_{k=1}^{N}t^{\frac{1}{2}}a_{k}\tilde{a}_{k}|\mathbf{T}|^{2}|\varphi|^{2}\notag\\
        &\leq Ct^{\frac{1}{2}}\mathbf{A}_{N}^{\frac{1}{2}}\left(\tilde{\mathbf{A}}_{N}+\tilde{a}_{0}^{2}\right)^{\frac{1}{2}}P(2,0,2,0)+Ct^{\frac{1}{2}}\tilde{\mathbf{A}}_{N}^{\frac{1}{2}}\mathbf{A}_{N}^{\frac{1}{2}}|\mathbf{T}|^{2}|\varphi|^{2}\notag\\
        &\leq C\sum_{i=1}^{7}t^{\frac{i}{2}}\Psi_{N}^{\frac{1}{2}}\Phi_{N}^{\frac{5}{2}}+C\sum_{i=1}^{6}t^{\frac{i}{2}}\Phi_{N}^{3}\notag,\\
        \sum_{k=1}^{N}I_{3}(k)&\leq C\sum_{k=1}^{N}a_{k}A\sum_{i=0}^{k}t\tilde{a}_{k-i}b_{i-1}\\
        &\leq CtA\mathbf{A}_{N}^{\frac{1}{2}}\left(\tilde{\mathbf{A}}_{N}+\tilde{a}_{0}^{2}\right)^{\frac{1}{2}}\left(\mathbf{B}_{N}+b_{-1}^{2}\right)^{\frac{1}{2}}\notag\\
        &\leq C(t^{\frac{1}{2}}+t)\Psi_{N}^{\frac{1}{2}}\Phi_{N}^{\frac{3}{2}}+C(1+t^{\frac{1}{2}})\Phi_{N}^{2}\notag,\\
        \sum_{k=1}^{N}I_{4}(k)&\leq C\sum_{k=1}^{N}a_{k}\sum_{i=1}^{k}t^{\frac{1}{2}}\tilde{a}_{k-i}\sum_{j+l=i}tb_{j-1}b_{l-1}+C\sum_{k=1}^{N}a_{k}^{2}|\mathbf{T}|^{2}\\
        &\leq Ct^{\frac{1}{2}}\mathbf{A}_{N}^{\frac{1}{2}}\left(\tilde{\mathbf{A}}_{N}+\tilde{a}_{0}^{2}\right)^{\frac{1}{2}}P(2,0,0,0)+C\mathbf{A}_{N}|\mathbf{T}|^{2}\notag\\
        &\leq C\sum_{i=2}^{3}t^{\frac{i}{2}}\Psi_{N}^{\frac{1}{2}}\Phi_{N}^{\frac{3}{2}}+C\sum_{i=0}^{2}t^{\frac{i}{2}}\Phi_{N}^{2}\notag,\\
        \sum_{k=1}^{N}I_{5}(k)&\leq C\sum_{k=1}^{N}a_{k}\sum_{i=0}^{k}t^{\frac{1}{2}}\tilde{b}_{i}\sum_{j+m+n=k+1-i}t^{2}b_{j-1}c_{m-2}c_{n-2}\\
        &\quad+C\sum_{k=1}^{N}t^{\frac{1}{2}}\tilde{a}_{k}\sum_{i+j+m+n=k+1}t^{2}b_{i-1}b_{j-1}c_{m-1}c_{n-2}\notag\\
        &\quad+C\sum_{k=1}^{N}a_{k}\tilde{b}_{k+1}|\mathbf{T}||\varphi|^{2}\notag\\
        &\leq C\mathbf{A}_{N}^{\frac{1}{2}}\left(\tilde{\mathbf{B}}_{N}+\tilde{b}_{0}^{2}\right)^{\frac{1}{2}}P(1,0,2,0)+C\tilde{\mathbf{A}}_{N}^{\frac{1}{2}}P(2,1,1,0)\notag\\
        &\quad+C\mathbf{A}_{N}^{\frac{1}{2}}\tilde{\mathbf{B}}_{N+1}^{\frac{1}{2}}|\mathbf{T}||\varphi|^{2}\notag\\
        &\leq \frac{1}{16}\Psi_{N+1}+C\Phi_{N}^{4}+C\sum_{i=1}^{5}t^{\frac{i}{2}}\Psi_{N}^{\frac{1}{2}}\Phi_{N}^{2}+C\sum_{i=0}^{4}t^{\frac{i}{2}}\Phi_{N}^{\frac{5}{2}}\notag,\\
        \sum_{k=1}^{N}I_{6}(k)&\leq C\sum_{k=1}^{N}t^{\frac{1}{2}}\tilde{a}_{k}\sum_{i=1}^{k+1}b_{i-1}b_{k+1-i}+C\sum_{k=1}^{N}a_{k}\tilde{b}_{k+1}(A+|\mathbf{T}|)\\
        &\leq Ct^{\frac{1}{2}}\tilde{\mathbf{A}}_{N}^{\frac{1}{2}}\mathbf{B}_{N}+C\mathbf{A}_{N}^{\frac{1}{2}}\tilde{\mathbf{B}}_{N+1}^{\frac{1}{2}}(A+|\mathbf{T}|)\notag\\
        &\leq \frac{1}{16}\Psi_{N+1}+C\Phi_{N}^{2}+Ct^{\frac{1}{2}}\Psi_{N}^{\frac{1}{2}}\Phi_{N}\notag.
    \end{align}
    Together with the above, since $\Phi_{N}>1$, we have 
    $$\Phi_{N}^{i}\leq \Phi_{N}^{j},\quad \quad \text{for}\ i\leq j,$$
    by the Cauchy-Schwarz inequality and
    $$\frac{k}{(k+1)^{2}}\leq\frac{1}{4},\quad \tilde{\mathbf{B}}_{N}\leq \tilde{\mathbf{B}}_{N+1}\leq \Psi_{N+1}.$$
    It follows that
    \begin{align}
        \left(\frac{\partial}{\partial t}-\triangle\right)\mathbf{A}_{N}&\leq -2\tilde{\mathbf{A}}_{N+1}+\sum_{k=1}^{N}\frac{k}{(k+1)^{2}}\tilde{a}_{k}^{2}+C\tilde{\mathbf{B}}_{N}^{\frac{1}{2}}\Phi_{N}^{2}+\frac{1}{8}\Psi_{N+1}\notag\\
        &\quad +C\Psi_{N}^{\frac{1}{2}}\left(t^{\frac{1}{2}}\Phi_{N}+\sum_{i=1}^{3}t^{\frac{i}{2}}\Phi_{N}^{\frac{3}{2}}+\sum_{i=1}^{5}t^{\frac{i}{2}}\Phi_{N}^{2}+\sum_{i=1}^{7}t^{\frac{i}{2}}\Phi_{N}^{\frac{5}{2}}\right)\notag\\
        &\quad+Ct\Psi_{N}\Phi_{N}^{\frac{1}{2}}+C\sum_{i=0}^{2}t^{\frac{i}{2}}\Phi_{N}^{2}+C\sum_{i=0}^{4}t^{\frac{i}{2}}\Phi_{N}^{\frac{5}{2}}+C\sum_{i=1}^{6}t^{\frac{i}{2}}\Phi_{N}^{3}+C\Phi_{N}^{4}\notag\\
        &\leq-\frac{7}{4}\tilde{\mathbf{A}}_{N+1}+\frac{3}{16}\Psi_{N+1}+C\Psi_{N}\left(t\Phi_{N}+t^{2}\Phi_{N}^{2}+t^{3}\Phi_{N}^{3}+t^{4}\Phi_{N}^{4}\right)\notag\\
        &\quad +C\Phi_{N}^{4}\left(1+t\Phi_{N}+t^{2}\Phi_{N}^{2}+t^{3}\Phi_{N}^{3}\right)\notag.
    \end{align}
    Thus we obtain the desired result.
\end{proof}

\begin{lemma}\label{lemma 4.4}
     Let $(M,\psi(t))_{t\in[0,T]}$ be a solution of the modified Laplacian coflow $\eqref{modified Laplacian coflow}$. There exists a universal constant $C$ such that
     \begin{align}
         \left(\frac{\partial}{\partial t}-\triangle\right)\mathbf{B}_{N}&\leq -\frac{7}{4}\tilde{\mathbf{B}}_{N+1}+\frac{3}{16}\Psi_{N+1}+C\Psi_{N}\left(t\Phi_{N}+t^{2}\Phi_{N}^{2}+t^{3}\Phi_{N}^{3}+t^{4}\Phi_{N}^{4}\right)\notag\\
        &\quad +C\Phi_{N}^{5}\left(1+t\Phi_{N}+t^{2}\Phi_{N}^{2}+t^{3}\Phi_{N}^{3}\right)\notag.
     \end{align}
\end{lemma}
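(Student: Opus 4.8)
I would prove Lemma 4.4 in exact parallel with the proof of Lemma 4.3, replacing the role of $\mathrm{Rm}$ by $\nabla\mathbf{T}$. The starting point is to apply Proposition \ref{prop 3.8} to the tensor $S=\mathbf{T}$ (so $p=2$) and then apply $\nabla^{k}$ to the evolution equation for $\mathbf{T}$ in Lemma \ref{lemma 3.1}. This produces a bound for $|(\partial_t-\triangle)\nabla^{k+1}\mathbf{T}|$ as a sum of terms of the schematic shape $\sum_i \frac{(k+2)!}{(i+2)!(k-i)!}|\nabla^i\mathrm{Rm}||\nabla^{k-i+1}\mathbf{T}|$ together with contributions coming from the right-hand side of Lemma \ref{lemma 3.1}, namely terms built from $\nabla^{k+1}(\mathrm{Rm}\ast\mathbf{T})$, $A\,\nabla^{k+1}(\nabla\mathbf{T}\ast\varphi)$, $\nabla^{k+1}(\nabla\mathbf{T}\ast\mathbf{T}\ast\varphi(\ast\varphi\ast\varphi))$, $\nabla^{k+1}(\mathbf{T}\ast\nabla\varphi\ast\nabla\varphi)$, $\nabla^{k+1}(\mathbf{T}\ast\mathbf{T}\ast\mathbf{T})$, and so on. Then, exactly as in the $\mathbf{A}_N$ case, I compute $(\partial_t-\triangle)|\nabla^{k+1}\mathbf{T}|^2$, picking up the good negative term $-2|\nabla^{k+2}\mathbf{T}|^2$, a term $2(k+\text{const})(\text{curvature-type factor})\ast\nabla^{k+1}\mathbf{T}\ast\nabla^{k+1}\mathbf{T}$ from differentiating the metric, and the cross terms above.

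Dividing by $(k+1)!^2$ and multiplying by $t^k$ converts this into an inequality for $(\partial_t-\triangle)b_k^2$ of the form
\begin{align}
\left(\frac{\partial}{\partial t}-\triangle\right)b_k^2 &\leq -2\tilde{b}_{k+1}^2+\frac{k\tilde{b}_k^2}{(k+1)^2}+\sum_{j=1}^{6}J_j(k),\notag
\end{align}
where each $J_j(k)$ is the direct analogue of the corresponding $I_j(k)$ from Lemma \ref{lemma 4.3}. I would then sum over $k=0,\dots,N$. The $k=0$ term is handled directly using $\nabla\varphi=\mathbf{T}\ast\psi$, $\nabla\psi=\mathbf{T}\ast\varphi$, Lemma \ref{lemma 3.5}-type manipulations, and Cauchy--Schwarz, producing a bound of the form $-2\tilde{b}_1^2 + C\sum \Phi_N^{i/2}+C\Psi_N^{1/2}(\Phi_N+\Phi_N^2+\cdots)$. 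For the sums over $k=1$ to $N$, each $\sum_k J_j(k)$ is estimated by the same pattern as in Lemma \ref{lemma 4.3}: decompose via the convolution-of-factorials identity, apply Cauchy--Schwarz to split into a $\Psi$-type factor times a product of $\mathbf{A}_N,\mathbf{B}_N,\mathbf{C}_N,\mathbf{D}_N$ factors, absorb the lowest-order $\tilde b$-terms into $\tfrac{1}{16}\Psi_{N+1}$, and control everything else using Lemma 4.1 ($P(x,y,z,w)$) to rewrite products like $\mathbf{B}_N^{x/2}\mathbf{C}_N^{y/2}\cdots$ in terms of $\Phi_N$-powers. Using $\Phi_N>1$, $\tfrac{k}{(k+1)^2}\leq\tfrac14$, and $\tilde{\mathbf{B}}_N\leq\Psi_{N+1}$, the $\tilde{b}_k^2$-error is absorbed so that the coefficient of $\tilde{\mathbf{B}}_{N+1}$ improves from $-2$ to $-\tfrac74$, and the collected $\Psi_{N+1}$-errors total $\tfrac{3}{16}\Psi_{N+1}$, yielding the claimed inequality.

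**The main obstacle.** The principal difficulty — and the only real difference from Lemma \ref{lemma 4.3} — is bookkeeping the higher powers of $\Phi_N$. Because the evolution equation for $\mathbf{T}$ in Lemma \ref{lemma 3.1} contains a cubic term $\mathbf{T}\ast\mathbf{T}\ast\mathbf{T}$ and quintic-type terms like $\mathbf{T}\ast\mathbf{T}\ast\mathbf{T}\ast\varphi\ast\varphi$, differentiating $k+1$ times and pairing against $\nabla^{k+1}\mathbf{T}$ produces, in the $k=0$ or low-$i$ regime, terms whose $\Phi_N$-homogeneity reaches degree $5$ rather than degree $4$ — this is precisely why the final estimate carries $\Phi_N^5$ where Lemma \ref{lemma 4.3} carried $\Phi_N^4$. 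I would need to track carefully, term by term, the worst-case power arising from each $J_j(k)$ (in particular from the analogue of $I_5$, which involves $\nabla^{k+2}$ of the cubic and higher torsion terms), confirm that none exceeds $\Phi_N^5(1+t\Phi_N+t^2\Phi_N^2+t^3\Phi_N^3)$ or $\Psi_N(t\Phi_N+\cdots+t^4\Phi_N^4)$, and make sure that the terms requiring $\tilde{\mathbf{B}}_{N+1}$ or $\tilde{\mathbf{C}}_{N+1},\tilde{\mathbf{D}}_{N+1}$ (coming from $\nabla^{k+2}\mathbf{T}$, $\nabla^{k+2}\varphi$, $\nabla^{k+2}\psi$ after using $\nabla\varphi=\mathbf{T}\ast\psi$ etc.) are each bounded by a small multiple of $\Psi_{N+1}$ with coefficients summing to at most $\tfrac{3}{16}$. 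The algebra is lengthy but entirely mechanical once the pattern from Lemma \ref{lemma 4.3} is in hand.
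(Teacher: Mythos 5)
Your proposal follows the paper's own proof almost line for line: apply Proposition~\ref{prop 3.8} with $S=\mathbf{T}$ (so $p=2$), apply $\nabla^{k}$ to Lemma~\ref{lemma 3.1}, derive the $b_k^2$ inequality with the $J_j(k)$ error terms, treat $k=0$ separately using $\nabla\varphi=\mathbf{T}\ast\psi$ and Cauchy--Schwarz, and for $k\geq 1$ invoke the $P(x,y,z,w)$ lemma and the factorial convolution identity to absorb $\tilde{\mathbf{B}}_{N+1}$-type contributions into $\tfrac{3}{16}\Psi_{N+1}$ and collect the rest as powers of $\Phi_N$. You also correctly isolate the only genuinely new feature relative to Lemma~\ref{lemma 4.3}, namely that the cubic and quintic torsion terms in Lemma~\ref{lemma 3.1} push the $\Phi_N$-power from $4$ to $5$; this is exactly how the paper argues.
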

\begin{proof}
    By Lemma \ref{lemma 3.1} and Proposition \ref{prop 3.8} (with $S=\mathbf{T}$ so $p=2$), one gets
    \begin{align}
        \left|\left(\frac{\partial}{\partial t}-\triangle\right)\nabla^{k+1}\mathbf{T}\right| &\leq C\sum_{i=0}^{k+1}\frac{(k+3)!}{(i+2)!(k+1-i)!}|\nabla^{i}{\rm Rm}||\nabla^{k+1-i}\mathbf{T}|\notag\\
        &\quad+C\sum_{i=0}^{k+1}\frac{(k+2)!}{(i+1)!(k+1-i)!}|\nabla^{i}(\mathbf{T}\ast\mathbf{T}\ast\varphi\ast\varphi)||\nabla^{k+1-i}\mathbf{T}|\notag\\
        &\quad+C\sum_{i=0}^{k+1}\frac{(k+2)!}{(i+1)!(k+1-i)!}A|\nabla^{i}\mathbf{T}||\nabla^{k+1-i}\mathbf{T}|\notag\\
        &\quad+C\sum_{i=0}^{k+1}\frac{(k+2)!}{(i+1)!(k+1-i)!}|\nabla^{i}(\mathbf{T}\ast\mathbf{T})||\nabla^{k+1-i}\mathbf{T}|\notag\\
        &\quad +C|\nabla^{k+1}(A\ast\nabla\mathbf{T}\ast\varphi)|+C|\nabla^{k+1}(\mathbf{T}\ast\nabla\mathbf{T}\ast\varphi)|\notag\\
        &\quad+C|\nabla^{k+1}(\nabla\mathbf{T}\ast\mathbf{T}\ast\varphi\ast\varphi\ast\varphi)|+C|\nabla^{k+1}(\mathbf{T}\ast\nabla\varphi\ast\nabla\varphi)|\notag.
    \end{align}
    Then by the definition of $b_{k}$ in (4.2), similar to Lemma \ref{lemma 4.3}, it follows that
     \begin{align}\label{4.14}
        \left(\frac{\partial}{\partial t}-\triangle\right)b_{k}^{2}&\leq -2\tilde{b}_{k+1}^{2}+\frac{k\tilde{b}_{k}^{2}}{(k+1)^{2}}\\
        &\quad+J_{1}(k)+J_{2}(k)+J_{3}(k)+J_{4}(k)+J_{5}(k)+J_{6}(k),\notag
    \end{align}
 where
    \begin{align}
        J_{1}(k)&=\frac{Ct^{k}}{(k+1)!^{2}}\sum_{i=0}^{k+1}\frac{(k+3)!}{(i+2)!(k+1-i)!}|\nabla^{i}{\rm Rm}||\nabla^{k+1-i}\mathbf{T}||\nabla^{k+1}\mathbf{T}|\notag,\\
        J_{2}(k)&=\frac{Ct^{k}}{(k+1)!^{2}}\sum_{i=0}^{k+1}\frac{(k+2)!}{(i+1)!(k+1-i)!}|\nabla^{i}(\mathbf{T}\ast\mathbf{T}\ast\varphi\ast\varphi)||\nabla^{k+1-i}\mathbf{T}||\nabla^{k+1}\mathbf{T}|\notag,\\
        J_{3}(k)&=\frac{Ct^{k}}{(k+1)!^{2}}\sum_{i=0}^{k+1}\frac{(k+2)!}{(i+1)!(k+1-i)!}A|\nabla^{i}\mathbf{T}||\nabla^{k+1-i}\mathbf{T}||\nabla^{k+1}\mathbf{T}|\notag,\\
        J_{4}(k)&=\frac{Ct^{k}}{(k+1)!^{2}}\sum_{i=0}^{k+1}\frac{(k+2)!}{(i+1)!(k+1-i)!}|\nabla^{i}(\mathbf{T}\ast\mathbf{T})||\nabla^{k+1-i}\mathbf{T}||\nabla^{k+1}\mathbf{T}|\notag,\\
        J_{5}(k)&=\frac{Ct^{k}}{(k+1)!^{2}}\left(|\nabla^{k+1}(A\ast\nabla\mathbf{T}\ast\varphi)|+|\nabla^{k+1}(\mathbf{T}\ast\nabla\mathbf{T}\ast\varphi)|\right)|\nabla^{k+1}\mathbf{T}|\notag,\\
        J_{6}(k)&=\frac{Ct^{k}}{(k+1)!^{2}}|\nabla^{k+1}(\nabla\mathbf{T}\ast\mathbf{T}\ast\varphi\ast\varphi\ast\varphi)||\nabla^{k+1}\mathbf{T}|\notag,\\
        J_{7}(k)&=\frac{Ct^{k}}{(k+1)!^{2}}|\nabla^{k+1}(\mathbf{T}\ast\nabla\varphi\ast\nabla\varphi)||\nabla^{k+1}\mathbf{T}|.\notag
    \end{align}
    Similarly, we should sum $\eqref{4.14}$ from $0$ to $N$, for $k=0$, by Cauchy-Schwarz inequality, one gets
    \begin{align}
        \left(\frac{\partial}{\partial t}-\triangle\right)b_{0}^{2}&\leq -2\tilde{b}_{1}^{2}+C\left(a_{0}b_{0}^{2}+\tilde{a}_{1}b_{0}|\mathbf{T}|+b_{0}^{2}|\mathbf{T}|^{2}|\varphi|^{2}+b_{0}|\mathbf{T}|^{4}|\varphi||\psi|+Ab_{0}^{2}|\mathbf{T}|\right.\notag\\
        &\quad+b_{0}^{2}|\mathbf{T}|^{2}+A\tilde{b}_{1}b_{0}|\varphi|+Ab_{0}^{2}|\mathbf{T}||\psi|+b_{0}^{3}|\varphi|+\tilde{b}_{1}b_{0}|\mathbf{T}||\varphi|+b_{0}^{2}|\mathbf{T}|^{2}|\psi|\notag\\
        &\quad\left.+\tilde{b}_{1}b_{0}|\mathbf{T}||\varphi|^{3}+b_{0}^{3}|\varphi|^{3}+b_{0}^{2}|\mathbf{T}|^{2}|\varphi|^{2}|\psi|+b_{0}^{2}|\mathbf{T}|^{2}|\psi|^{2}+b_{0}c_{0}|\mathbf{T}^{2}|\psi|\right)\notag\\
        &\leq -2\tilde{b}_{1}^{2}+\frac{1}{16}\Psi_{N}+C\Phi_{N}^{5}.\notag
    \end{align}
    where we use $|\nabla\varphi|\leq |\mathbf{T}||\psi|$ and $\Phi_{N}>1$.

    For $k=1$ to $N$, by the Cauchy-Schwarz inequality and
    $$\frac{k+3}{(i+2)(k+1-i)}=\frac{1}{i+2}+\frac{1}{k+1-i}.$$
    It follows that
    \begin{align}
    \sum_{k=1}^{N}J_{1}(k)&\leq C\sum_{k=1}^{N}t^{\frac{1}{2}}\tilde{b}_{k}\sum_{i=0}^{k}\frac{(k+2)(k+3)}{(k+1)(i+2)(k+1-i)}t^{\frac{1}{2}}a_{i}\tilde{b}_{k-i}\\
    &\quad+C\sum_{k=1}^{N}\tilde{a}_{k+1}b_{k}|\mathbf{T}|\notag\\
    &\leq Ct\tilde{\mathbf{B}}_{N}^{\frac{1}{2}}\mathbf{A}_{N}^{\frac{1}{2}}\left(\tilde{\mathbf{B}}_{N}+\tilde{b}_{0}^{2}\right)^{\frac{1}{2}}+C\tilde{\mathbf{A}}_{N+1}^{\frac{1}{2}}\mathbf{B}_{N}^{\frac{1}{2}}|\mathbf{T}|\notag\\
    &\leq Ct\Psi_{N}\Phi_{N}^{\frac{1}{2}}+Ct^{\frac{1}{2}}\Psi_{N}^{\frac{1}{2}}\Phi_{N}+\frac{1}{16}\Psi_{N+1}+C\Phi_{N}^{2}\notag,\\
    \sum_{k=1}^{N}J_{2}(k)&\leq C\sum_{k=1}^{N}t^{\frac{1}{2}}\tilde{b}_{k}\sum_{i=0}^{k+1}b_{k-i}\sum_{j+l+m+n=i}t^{3}b_{j-1}b_{l-1}c_{m-2}c_{n-2}\\
    &\leq C\tilde{\mathbf{B}}_{N}^{\frac{1}{2}}P(3,0,2,0)\notag\\
    &\leq C\sum_{i=1}^{7}t^{\frac{i}{2}}\Psi_{N}^{\frac{1}{2}}\Phi_{N}^{\frac{5}{2}}\notag,\\
    \sum_{k=1}^{N}J_{3}(k)&\leq C\sum_{k=1}^{N}t^{\frac{1}{2}}\tilde{b}_{k}\sum_{i=0}^{k+1}t^{\frac{1}{2}}Ab_{i-1}b_{k-i}\\
    &\leq C\tilde{\mathbf{B}}_{N}^{\frac{1}{2}}AP(2,0,0,0)\notag\\
    &\leq C\sum_{i=1}^{2}t^{\frac{i}{2}}\Psi_{N}^{\frac{1}{2}}\Phi_{N}^{\frac{3}{2}}\notag,\\
    \sum_{k=1}^{N}J_{4}(k)&\leq C\sum_{k=1}^{N}t^{\frac{1}{2}}\tilde{b}_{k}\sum_{i=0}^{k+1}b_{k-i}\sum_{j+l=i}tb_{j-1}b_{l-1}\\
    &\leq C\tilde{\mathbf{B}}_{N}^{\frac{1}{2}}P(3,0,0,0)\notag\\
    &\leq C\sum_{i=1}^{3}t^{\frac{i}{2}}\Psi_{N}^{\frac{1}{2}}\Phi_{N}^{\frac{3}{2}}\notag,\\
    \sum_{k=1}^{N}J_{5}(k)&\leq C\sum_{k=1}^{N}b_{k}\sum_{i=0}^{k}tA\tilde{b}_{i}c_{k-i-1}+C\sum_{k=1}^{N}b_{k}\tilde{b}_{k+1}A|\varphi|\\
    &\quad+ C\sum_{k=1}^{N}b_{k}\sum_{i=0}^{k}t^{\frac{1}{2}}\tilde{b}_{i}\sum_{j+l=k+1-i}tb_{j-1}c_{l-2}+C\sum_{k=1}^{N}b_{k}\tilde{b}_{k+1}|\mathbf{T}||\varphi|\notag\\
    &\leq CtA\mathbf{B}_{N}^{\frac{1}{2}}\left(\tilde{\mathbf{B}}_{N}+\tilde{b}_{0}^{2}\right)^{\frac{1}{2}}\left(\mathbf{C}_{N}+c_{-1}^{2}\right)^{\frac{1}{2}}+C\mathbf{B}_{N}^{\frac{1}{2}}\tilde{\mathbf{B}}_{N+1}^{\frac{1}{2}}A|\varphi|\notag\\
    &\quad+C\mathbf{B}_{N}^{\frac{1}{2}}\left(\tilde{\mathbf{B}}_{N}+\tilde{b}_{0}^{2}\right)^{\frac{1}{2}}P(1,0,1,0)+C\mathbf{B}_{N}^{\frac{1}{2}}\tilde{\mathbf{B}}_{N+1}^{\frac{1}{2}}|\mathbf{T}||\varphi|\notag\\
    &\leq C\sum_{i=1}^{3}t^{\frac{i}{2}}\Psi_{N}^{\frac{1}{2}}\Phi_{N}^{\frac{3}{2}}+ C\sum_{i=0}^{2}t^{\frac{i}{2}}\Phi_{N}^{2}+\frac{1}{32}\Psi_{N+1}+C\Phi_{N}^{3}\notag,\\
    \sum_{k=1}^{N}J_{6}(k)&\leq C\sum_{k=1}^{N}b_{k}\sum_{i=0}^{k}t^{\frac{1}{2}}\tilde{b}_{i}\sum_{j+l+m+n=k+1-i}t^{3}b_{j-1}c_{l-2}c_{m-2}c_{n-2}\\
    &\quad+C\sum_{k=1}^{N}b_{k}\tilde{b}_{k+1}|\mathbf{T}||\varphi|^{3}\notag\\
    &\leq C\mathbf{B}_{N}^{\frac{1}{2}}\left(\tilde{\mathbf{B}}_{N}+\tilde{b}_{0}^{2}\right)^{\frac{1}{2}}P(1,0,3,0)+C\mathbf{B}_{N}^{\frac{1}{2}}\tilde{\mathbf{B}}_{N+1}^{\frac{1}{2}}|\mathbf{T}||\varphi|^{3}\notag\\
    &\leq C\sum_{i=1}^{7}t^{\frac{i}{2}}\Psi_{N}^{\frac{1}{2}}\Phi_{N}^{\frac{5}{2}}+ C\sum_{i=0}^{6}t^{\frac{i}{2}}\Phi_{N}^{3}+\frac{1}{32}\Psi_{N+1}+C\Phi_{N}^{5}\notag,\\
    \sum_{k=1}^{N}J_{7}(k)&\leq C\sum_{k=1}^{N}t^{\frac{1}{2}}\tilde{b}_{k}\sum_{i+j+l=k+1}tb_{i-1}c_{j-1}c_{l-1}\\
    &\leq C\tilde{\mathbf{B}}_{N}^{\frac{1}{2}}P(1,2,0,0)\notag\\
    &\leq C\sum_{i=1}^{3}t^{\frac{i}{2}}\Psi_{N}^{\frac{1}{2}}\Phi_{N}^{\frac{3}{2}}\notag.
    \end{align}
Submitting the above to $\eqref{4.14}$, {similarly} to Lemma \ref{lemma 4.3}, we get
    \begin{align}
        \left(\frac{\partial}{\partial t}-\triangle\right)\mathbf{B}_{N}
        &\leq-\frac{7}{4}\tilde{\mathbf{B}}_{N+1}+\frac{3}{16}\Psi_{N+1}+C\Psi_{N}\left(t\Phi_{N}+t^{2}\Phi_{N}^{2}+t^{3}\Phi_{N}^{3}+t^{4}\Phi_{N}^{4}\right)\notag\\
        &\quad +C\Phi_{N}^{5}\left(1+t\Phi_{N}+t^{2}\Phi_{N}^{2}+t^{3}\Phi_{N}^{3}\right)\notag.
    \end{align}
    Thus we prove Lemma \ref{lemma 4.4}.
\end{proof}

\begin{lemma}\label{lemma 4.5}
     Let $(M,\psi(t))_{t\in[0,T]}$ be a solution of the modified Laplacian coflow $\eqref{modified Laplacian coflow}$. There exists a universal constant $C$ such that
     \begin{align}
         \left(\frac{\partial}{\partial t}-\triangle\right)\mathbf{C}_{N}&\leq -\frac{7}{4}\tilde{\mathbf{C}}_{N+1}+C\Psi_{N}\left(t\Phi_{N}+t^{2}\Phi_{N}^{2}+t^{3}\Phi_{N}^{3}+t^{4}\Phi_{N}^{4}+t^{5}\Phi_{N}^{5}\right)\notag\\
        &\quad +\frac{3}{16}\Psi_{N+1}+C\Phi_{N}^{5}\left(1+t\Phi_{N}+t^{2}\Phi_{N}^{2}+t^{3}\Phi_{N}^{3}+t^{4}\Phi_{N}^{4}\right)\notag.
     \end{align}
\end{lemma}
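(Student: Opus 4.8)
The plan is to repeat the scheme of Lemmas \ref{lemma 4.3} and \ref{lemma 4.4}, this time differentiating the evolution equation for $\nabla\varphi$ in Lemma \ref{lemma 3.3}. First I would apply Proposition \ref{prop 3.8} with $S=\nabla\varphi$ (so $p=4$) and differentiation order $k+1$, and combine the resulting commutator estimate with $\nabla^{k+1}$ applied to the right-hand side of Lemma \ref{lemma 3.3}. This produces a pointwise bound for $\left|\left(\frac{\partial}{\partial t}-\triangle\right)\nabla^{k+2}\varphi\right|$ by sums of the same schematic shape as in the proofs of Lemmas \ref{lemma 4.3}--\ref{lemma 4.4}, the new ingredients being the additional $\varphi$- and $\psi$-factors carried by the terms of Lemma \ref{lemma 3.3} together with top-order contributions of the type $\nabla^{k+2}\mathbf{T}$, $\nabla^{k+1}(\nabla\mathbf{T}\ast\mathbf{T}\ast\varphi\ast\varphi\ast\varphi)$ and $\nabla^{k+1}(\mathbf{T}\ast\nabla\varphi\ast\nabla\varphi)$. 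Using $|\nabla^{k+2}\varphi|^{2}=(g^{-1})^{k+5}\ast\nabla^{k+2}\varphi\ast\nabla^{k+2}\varphi$ and the metric evolution $\eqref{Ricci-like coflow}$, I would then compute $\left(\frac{\partial}{\partial t}-\triangle\right)|\nabla^{k+2}\varphi|^{2}$, which yields the good term $-2|\nabla^{k+3}\varphi|^{2}$, the cross term $2\langle(\frac{\partial}{\partial t}-\triangle)\nabla^{k+2}\varphi,\nabla^{k+2}\varphi\rangle$, and lower-order terms coming from $\frac{\partial}{\partial t}g^{-1}$. After multiplying by $t^{k}/((k+1)!)^{2}$ and recalling the definition of $c_{k}$, this gives an inequality of the form $\left(\frac{\partial}{\partial t}-\triangle\right)c_{k}^{2}\le-2\tilde c_{k+1}^{2}+\frac{k\tilde c_{k}^{2}}{(k+1)^{2}}+K_{1}(k)+\cdots+K_{m}(k)$, each $K_{j}(k)$ being the weighted analogue of one term in the pointwise bound.

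Next I would handle the base case $k=0$ separately, bounding $\left(\frac{\partial}{\partial t}-\triangle\right)c_{0}^{2}$ by the Cauchy--Schwarz inequality together with $|\nabla\varphi|\le|\mathbf{T}||\psi|$, $|\nabla\psi|\le|\mathbf{T}||\varphi|$ and $\Phi_{N}>1$, absorbing the single top-order factor into $\frac{1}{16}\Psi_{N}$ and the rest into $C\Phi_{N}^{5}$. For $1\le k\le N$ I would sum the $K_{j}(k)$ exactly as in Lemmas \ref{lemma 4.3}--\ref{lemma 4.4}, using the combinatorial identities $\frac{k+3}{(i+2)(k+1-i)}=\frac{1}{i+2}+\frac{1}{k+1-i}$ and their analogues, the Cauchy--Schwarz inequality, $\frac{k}{(k+1)^{2}}\le\frac{1}{4}$, the relations $\nabla\varphi=\mathbf{T}\ast\psi$ and $\nabla\psi=\mathbf{T}\ast\varphi$, and the auxiliary estimate for $P(x,y,z,w)$ to convert the convolution sums $\sum_{j+l+\cdots=i}b_{j-1}c_{l-2}d_{m-2}\cdots$ into powers of $t$ times powers of $\Phi_{N}$; the genuinely top-order pieces (those involving $\tilde a_{k+1},\tilde b_{k+1},\tilde c_{k+1},\tilde d_{k+1}$) get absorbed into $\frac{3}{16}\Psi_{N+1}$ through Cauchy--Schwarz with a small constant, while $\sum_{k=1}^{N}\frac{k}{(k+1)^{2}}\tilde c_{k}^{2}\le\frac{1}{4}\tilde{\mathbf{C}}_{N}\le\frac{1}{4}\tilde{\mathbf{C}}_{N+1}$ accounts for the coefficient $-\frac{7}{4}$ in front of $\tilde{\mathbf{C}}_{N+1}$. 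Collecting everything gives the stated inequality.

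I expect the only genuine difficulty to be the bookkeeping rather than a new idea. Lemma \ref{lemma 3.3} has strictly more terms than Lemma \ref{lemma 3.2}, and each extra $\varphi$ or $\psi$ factor raises the homogeneity in $\Phi_{N}$ by one, which is exactly why the right-hand side of Lemma \ref{lemma 4.5} carries $t^{5}\Phi_{N}^{5}$ and $\Phi_{N}^{5}(1+t\Phi_{N}+t^{2}\Phi_{N}^{2}+t^{3}\Phi_{N}^{3}+t^{4}\Phi_{N}^{4})$ rather than the fourth-order expressions of Lemma \ref{lemma 4.3}. One must also track the ``constant'' tensors $\varphi,\psi$ carefully through the negative-index quantities $c_{-1},c_{-2},d_{-1},d_{-2}$ when invoking the $P(x,y,z,w)$ estimate. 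The delicate point is to split each $(k+2)$-th order derivative of $\mathbf{T}$ --- arising from terms like $\nabla^{k+1}(A\ast\nabla\mathbf{T}\ast\varphi)$ or $\nabla^{k+1}(\nabla\mathbf{T}\ast\mathbf{T}\ast\varphi\ast\varphi\ast\varphi)$ --- in such a way that, after combining with the analogous contributions of Lemmas \ref{lemma 4.3}--\ref{lemma 4.4} and of the forthcoming estimate for $\mathbf{D}_{N}$, the total coefficient of $\Psi_{N+1}$ remains within the budgeted $\frac{3}{16}$.
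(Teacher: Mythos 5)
Your proposal is correct and follows essentially the same route as the paper: apply Proposition \ref{prop 3.8} to $S=\nabla\varphi$ (with $p=4$) at order $k+1$, combine with $\nabla^{k+1}$ of the evolution equation in Lemma \ref{lemma 3.3}, multiply by $t^{k}/((k+1)!)^{2}$ to obtain a bound on $(\partial_t-\triangle)c_k^2$ of the form $-2\tilde c_{k+1}^2+\frac{k}{(k+1)^2}\tilde c_k^2+\sum_j K_j(k)$, treat $k=0$ separately via Cauchy--Schwarz and $|\nabla\varphi|\le|\mathbf{T}||\psi|$, and sum $k=1,\dots,N$ using the $P(x,y,z,w)$ lemma and the identity $\frac{k+3}{(i+2)(k+1-i)}=\frac{1}{i+2}+\frac{1}{k+1-i}$, absorbing top-order $\tilde{(\cdot)}_{k+1}$ contributions into $\frac{3}{16}\Psi_{N+1}$ and the $\frac{k}{(k+1)^2}\tilde c_k^2$ piece into the deficit that turns $-2\tilde{\mathbf{C}}_{N+1}$ into $-\frac{7}{4}\tilde{\mathbf{C}}_{N+1}$. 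You also correctly diagnose why the homogeneity rises to $\Phi_N^5$ and $t^5\Phi_N^5$ compared with Lemma \ref{lemma 4.3}, namely the extra $\varphi$- and $\psi$-factors in Lemma \ref{lemma 3.3} entering through the negative-index quantities $c_{-1},c_{-2},d_{-2}$.
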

\begin{proof}
     By Lemma \ref{lemma 3.3} and Proposition \ref{prop 3.8} (with $S=\nabla\varphi$ so $p=4$), it follows that
     \begin{align}
        \left|\left(\frac{\partial}{\partial t}-\triangle\right)\nabla^{k+2}\varphi\right| &\leq C\sum_{i=0}^{k+1}\frac{(k+3)!}{(i+2)!(k+1-i)!}|\nabla^{i}{\rm Rm}||\nabla^{k+2-i}\varphi|\notag\\
        &\quad+C\sum_{i=0}^{k+1}\frac{(k+2)!}{(i+1)!(k+1-i)!}|\nabla^{i}(\mathbf{T}\ast\mathbf{T}\ast\varphi\ast\varphi)||\nabla^{k+2-i}\varphi|\notag\\
        &\quad+C\sum_{i=0}^{k+1}\frac{(k+2)!}{(i+1)!(k+1-i)!}A|\nabla^{i}\mathbf{T}||\nabla^{k+2-i}\varphi|\notag\\
        &\quad+C\sum_{i=0}^{k+1}\frac{(k+2)!}{(i+1)!(k+1-i)!}|\nabla^{i}(\mathbf{T}\ast\mathbf{T})||\nabla^{k+2-i}\varphi|\notag\\
        &\quad +C|\nabla^{k+1}({\rm Rm}\ast\mathbf{T}\ast\psi)|+C|\nabla^{k+1}(\mathbf{T}\ast\nabla\varphi\ast\nabla\varphi\ast\psi)|\notag\\
        &\quad+C|\nabla^{k+1}(A\ast\mathbf{T}\ast\mathbf{T}\ast\psi)|+C|\nabla^{k+1}(\nabla\mathbf{T}\ast Q(A,\mathbf{T},\varphi,\psi))|\notag\\
&\quad+C|\nabla^{k+1}\left[\mathbf{T}\ast\mathbf{T}\ast\mathbf{T}\ast(\psi+\varphi\ast\varphi\ast\psi)\right]|\notag,
    \end{align}
    where $$Q:=Q(A,\mathbf{T},\varphi,\psi)=\mathbf{T}\ast\varphi+A\ast\varphi\ast\psi+\mathbf{T}\ast\varphi\ast\psi+\mathbf{T}\ast\varphi\ast\varphi\ast\varphi\ast\psi.$$
    Then by the definition of $c_{k}$ in (4.2), similar to Lemma \ref{lemma 4.3}, it follows that
     \begin{align}\label{4.25}
        \left(\frac{\partial}{\partial t}-\triangle\right)c_{k}^{2}&\leq -2\tilde{c}_{k+1}^{2}+\frac{k\tilde{c}_{k}^{2}}{(k+1)^{2}}+K_{1}(k)+K_{2}(k)+K_{3}(k)\\
        &\quad+K_{4}(k)+K_{5}(k)+K_{6}(k)+K_{7}(k)+K_{8}(k),\notag
    \end{align}
     where
    \begin{align}
        K_{1}(k)&=\frac{Ct^{k}}{(k+1)!^{2}}\sum_{i=0}^{k+1}\frac{(k+3)!}{(i+2)!(k+1-i)!}|\nabla^{i}{\rm Rm}||\nabla^{k+2-i}\varphi||\nabla^{k+2}\varphi|\notag,\\
        K_{2}(k)&=\frac{Ct^{k}}{(k+1)!^{2}}\sum_{i=0}^{k+1}\frac{(k+2)!}{(i+1)!(k+1-i)!}|\nabla^{i}(\mathbf{T}\ast\mathbf{T}\ast\varphi\ast\varphi)||\nabla^{k+2-i}\varphi||\nabla^{k+2}\varphi|\notag,\\
        K_{3}(k)&=\frac{Ct^{k}}{(k+1)!^{2}}\sum_{i=0}^{k+1}\frac{(k+2)!}{(i+1)!(k+1-i)!}A|\nabla^{i}\mathbf{T}||\nabla^{k+2-i}\varphi||\nabla^{k+2}\varphi|\notag,\\
        K_{4}(k)&=\frac{Ct^{k}}{(k+1)!^{2}}\sum_{i=0}^{k+1}\frac{(k+2)!}{(i+1)!(k+1-i)!}|\nabla^{i}(\mathbf{T}\ast\mathbf{T})||\nabla^{k+2-i}\varphi||\nabla^{k+2}\varphi|\notag,\\
        K_{5}(k)&=\frac{Ct^{k}}{(k+1)!^{2}}\left(|\nabla^{k+1}({\rm Rm}\ast\mathbf{T}\ast\psi)|+|\nabla^{k+1}(\mathbf{T}\ast\nabla\varphi\ast\nabla\varphi\ast\psi)|\right)|\nabla^{k+2}\varphi|\notag,\\
        K_{6}(k)&=\frac{Ct^{k}}{(k+1)!^{2}}\left(|\nabla^{k+1}(A\ast\mathbf{T}\ast\mathbf{T}\ast\psi)|+|\nabla^{k+1}(\mathbf{T}\ast\mathbf{T}\ast\mathbf{T}\ast\psi)|\right)|\nabla^{k+2}\varphi|\notag,\\
        K_{7}(k)&=\frac{Ct^{k}}{(k+1)!^{2}}|\nabla^{k+1}\left(\nabla\mathbf{T}\ast Q(A,\mathbf{T},\varphi,\psi)\right)||\nabla^{k+2}\varphi|\notag,\\
        K_{8}(k)&=\frac{Ct^{k}}{(k+1)!^{2}}|\nabla^{k+1}\left(\mathbf{T}\ast\mathbf{T}\ast\mathbf{T}\ast\varphi\ast\varphi\ast\psi\right)||\nabla^{k+2}\varphi|\notag.
    \end{align}
    Similarly, we should sum $\eqref{4.25}$ from $0$ to $N$, for $k=0$, by the Cauchy-Schwarz inequality, one gets
    \begin{align}
        \left(\frac{\partial}{\partial t}-\triangle\right)c_{0}^{2}
        &\leq -2\tilde{c}_{1}^{2}+\frac{1}{16}\Psi_{N}+C\Phi_{N}^{5},
    \end{align}
    where we use $|\nabla\varphi|\leq |\mathbf{T}||\psi|$ and $\Phi_{N}>1$.

    For $k=1$ to $N$, we should calculate the followings
    \begin{align}
        \sum_{k=1}^{N}K_{1}(k)&\leq C\sum_{k=1}^{N}t^{\frac{1}{2}}\tilde{c}_{k}\sum_{i=0}^{k}t^{\frac{1}{2}}a_{i}\tilde{c}_{k-i}+C\sum_{k=1}^{N}\tilde{a}_{k+1}c_{k}|\nabla\varphi|\\
        &\leq Ct\tilde{\mathbf{C}}_{N}^{\frac{1}{2}}\mathbf{A}_{N}^{\frac{1}{2}}\left(\tilde{\mathbf{C}}_{N}+\tilde{c}_{0}^{2}\right)^{\frac{1}{2}}+C\tilde{\mathbf{A}}_{N+1}^{\frac{1}{2}}\mathbf{C}_{N}^{\frac{1}{2}}|\mathbf{T}||\psi|\notag\\
        &\leq Ct\Psi_{N}\Phi_{N}^{\frac{1}{2}}+Ct^{\frac{1}{2}}\Psi_{N}^{\frac{1}{2}}\Phi_{N}+\frac{1}{16}\Psi_{N+1}+C\Phi_{N}^{3}\notag,\\
        \sum_{k=1}^{N}K_{2}(k)&\leq C\sum_{k=1}^{N}t^{\frac{1}{2}}\tilde{c}_{k}\sum_{i=0}^{k+1}c_{k-i}\sum_{j+l+m+n=i}t^{3}b_{j-1}b_{l-1}c_{m-2}c_{n-2}\\
        &\leq C\tilde{\mathbf{C}}_{N}^{\frac{1}{2}}P(2,1,2,0)\notag\\
        &\leq C\sum_{i=1}^{7}t^{\frac{i}{2}}\Psi_{N}^{\frac{1}{2}}\Phi_{N}^{\frac{5}{2}}\notag,\\
        \sum_{k=1}^{N}K_{3}(k)&\leq C\sum_{k=1}^{N}t^{\frac{1}{2}}\tilde{c}_{k}\sum_{i=0}^{k+1}t^{\frac{1}{2}}Ab_{i-1}c_{k-i}\\
        &\leq CA\tilde{\mathbf{C}}_{N}^{\frac{1}{2}}P(1,1,0,0)\notag\\
        &\leq C\sum_{i=1}^{2}t^{\frac{i}{2}}\Psi_{N}^{\frac{1}{2}}\Phi_{N}^{\frac{3}{2}}\notag,\\
        \sum_{k=1}^{N}K_{4}(k)&\leq C\sum_{k=1}^{N}t^{\frac{1}{2}}\tilde{c}_{k}\sum_{i=0}^{k+1}c_{k-i}\sum_{j+l=i}tb_{j-1}b_{l-1}\\
        &\leq C\tilde{\mathbf{C}}_{N}^{\frac{1}{2}}P(2,1,0,0)\notag\\
        &\leq C\sum_{i=1}^{3}t^{\frac{i}{2}}\Psi_{N}^{\frac{1}{2}}\Phi_{N}^{\frac{3}{2}}\notag,\\
        \sum_{k=1}^{N}K_{5}(k)&\leq C\sum_{k=1}^{N}c_{k}\sum_{i=0}^{k}t^{\frac{1}{2}}\tilde{a}_{i}\sum_{j+l=k+1-i}tb_{j-1}d_{l-2}+C\sum_{k=1}^{N}\tilde{a}_{k+1}c_{k}|\mathbf{T}||\psi|\\
        &\quad+C\sum_{k=1}^{N}c_{k}\sum_{i+j+l+m=k+1}t^{2}b_{i-1}c_{j-1}c_{l-1}d_{m-2}\notag\\
        &\leq C\left(\tilde{\mathbf{A}}_{N}+\tilde{a}_{0}^{2}\right)^{\frac{1}{2}}\mathbf{C}_{N}^{\frac{1}{2}}P(1,0,0,1)+C\tilde{\mathbf{A}}_{N+1}^{\frac{1}{2}}\mathbf{C}_{N}^{\frac{1}{2}}|\mathbf{T}||\psi|\notag\\
        &\quad+Ct^{-\frac{1}{2}}\mathbf{C}_{N}^{\frac{1}{2}}P(1,2,0,1)\notag\\
        &\leq \frac{1}{16}\Psi_{N+1}+C\Phi_{N}^{3}+C\sum_{i=1}^{3}t^{\frac{i}{2}}\Psi_{N}^{\frac{1}{2}}\Phi_{N}^{\frac{3}{2}}+C\sum_{i=0}^{4}t^{\frac{i}{2}}\left(\Phi_{N}^{2}+\Phi_{N}^{\frac{5}{2}}\right)\notag,\\
        \sum_{k=1}^{N}K_{6}(k)&\leq C\sum_{k=1}^{N}t^{\frac{1}{2}}\tilde{c}_{k}\sum_{i+j+l=k+1}t^{\frac{3}{2}}Ab_{i-1}b_{j-1}d_{l-2}\\
        &\quad+C\sum_{k=1}^{N}t^{\frac{1}{2}}\tilde{c}_{k}\sum_{i+j+l+m=k+1}t^{2}b_{i-1}b_{j-1}b_{l-1}d_{m-2}\notag\\
        &\leq CA\tilde{\mathbf{C}}_{N}^{\frac{1}{2}}P(2,0,0,1)+C\tilde{\mathbf{C}}_{N}^{\frac{1}{2}}P(3,0,0,1)\notag\\
        &\leq C\sum_{i=1}^{5}t^{\frac{i}{2}}\Psi_{N}^{\frac{1}{2}}\Phi_{N}^{2}\notag,\\
        \sum_{k=1}^{N}K_{8}(k)&\leq C\sum_{k=1}^{N}t^{\frac{1}{2}}\tilde{c}_{k}\sum_{i+j+l+m+n+s=k+1}t^{4}b_{i-1}b_{j-1}b_{l-1}c_{m-2}c_{n-2}d_{s-2}\\
        &\leq C\tilde{\mathbf{C}}_{N}^{\frac{1}{2}}P(3,0,2,1)\notag\\
        &\leq C\sum_{i=1}^{9}t^{\frac{i}{2}}\Psi_{N}^{\frac{1}{2}}\Phi_{N}^{3}\notag.
    \end{align}
For $K_{7}(k)$,  we only calculate the following as {an example} (others are similar):
\begin{align}
    &\quad\sum_{k=1}^{N}\frac{Ct^{k}}{(k+1)!^{2}}|\nabla^{k+1}\left(\nabla\mathbf{T}\ast \mathbf{T}\ast\varphi\right)||\nabla^{k+2}\varphi|\\
    &\leq \sum_{k=1}^{N}\frac{Ct^{k}}{(k+1)!^{2}}|\nabla^{k+2}\varphi|\sum_{i=0}^{k+1}\frac{(k+1)!}{i!(k+1-i)!}|\nabla^{i+1}\mathbf{T}||\nabla^{k+1-i}(\mathbf{T}\ast\varphi)|\notag\\
    &\leq C\sum_{k=1}^{N}c_{k}\sum_{i=0}^{k}t^{\frac{1}{2}}\tilde{b}_{i}\sum_{j+l=k+1-i}tb_{j-1}c_{l-2}+C\sum_{k=1}^{N}\tilde{b}_{k+1}c_{k}|\mathbf{T}||\varphi|\notag\\
    &\leq\frac{1}{64}\Psi_{N+1}+C\Phi_{N}^{3}+C\left(\tilde{\mathbf{B}}_{N}+\tilde{b}_{0}\right)^{\frac{1}{2}}\mathbf{C}_{N}^{\frac{1}{2}}P(1,0,1,0)\notag\\
    &\leq \frac{1}{64}\Psi_{N+1}+C\Phi_{N}^{3}+C\sum_{i=1}^{3}t^{\frac{i}{2}}\Psi_{N}^{\frac{1}{2}}\Phi_{N}^{\frac{3}{2}}+C\sum_{i=0}^{2}t^{\frac{i}{2}}\Phi_{N}^{2}\notag.
\end{align}
Thus, we get
\begin{align}
    \sum_{k=1}^{N}K_{7}(k)&\leq \frac{1}{16}\Psi_{N+1}+C\Phi_{N}^{5}+C\sum_{i=1}^{3}t^{\frac{i}{2}}\Psi_{N}^{\frac{1}{2}}\Phi_{N}^{\frac{3}{2}}+C\sum_{i=0}^{2}t^{\frac{i}{2}}\Phi_{N}^{2}\notag\\
    &\quad+C\sum_{i=1}^{5}t^{\frac{i}{2}}\Psi_{N}^{\frac{1}{2}}\Phi_{N}^{2}+C\sum_{i=0}^{4}t^{\frac{i}{2}}\Phi_{N}^{\frac{5}{2}}+C\sum_{i=1}^{9}t^{\frac{i}{2}}\Psi_{N}^{\frac{1}{2}}\Phi_{N}^{3}+C\sum_{i=0}^{8}t^{\frac{i}{2}}\Phi_{N}^{\frac{7}{2}}.\notag
\end{align}
{Substituting} the above to $\eqref{4.25}$, {similarly} to Lemma \ref{lemma 4.3}, we prove Lemma \ref{lemma 4.5}.
\end{proof}

\begin{lemma}\label{lemma 4.6}
     Let $(M,\psi(t))_{t\in[0,T]}$ be a solution of the modified Laplacian coflow $\eqref{modified Laplacian coflow}$. There exists a universal constant $C$ such that
    \begin{align}
         \left(\frac{\partial}{\partial t}-\triangle\right)\mathbf{D}_{N}&\leq -\frac{7}{4}\tilde{\mathbf{D}}_{N+1}+C\Psi_{N}\left(t\Phi_{N}+t^{2}\Phi_{N}^{2}+t^{3}\Phi_{N}^{3}+t^{4}\Phi_{N}^{4}++t^{5}\Phi_{N}^{5}\right)\notag\\
        &\quad +\frac{3}{16}\Psi_{N+1}+C\Phi_{N}^{5}\left(1+t\Phi_{N}+t^{2}\Phi_{N}^{2}+t^{3}\Phi_{N}^{3}+t^{4}\Phi_{N}^{4}\right)\notag.
     \end{align}
\end{lemma}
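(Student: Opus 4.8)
The plan is to run the argument of Lemmas \ref{lemma 4.3}--\ref{lemma 4.5} once more, this time with $S=\nabla\psi$, which is a $5$-tensor, so $p=5$ in Proposition \ref{prop 3.8}. First I would apply $\nabla^{k}$ to the evolution equation for $\nabla\psi$ established in Section \ref{section3} and insert the commutator bound of Proposition \ref{prop 3.8}. That evolution equation has the same shape as the one for $\nabla\varphi$ in Lemma \ref{lemma 3.3}, with $\varphi$ and $\psi$ interchanged in the $\ast$-monomials and with $Q'=\mathbf{T}\ast\psi+A\ast\varphi\ast\varphi+\mathbf{T}\ast\varphi\ast\varphi+\mathbf{T}\ast\varphi\ast\varphi\ast\varphi\ast\varphi$ in place of $Q$, so the outcome is a bound on $\bigl|(\partial_{t}-\triangle)\nabla^{k+2}\psi\bigr|$ by a sum of terms $\tfrac{(k+3)!}{(i+2)!(k+1-i)!}|\nabla^{i}{\rm Rm}|\,|\nabla^{k+2-i}\psi|$, the analogous $A$-, $\mathbf{T}\ast\mathbf{T}$- and $\mathbf{T}\ast\mathbf{T}\ast\varphi\ast\varphi$-sums, together with the lower-order pieces $|\nabla^{k+1}({\rm Rm}\ast\mathbf{T}\ast\varphi)|$, $|\nabla^{k+1}(\mathbf{T}\ast\nabla\varphi\ast\nabla\varphi\ast\varphi)|$, $A\,|\nabla^{k+1}(\mathbf{T}\ast\mathbf{T}\ast\varphi)|$, $|\nabla^{k+1}(\nabla\mathbf{T}\ast Q')|$ and $|\nabla^{k+1}(\mathbf{T}\ast\mathbf{T}\ast\mathbf{T}\ast(\varphi+\varphi\ast\varphi\ast\varphi))|$.

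Next I would compute $(\partial_{t}-\triangle)|\nabla^{k+2}\psi|^{2}$ from $|\nabla^{k+2}\psi|^{2}=(g^{-1})^{k+6}\ast\nabla^{k+2}\psi\ast\nabla^{k+2}\psi$ and the metric evolution \eqref{Ricci-like coflow}; this produces the good term $-2|\nabla^{k+3}\psi|^{2}$, the pairing $2\langle(\partial_{t}-\triangle)\nabla^{k+2}\psi,\nabla^{k+2}\psi\rangle$ controlled by the first step, and a term of the shape $2(k+6)\bigl({\rm Ric}+\tfrac12\mathbf{T}\ast\mathbf{T}\ast\varphi\ast\varphi+2A\ast\mathbf{T}+\mathbf{T}\ast\mathbf{T}\bigr)\ast\nabla^{k+2}\psi\ast\nabla^{k+2}\psi$ coming from $\partial_{t}g^{-1}$. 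Multiplying through by $t^{k}/((k+1)!)^{2}$ and unwinding the definitions of $d_{k}$ and $\tilde{d}_{k}$ gives
\begin{align}
\left(\frac{\partial}{\partial t}-\triangle\right)d_{k}^{2}\leq-2\tilde{d}_{k+1}^{2}+\frac{k\,\tilde{d}_{k}^{2}}{(k+1)^{2}}+L_{1}(k)+\cdots+L_{8}(k)\notag,
\end{align}
where $L_{1}(k),\dots,L_{8}(k)$ are the precise analogues of $K_{1}(k),\dots,K_{8}(k)$ in the proof of Lemma \ref{lemma 4.5}, obtained by replacing every $\nabla^{k+2}\varphi$ by $\nabla^{k+2}\psi$ (hence every $c_{m}$ in the resulting estimates by $d_{m}$) and interchanging $\varphi$ and $\psi$ in the bounded $\ast$-factors.

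The rest is bookkeeping. For $k=0$ I would expand $(\partial_{t}-\triangle)d_{0}^{2}$ directly and estimate term by term with Cauchy-Schwarz, using $|\nabla\varphi|\leq|\mathbf{T}||\psi|$ and $\Phi_{N}>1$, to get $(\partial_{t}-\triangle)d_{0}^{2}\leq-2\tilde{d}_{1}^{2}+\tfrac1{16}\Psi_{N}+C\Phi_{N}^{5}$. For $1\leq k\leq N$ I would sum each $L_{j}(k)$ over $k$ exactly as the $K_{j}(k)$ were summed in Lemma \ref{lemma 4.5}: split the factors $\nabla^{i}(\cdot)$ by Leibniz, rewrite the double sums through $a_{i},b_{i},c_{i},d_{i}$ and their tilded companions, apply Cauchy-Schwarz together with the elementary bound on $P(x,y,z,w)$ proved above to convert everything into expressions $C\sum_{i}t^{i/2}\Psi_{N}^{1/2}\Phi_{N}^{m/2}$ and $C\sum_{i}t^{i/2}\Phi_{N}^{m/2}$ with $m\leq 10$, and split off small multiples of $\Psi_{N+1}$ from the top-order factors $\tilde{d}_{k+1},\tilde{c}_{k+1},\tilde{b}_{k+1},\tilde{a}_{k+1}$ and from Young's inequality applied to mixed terms such as $\tilde{\mathbf{B}}_{N}^{1/2}\Phi_{N}^{2}$, arranging their aggregate coefficient to be at most $\tfrac3{16}$. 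Summing over $0\leq k\leq N$ and using $k/(k+1)^{2}\leq\tfrac14$, so that $\sum_{k}\tfrac{k}{(k+1)^{2}}\tilde{d}_{k}^{2}\leq\tfrac14\tilde{\mathbf{D}}_{N+1}$ and hence $-2\tilde{\mathbf{D}}_{N+1}+\tfrac14\tilde{\mathbf{D}}_{N+1}+\tfrac3{16}\Psi_{N+1}\leq-\tfrac74\tilde{\mathbf{D}}_{N+1}+\tfrac3{16}\Psi_{N+1}$, and collecting the powers of $t$, produces the stated inequality.

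I expect the one genuinely delicate point to be, exactly as in Lemma \ref{lemma 4.5}, the term $L_{7}(k)$ coming from $\nabla^{k+1}(\nabla\mathbf{T}\ast Q')$: the summand in which all $k+1$ derivatives land on $\nabla\mathbf{T}$ contributes a top-order torsion factor $\tilde{b}_{k+1}$, which must be absorbed into $\tilde{\mathbf{B}}_{N+1}\leq\Psi_{N+1}$ with a small enough constant, while the $\varphi\ast\varphi\ast\varphi\ast\varphi$ part of $Q'$ and the cubic-in-$\mathbf{T}$ contributions push the polynomial weight up to $\Phi_{N}^{5}$. Checking that the accumulated $\Psi_{N+1}$-coefficient stays at $\tfrac3{16}$ and that no surviving term exceeds $\Phi_{N}^{5}(1+t\Phi_{N}+t^{2}\Phi_{N}^{2}+t^{3}\Phi_{N}^{3}+t^{4}\Phi_{N}^{4})$ is where the care is required; everything else is the same routine as in Lemmas \ref{lemma 4.3}--\ref{lemma 4.5}.
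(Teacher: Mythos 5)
Your proposal is correct and follows exactly the paper's approach: the paper's own proof of this lemma is the single sentence ``Both the evolution equations of $\nabla\varphi$ and $\nabla\psi$ are similar, so we can obtain Lemma \ref{lemma 4.6} by the same calculation,'' and your write-up is a faithful unwinding of that calculation (with $S=\nabla\psi$, $p=5$, the modified factor $Q'$, and the $L_{j}(k)$ replacing the $K_{j}(k)$ of Lemma \ref{lemma 4.5}). Nothing substantively different; you have simply supplied the details that the paper elides.
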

\begin{proof}
Both the evolution equations of $\nabla\varphi$ and $\nabla\psi$ are similar, so we can obtain Lemma \ref{lemma 4.6} by the same calculation.
\end{proof}

\begin{lemma}
    Let $(M,\psi(t))_{t\in[0,T]}$ be a solution of the modified Laplacian coflow $\eqref{modified Laplacian coflow}$. There exists a universal constant $C$ such that
    \begin{align}
        \left(\frac{\partial}{\partial t}-\triangle\right)\Phi_{N}&\leq -\Psi_{N}+C\Psi_{N}\left[(1+t\Phi_{N})^{5}-1\right]+C\Phi_{N}^{5}(1+t\Phi_{N})^{4}.
    \end{align}
\end{lemma}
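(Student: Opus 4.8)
The plan is to apply the operator $\partial_t-\triangle$ to $\Phi_N$ summand by summand, using the definition $\Phi_N=\mathbf{A}_N+\mathbf{B}_N+\mathbf{C}_N+\mathbf{D}_N+|\mathbf{T}|^2+A^2+|\varphi|^2+|\psi|^2$, and then add the resulting inequalities. For the four leading pieces I would invoke Lemmas \ref{lemma 4.3}, \ref{lemma 4.4}, \ref{lemma 4.5} and \ref{lemma 4.6} verbatim. For $|\mathbf{T}|^2$ I would use Lemma \ref{lemma 3.5}: since $\Phi_N>1$ (so that $|{\rm Rm}|\le\Phi_N^{1/2}$, $|\mathbf{T}|^2\le\Phi_N$ and $A^2\le\Phi_N$), each of the four terms $|{\rm Rm}||\mathbf{T}|^2$, $A^2|\mathbf{T}|^2$, $A|\mathbf{T}|^3$, $|\mathbf{T}|^4$ on its right-hand side is at most $C\Phi_N^2\le C\Phi_N^5$, hence $(\partial_t-\triangle)|\mathbf{T}|^2\le C\Phi_N^5$. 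The remaining summands $A^2$, $|\varphi|^2$ and $|\psi|^2$ are genuine constants (recall $|\varphi|^2_{g_\psi}$ and $|\psi|^2_{g_\psi}$ are $G_2$-invariants, hence unchanged along the flow, as noted in the preceding remark), so $(\partial_t-\triangle)$ annihilates them.

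The next step is the bookkeeping of the dissipative terms. Adding the four estimates from Lemmas \ref{lemma 4.3}--\ref{lemma 4.6}, the negative gradient contributions combine into $-\tfrac74\bigl(\tilde{\mathbf{A}}_{N+1}+\tilde{\mathbf{B}}_{N+1}+\tilde{\mathbf{C}}_{N+1}+\tilde{\mathbf{D}}_{N+1}\bigr)=-\tfrac74\Psi_{N+1}$, while the four copies of $\tfrac{3}{16}\Psi_{N+1}$ sum to $\tfrac34\Psi_{N+1}$; together these give $-\Psi_{N+1}$, and since $\Psi_{N+1}=\Psi_N+\tilde a_{N+1}^2+\tilde b_{N+1}^2+\tilde c_{N+1}^2+\tilde d_{N+1}^2\ge\Psi_N$ this is bounded by $-\Psi_N$. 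This yields the first term on the right-hand side of the asserted inequality.

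It then remains to repackage the polynomial remainders. The terms of the form $C\Psi_N\bigl(t\Phi_N+t^2\Phi_N^2+\cdots\bigr)$ appearing in Lemmas \ref{lemma 4.3}--\ref{lemma 4.6} — the highest power being $t^5\Phi_N^5$, contributed by Lemmas \ref{lemma 4.5} and \ref{lemma 4.6} — together are at most $C\Psi_N\sum_{j=1}^{5}(t\Phi_N)^j$, and the elementary binomial inequality $\sum_{j=1}^{5}x^j\le(1+x)^5-1$ for $x=t\Phi_N\ge0$ turns this into $C\Psi_N[(1+t\Phi_N)^5-1]$. Similarly, the terms $C\Phi_N^4(1+t\Phi_N+\cdots)$ and $C\Phi_N^5(1+t\Phi_N+\cdots)$ from those lemmas, together with the $C\Phi_N^5$ produced by the $|\mathbf{T}|^2$ piece, are all dominated (using $\Phi_N>1$ to raise $\Phi_N^4$ to $\Phi_N^5$) by $C\Phi_N^5\sum_{j=0}^{4}(t\Phi_N)^j\le C\Phi_N^5(1+t\Phi_N)^4$, again by $\sum_{j=0}^{4}x^j\le(1+x)^4$. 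Adding the three contributions gives exactly the claimed bound.

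There is no genuine analytic obstacle here: all the real work is already contained in Lemmas \ref{lemma 3.5} and \ref{lemma 4.3}--\ref{lemma 4.6}. The only points needing care are arithmetic: verifying that the coefficients $-\tfrac74$ and $\tfrac{3}{16}$ combine to produce precisely the coefficient $-1$ in front of $\Psi_{N+1}$ (so the dissipation term is not lost), and checking that every lower-order polynomial term can be absorbed by raising powers of $\Phi_N$ via $\Phi_N>1$ and by the two binomial majorizations above.
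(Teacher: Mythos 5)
Your proposal is correct and follows the same route as the paper: sum Lemmas \ref{lemma 4.3}--\ref{lemma 4.6} together with Lemma \ref{lemma 3.5} for the $|\mathbf{T}|^2$ term, observe that $A^2,|\varphi|^2,|\psi|^2$ are constants, use $\Phi_N>1$ to raise powers, and absorb the polynomial remainders via the binomial inequalities, with $-\tfrac74\Psi_{N+1}+\tfrac34\Psi_{N+1}=-\Psi_{N+1}\le-\Psi_N$. The paper's own proof is a one-line citation of exactly these ingredients; you have merely spelled out the arithmetic it leaves implicit.
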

\begin{proof}
    From Lemma \ref{lemma 4.3} to Lemma \ref{lemma 4.6}, combining with Lemma \ref{lemma 3.5}, using 
    $$\Phi_{N}^{i}\leq \Phi_{N}^{j},\quad \quad \text{for}\ i\leq j$$
     and
    $$-\Psi_{N+1}\leq-\Psi_{N},$$
    we get the desired result.
\end{proof}

Next, we define
$$\Omega(x,t):=|\nabla^{k}{\rm Rm}|^{2}(x,t)+|\nabla^{k+1}\mathbf{T}|^{2}(x,t)+|\nabla^{k+2}\varphi|^{2}(x,t)+|\nabla^{k+2}\psi|^{2}(x,t),$$
and note that
 $$\Lambda(x,t)=\left(|{\rm Rm}|^{2}(x,t)+|\nabla\mathbf{T}|^{2}(x,t)+|\mathbf{T}|^{4}(x,t)\right)^{\frac{1}{2}}.$$
Thus we get the following improvement of Shi-type estimate:
\begin{theorem}\label{theorem 4.8}
    Let $(M,\psi(t))_{t\in[0,T]}$ be a solution of the modified Laplacian coflow $\eqref{modified Laplacian coflow}$. There exists a positive constant $t_{0}=t_{0}(M_{0},A)$ and $C_{\ast}=C_{\ast}(T,M_{0},A)$, where $M_{0}=\sup_{M}\Lambda(x,0)$, such that
    \begin{align}
        \sum_{k=0}^{N}\frac{t^{k}}{(k+1)!^{2}}\Omega(x,t)\leq C_{\ast}
    \end{align}
    on $M\times\left[0,\min\{T,t_{0},1\}\right]$ for all $N\in \mathbb{N}$.
\end{theorem}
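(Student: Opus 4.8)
The plan is to deduce the stated estimate from a bound on the single scalar quantity $\Phi_{N}$ that is \emph{uniform in $N$}, obtained by a parabolic maximum principle combined with a short-time bootstrap, in the spirit of \cite{Bando 1987,Kotschwar 2013,L-W 2019 2}. The starting observation is the elementary identity
\[
\sum_{k=0}^{N}\frac{t^{k}}{(k+1)!^{2}}\,\Omega(x,t)=\mathbf{A}_{N}(x,t)+\mathbf{B}_{N}(x,t)+\mathbf{C}_{N}(x,t)+\mathbf{D}_{N}(x,t)\le\Phi_{N}(x,t),
\]
which holds because the remaining summands $|\mathbf{T}|^{2}+A^{2}+|\varphi|^{2}+|\psi|^{2}$ of $\Phi_{N}$ are nonnegative. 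Hence it suffices to bound $\Phi_{N}$ on $M\times\left[0,\min\{T,t_{0},1\}\right]$ by a constant independent of $N$.

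First I would control the initial data. At $t=0$ the weights $t^{k/2}$ force $a_{k}=b_{k}=c_{k}=d_{k}=0$ for every $k\ge 1$, so $\Phi_{N}(\cdot,0)$ is in fact independent of $N$; expanding $\nabla\varphi=\mathbf{T}\ast\psi$ and $\nabla^{2}\varphi=\nabla\mathbf{T}\ast\psi+\mathbf{T}\ast\mathbf{T}\ast\varphi$ (together with the analogue for $\nabla^{2}\psi$), and using $|\varphi|^{2}=42$, $|\psi|^{2}=168$ and $|\mathbf{T}|^{4}\le\Lambda^{2}\le M_{0}^{2}$, one arrives at
\[
\Phi_{N}(\cdot,0)\le\Phi_{\ast}:=C\left(M_{0}^{2}+M_{0}+A^{2}+1\right),
\]
with $C$ a universal constant depending only on the dimension; in particular $\Phi_{\ast}=\Phi_{\ast}(M_{0},A)$.

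Next I would feed this into the differential inequality for $\Phi_{N}$ established just above, namely
\[
\left(\frac{\partial}{\partial t}-\triangle\right)\Phi_{N}\le-\Psi_{N}+C\,\Psi_{N}\left[(1+t\Phi_{N})^{5}-1\right]+C\,\Phi_{N}^{5}(1+t\Phi_{N})^{4},
\]
in which the constant $C$ is independent of $N$. Since $M$ is compact and $\Phi_{N}$ is smooth, $f(t):=\max_{M}\Phi_{N}(\cdot,t)$ is locally Lipschitz, and at a spatial maximum $\triangle\Phi_{N}\le 0$; hence, in the barrier sense and with $\Psi_{N}\ge 0$ at the maximum point, $f$ satisfies the same inequality with the Laplacian deleted. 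Fix a universal $\delta>0$ so small that $C\left[(1+\delta)^{5}-1\right]\le\frac{1}{2}$. Then, as long as $tf(t)\le\delta$, the first two terms contribute at most $-\frac{1}{2}\Psi_{N}\le 0$ while $(1+tf)^{4}\le(1+\delta)^{4}$, so $f'(t)\le C(1+\delta)^{4}f(t)^{5}$. Since $\Phi_{N}\ge A^{2}>0$ we may divide to get $\frac{d}{dt}f^{-4}\ge-4C(1+\delta)^{4}$, hence $f(t)\le 2\Phi_{\ast}$ for $t\le t_{1}:=c\left(C\Phi_{\ast}^{4}\right)^{-1}$ with $c$ universal. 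Setting $t_{0}:=\min\left\{1,\,t_{1},\,\delta/(2\Phi_{\ast})\right\}$, which depends only on $M_{0}$ and $A$, a standard continuity argument — if $\tau\le t_{0}$ were the first time at which $f\le 2\Phi_{\ast}$ or $tf\le\delta$ fails, the estimate applied on $[0,\tau]$ restores both — shows that $\Phi_{N}\le 2\Phi_{\ast}$ on $M\times\left[0,\min\{T,t_{0},1\}\right]$ for every $N$. Taking $C_{\ast}:=2\Phi_{\ast}$ finishes the argument.

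The crux of the matter is that the term $C\,\Psi_{N}\left[(1+t\Phi_{N})^{5}-1\right]$ is of the same (highest) differential order as the favourable term $-\Psi_{N}$; it can be absorbed only because $(1+t\Phi_{N})^{5}-1$ stays small on a short time interval \emph{and} because the weights $t^{k}/(k+1)!^{2}$ built into $\Phi_{N}$ and $\Psi_{N}$ render $C$ uniform in $N$. Securing that uniformity is exactly what Lemmas \ref{lemma 3.5} and \ref{lemma 4.3}--\ref{lemma 4.6} accomplish, and it is there that the real work lies; within the proof of Theorem \ref{theorem 4.8} itself the only genuine subtlety is making the time $t_{0}$ and the ceiling $2\Phi_{\ast}$ mutually consistent via the bootstrap, and checking that $t_{0}$ and $C_{\ast}$ in the end depend only on $M_{0}$ and $A$ — the stated dependence of $C_{\ast}$ on $T$ being merely a conservative over-statement.
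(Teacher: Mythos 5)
Your proposal is correct and takes essentially the same route as the paper: both absorb the $\Psi_{N}$-terms by restricting to a short time interval on which $t\Phi_{N}$ stays below a uniform threshold (your $\delta$, the paper's $(1+C^{-1})^{1/5}-1$), bound $\Phi_{N}(\cdot,0)$ via the hypothesis on $\Lambda$, and then close with a maximum-principle/ODE-comparison argument on $[0,T_{N}]$ to obtain an $N$-independent bound. The only cosmetic differences are that the paper formulates the bootstrap via a stopping time $T_{N}$ and writes out the explicit ODE solution, whereas you phrase it as a continuity argument and use the $f^{-4}$ integration; your observation that $C_{\ast}$ need not actually depend on $T$ is also correct.
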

\begin{proof}
    Let $T_{N}$ be the time
    \begin{align}
        T_{N}:=\sup\left\{a\in[0,T]\left|t\Phi_{N}\leq (1+C^{-1})^{\frac{1}{5}}-1, \text{for any}\ (x,t)\in M\times[0,a]\right.\right\}.\notag
    \end{align}
    Then on $M\times[0,T_{N}]$, one gets
    \begin{align}\label{4.39}
        \left(\frac{\partial}{\partial t}-\triangle\right)\Phi_{N}&\leq C\Phi_{N}^{5}(1+C^{-1})^{\frac{4}{5}}\leq (C+1)\Phi_{N}^{5},
    \end{align}
    and note that
    \begin{align}
        \Phi_{N}(x,0)&=|{\rm Rm}|^{2}(x,0)+|\nabla\mathbf{T}|^{2}(x,0)+|\nabla^{2}\varphi|^{2}(x,0)+|\nabla^{2}\psi|^{2}(x,0)\notag\\
        &\quad+|\mathbf{T}|^{2}(x,0)+A^{2}+|\varphi|^{2}(x,0)+|\psi|^{2}(x,0)\notag\\
        &\leq |{\rm Rm}|^{2}(x,0)+|\nabla\mathbf{T}|^{2}(x,0)+2|\nabla\mathbf{T}|^{2}|\psi|^{2}(x,0)+32|\mathbf{T}|^{4}|\varphi|^{2}(x,0)\notag\\
        &\quad+32|\nabla\mathbf{T}|^{2}|\varphi|^{2}(x,0)+32|\mathbf{T}|^{4}|\psi|^{2}(x,0)+|\mathbf{T}|^{2}(x,0)+A^{2}\notag\\
        &\quad+|\varphi|^{2}(x,0)+|\psi|^{2}(x,0)\notag\\
        &\leq 5376\left(\sup_{M}\Lambda(\cdot,0)+A+1\right)^{2}\notag\\
        &=5376\left(M_{0}+A+1\right)^{2}.\notag
    \end{align}
    Since $M$ is compact, we can apply the maximum principle of Theorem 4.4 in \cite{Chow-Knopf 2004} to $\eqref{4.39}$, thus we need to solve the following ODE:
    \begin{align}
        \frac{d}{d t}\Phi_{N}=(C+1)\Phi^{5}_{N}, \quad \Phi_{N}(x,0)\leq 5376\left(M_{0}+A+1\right)^{2}.
    \end{align}
    By Calculating, we get
    \begin{align}\label{4.40}
    \Phi_{N}(x,t)\leq \frac{5376\left(M_{0}+A+1\right)^{2}}{\sqrt[4]{1-4(C+1)t\cdot\left[5376\left(M_{0}+A+1\right)^{2}\right]^{4}}}
    \end{align}
    on  $M\times[0,T_{N}]$. Thus there {exists} a time $t_{0}:=t_{0}(M_{0},A)>0$ satisfies  the definition of $T_{N}$. We denote
    \begin{align}
        T_{0}=T_{0}(T,M_{0},A):=\min\left\{T,t_{0},1\right\}>0.
    \end{align}
    Obviously, $T_{N}\geq T_{0}$ for all $N\in\mathbb{N}$. It follows that $\eqref{4.40}$ holds on $M\times[0,T_{0}]$ for all $N\in\mathbb{N}$. Therefore, we obtain that $\Phi_{N}$ can be bounded by a constant $C_{\ast}=C_{\ast}(T,M_{0},A)$ on $M\times[0,T_{0}]$ for all $N\in\mathbb{N}$. 
\end{proof}

\section{Real analyticity of modified Laplacian coflow}

In this section, we study the real analyticity of the modified Laplacian coflow $\eqref{modified Laplacian coflow}$.
Let ${\rm inj}_{g}(p)$ denote the injectivity radius of a Riemannian metric $g$ at $p\in M$. Choosing coordinates $\{x^{i}\}_{i=1}^{7}$ centered at $p$, we denote
$$\partial^{k}=\sum_{k_{1}+\cdots+k_{7}=k}\frac{\partial^{k_{1}+\cdots+k_{7}}}{(\partial x^{1})^{k_{1}}\cdots(\partial x^{7})^{k_{7}}}.$$
We obtain the following derivatives of $G_{2}$-structure and associate metric in normal coordinates.
\begin{lemma}\label{lemma 5.1}
   Let $(M,\psi,g_{\psi})$ be a $7$-dimensional Riemannian manifold with a $G_{2}$-structure satisfying the following estimate:
   \begin{align}\label{5.1}
       |\nabla^{k}{\rm Rm}|(x)+|\nabla^{k+2}\psi|(x)\leq C_{1}k!r^{-k-2}
   \end{align}
   in a geodesic ball $B(p,r)$ for all $k\in\mathbb{N}$, where $p\in M$, $r>0$ and $C_{1}$ is a positive constant.

   There exist positive constants $C_{2},C_{3},C_{4},r_{1}=r_{1}(r),r_{2}=r_{2}(r)$ such that if we set $\rho=\min\left\{\frac{C_{2}}{\sqrt{C_{1}}}r,{\rm inj}_{g}(p)\right\}$, then for all $x\in B(p,\rho)$ and $k\in\mathbb{N}$, we have in normal coordinates centered at $p$:
   \begin{align}
       \frac{1}{2}\delta_{ij}\leq g_{ij}(x)\leq2 \delta_{ij},\quad&|\partial^{k}g_{ij}|(x)\leq C_{3}k!r_{1}^{-k},\\
       |\partial^{k}\Gamma_{ij}^{l}|(x)\leq C_{3}k!r_{1}^{-k-1},\quad &|\partial^{k}\psi_{ijmn}|(x)\leq C_{4}k!r_{2}^{-k}.
   \end{align}
\end{lemma}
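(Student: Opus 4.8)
The plan is to prove the lemma in three interlocking steps: (i) the two-sided comparison for $g_{ij}$ on a ball whose radius $\rho$ is dictated by the curvature bound; (ii) factorial bounds for the coordinate derivatives $\partial^{k}g_{ij}$, hence for $\partial^{k}g^{ij}$ and $\partial^{k}\Gamma^{l}_{ij}$; and (iii) factorial bounds for $\partial^{k}\psi_{ijmn}$, deduced from (ii) together with the hypothesis on $|\nabla^{k+2}\psi|$. A preliminary remark that is used silently throughout: the bounds in \eqref{5.1} are taken with respect to $g$, whereas the conclusions are componentwise in normal coordinates; once (i) is available the Euclidean and $g$-norms of a tensor are uniformly comparable on $B(p,\rho)$, so one may pass freely between them.

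For (i): applying \eqref{5.1} with $k=0$ gives $|{\rm Rm}|\leq C_{1}r^{-2}$ on $B(p,r)$, and Rauch/Jacobi-field comparison shows that in $g$-normal coordinates centered at $p$ the matrix $(g_{ij})$ differs from the identity by a quantity controlled by $\sup|{\rm Rm}|\cdot(\mathrm{dist})^{2}$. Choosing $\rho=\min\{C_{2}C_{1}^{-1/2}r,\,{\rm inj}_{g}(p)\}$ with $C_{2}$ a small universal constant makes this error at most $\tfrac12$, which is the asserted matrix inequality; the ${\rm inj}_{g}(p)$ term only ensures normal coordinates are defined on $B(p,\rho)$. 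For (ii): in $g$-normal coordinates the metric along a radial geodesic is encoded by a frame of Jacobi fields solving $J''+R(J,\dot\gamma)\dot\gamma=0$, so $g_{ij}$, as a function of the coordinates, satisfies a radial ODE whose coefficients are built from ${\rm Rm}$ and its covariant derivatives. Differentiating this ODE and inserting $|\nabla^{k}{\rm Rm}|\leq C_{1}k!\,r^{-k-2}$, one runs the majorant-series argument of Bando and Kotschwar (\cite{Bando 1987,Kotschwar 2013}; see also \cite{L-W 2019 2}): the class of functions on $B(p,r_{1})$ dominated by $C/(1-|x|/r_{1})$ is preserved by the operations arising in the induction provided $r_{1}$ is taken slightly smaller than $C_{2}C_{1}^{-1/2}r$, and this yields $|\partial^{k}g_{ij}|\leq C_{3}k!\,r_{1}^{-k}$. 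The bound (i) lets us expand $g^{-1}$ as a Neumann series in $g-\mathrm{Id}$, giving $|\partial^{k}g^{ij}|\leq C_{3}'k!\,r_{1}^{-k}$ after a further small shrinking of $r_{1}$ to absorb the polynomial-in-$k$ losses from the Leibniz rule; then $\Gamma^{l}_{ij}=\tfrac12 g^{lm}(\partial_{i}g_{jm}+\partial_{j}g_{im}-\partial_{m}g_{ij})$ produces $|\partial^{k}\Gamma^{l}_{ij}|\leq C_{3}k!\,r_{1}^{-k-1}$, the extra $r_{1}^{-1}$ reflecting the derivative of $g$.

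For (iii): iterating the elementary identity $\partial\psi=\nabla\psi+\Gamma\ast\psi$ writes $\partial^{k}\psi$ as $\nabla^{k}\psi$ plus a sum of terms $(\partial^{a_{1}}\Gamma)\ast\cdots\ast(\partial^{a_{m}}\Gamma)\ast\nabla^{b}\psi$ with $a_{1}+\cdots+a_{m}+b<k$. The hypothesis controls $|\nabla^{j}\psi|\leq C_{1}(j-2)!\,r^{-j}$ for $j\geq2$, while $|\psi|^{2}\equiv168$ is constant and $|\nabla\psi|$ is controlled through the torsion (hence bounded in our setting, since $\nabla\psi=\mathbf{T}\ast\varphi$); combining these with the bounds on $\partial^{a}\Gamma$ from (ii) and inducting on $k$ — again phrased through majorant series, with one final shrinking to $r_{2}\leq r_{1}$ to absorb the combinatorial factors — gives $|\partial^{k}\psi_{ijmn}|\leq C_{4}k!\,r_{2}^{-k}$.

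The main obstacle is the combinatorial bookkeeping in the inductions of (ii) and (iii): each product — the Neumann series for $g^{-1}$, the Leibniz rule defining $\Gamma$, and the $\partial$-versus-$\nabla$ commutator for $\psi$ — produces convolution sums $\sum_{j}\binom{k}{j}(\cdots)$ that, handled naively, degrade $k!$ to $(k+1)!$ or worse. The remedy is to work entirely with majorant power series and to exploit that $C/(1-|x|/s)$ is, after passing to a slightly smaller radius, stable under all these operations; this is precisely what forces the radii $r_{1}=r_{1}(r)$ and $r_{2}=r_{2}(r)$ to be (possibly) smaller than $r$, and keeping track of the successive shrinkings is the delicate part. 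A secondary subtlety is that \eqref{5.1} controls $\nabla^{k+2}\psi$ rather than $\nabla^{k}\psi$, so the $k=0,1$ contributions to (iii) must be supplied from the algebraic $G_{2}$ identities ($|\psi|^{2}=168$ and $\nabla\psi=\mathbf{T}\ast\varphi$) rather than from \eqref{5.1} directly.
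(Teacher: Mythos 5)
Your proposal takes essentially the same route as the paper: the paper's proof is a one-line citation (Lemma 13.31 of Chow et al.\ for the metric and Christoffel bounds, Lemma 3.1 of Lotay--Wei for the 4-form), and you have simply unpacked the content of those citations — the Jacobi-field comparison for the $C^{0}$ metric bound, the Bando/Kotschwar majorant-series induction for $\partial^{k}g_{ij}$ and $\partial^{k}\Gamma^{l}_{ij}$, and the $\partial$-versus-$\nabla$ commutator induction for $\partial^{k}\psi_{ijmn}$. One point worth making explicit: the hypothesis \eqref{5.1} controls only $|\nabla^{j}\psi|$ for $j\geq 2$, so the $k=1$ case of the conclusion for $\psi$ requires a bound on $|\nabla\psi|$ (equivalently on $|\mathbf{T}|$, via $\nabla\psi=\mathbf{T}\ast\varphi$) that does not follow from \eqref{5.1} alone — you correctly flag this and appeal to it being available ``in our setting,'' which is true where the lemma is applied (Theorems 5.2 and 5.4 supply a torsion bound), but it is a tacit extra hypothesis in the lemma as stated, present equally in the paper's version.
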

\begin{proof}
    Since $|\nabla^{k}{\rm Rm}|(x)\leq C_{1}k!r^{-k-2}$ in $B(p,r)$ from $\eqref{5.1}$. Thus the proof of Lemma 13.31 in \cite{Ricci flow TA 2} provides the derivative estimates for $g_{ij}$ and $\Gamma_{ij}^{l}$. With the same discussion in Lemma 3.1 of \cite{L-W 2019 2}, we get the derivative estimate for $\psi_{ijmn}$.
\end{proof}
\subsection{Global real analyticity}\label{subsection5.1}
In this section, we provide the real analyticity of the modified Laplacian coflow $\eqref{modified Laplacian coflow}$ in the special case where $M$ is compact.
\begin{theorem}\label{theorem 5.2}
    Let $(M,\psi(t),g_{\psi}(t))_{t\in[0,T]}$ be a smooth solution of the modified Laplacian coflow $\eqref{modified Laplacian coflow}$ with coclosed $G_{2}$-structures on compact $7$-dimensional manifold $M$. For each time $t\in(0,T]$, $(M,\psi(t),g_{\psi}(t))$ is real analytic.
\end{theorem}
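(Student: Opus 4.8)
The plan is to fix an arbitrary time $t^{\ast}\in(0,T]$ and to show that, in normal coordinates centred at any point of $M$, the components of $g_{\psi}(t^{\ast})$ and of $\psi(t^{\ast})$ obey Cauchy-type bounds of the shape $|\partial^{k}(\cdot)|\le Ck!\,\rho^{-k}$, which is precisely the criterion for real analyticity. The engine is Theorem \ref{theorem 4.8}, but since that estimate is only asserted on an initial interval $[0,\min\{T,t_{0},1\}]$, the first step is a restarting (time-translation) argument. The modified Laplacian coflow $\eqref{modified Laplacian coflow}$ is autonomous and preserves coclosedness of $\psi$, so for each $s<t^{\ast}$ the family $\psi(s+\cdot)$ is again a solution with coclosed initial datum $\psi(s)$; moreover, by smoothness of the solution on the compact set $M\times[0,T]$ the quantity $M_{0}(s):=\sup_{M}\Lambda(\cdot,s)$ is finite and continuous in $s$, hence $t_{0}(M_{0}(s),A)$ is bounded below for $s$ near $t^{\ast}$ while $t^{\ast}-s\to0$. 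Choosing $s<t^{\ast}$ with $t^{\ast}-s\le\min\{T-s,\,t_{0}(M_{0}(s),A),\,1\}$ and applying Theorem \ref{theorem 4.8} to the shifted flow yields a uniform bound
\[
\sum_{k=0}^{N}\frac{(t^{\ast}-s)^{k}}{(k+1)!^{2}}\,\Omega(x,t^{\ast})\le C_{\ast}\qquad\text{for all }x\in M,\ N\in\mathbb{N}.
\]

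Extracting the $k$-th summand and using $(k+1)!\le 2^{k}k!$, this gives, uniformly in $x\in M$ and $k\in\mathbb{N}$,
\[
|\nabla^{k}{\rm Rm}|(x,t^{\ast})+|\nabla^{k+2}\psi|(x,t^{\ast})\le C_{1}k!\,r^{-k-2},
\]
with $C_{1}$ and $r$ depending only on $C_{\ast}$ and $t^{\ast}-s$; this is exactly hypothesis $\eqref{5.1}$ of Lemma \ref{lemma 5.1}. Applying Lemma \ref{lemma 5.1} at every $p\in M$ then produces, in the normal coordinates centred at $p$ and on a ball $B(p,\rho)$ whose radius is independent of $p$, the estimates $|\partial^{k}g_{ij}|\le C_{3}k!\,r_{1}^{-k}$, $|\partial^{k}\Gamma^{l}_{ij}|\le C_{3}k!\,r_{1}^{-k-1}$ and $|\partial^{k}\psi_{ijmn}|\le C_{4}k!\,r_{2}^{-k}$.

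Finally, a function on a ball whose iterated coordinate derivatives satisfy $|\partial^{k}f|\le Ck!\,\rho^{-k}$ is real analytic there; hence $g_{ij}$ and $\psi_{ijmn}$ are real-analytic functions of the normal coordinates about each point. Real-analyticity of the Christoffel symbols, together with the analyticity of solutions to ODEs with real-analytic coefficients applied to the geodesic equation, makes each exponential map, and hence every transition between overlapping normal-coordinate charts, real analytic; thus these charts form a real-analytic atlas on $M$, with respect to which $g_{\psi}(t^{\ast})$ and $\psi(t^{\ast})$ have real-analytic components, which is the assertion. I expect the main obstacle to be the first step: Theorem \ref{theorem 4.8} delivers only a \emph{short-time} estimate starting from a curvature bound at the initial moment, so one must run the continuation argument carefully and verify that the constants it produces are genuine pointwise Cauchy bounds for ${\rm Rm}$ and $\psi$ at the prescribed time $t^{\ast}$; once that is secured, Lemma \ref{lemma 5.1} and the standard analyticity criterion close the argument.
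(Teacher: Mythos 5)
Your proposal follows essentially the same route as the paper: use Theorem \ref{theorem 4.8} to get the factorial-growth pointwise bounds on $\nabla^{k}{\rm Rm}$ and $\nabla^{k+2}\psi$, convert them via $(k+1)!\le 2^{k}k!$ into the Cauchy-type bound of the form $C k!\,r^{-k-2}$ required by Lemma \ref{lemma 5.1}, apply that lemma to bound $\partial^{k}g_{ij}$, $\partial^{k}\Gamma^{l}_{ij}$, $\partial^{k}\psi_{ijmn}$ in normal coordinates, and invoke the standard analyticity criterion for metrics and tensors in normal coordinates (which the paper cites from Chow et al.\ and you sketch directly via the exponential map). Your time-translation restart is exactly the paper's ``finite iteration,'' made explicit by shifting the initial time to an $s<t^{\ast}$ close enough that $t^{\ast}-s$ lies in the allowed interval of Theorem \ref{theorem 4.8}, so the argument is correct and substantively identical.
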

\begin{proof}
    From Theorem \ref{theorem 4.8}, one gets
    \begin{align}
        t^{\frac{k}{2}}\left(|\nabla^{k}{\rm Rm}|(x,t)+|\nabla^{k+2}\psi|(x,t)\right)\leq C_{\ast}(k+1)!
    \end{align}
    on $M\times[0,T_{0}]$. Note that $k+1\leq 2^{k}$, for any fixed $t\in (0,T_{0}]$, we denote $r=\sqrt{t}/2$ and $C=C_{\ast}T_{0}/4$ independent on $k$ yields 
    $$|\nabla^{k}{\rm Rm}|(x,t)+|\nabla^{k+2}\psi|(x,t)\leq Ck!r^{-k-2}.$$
    Therefore, Lemma \ref{lemma 5.1} gives the derivative estimates for $g_{ij}$, $\Gamma_{ij}^{l}$ and $\psi_{ijmn}$ in normal coordinates. With the same discussion in Section 2.1(\cite{Ricci flow TA 2}, p.212) and Lemma 13.20 in \cite{Ricci flow TA 2}. It follows that $(M,\psi(t),g_{\psi}(t))$ is real analytic on $(0, T_{0}]$. Since $T_{0}$ is a positive constant, then by finite iteration, we can cover the entire time interval $t\in (0, T]$.
\end{proof}

\subsection{Local real analyticity}\label{subsection5.2}
In this section, we study the local revision real analyticity of the modified Laplacian coflow $\eqref{modified Laplacian coflow}$ via a cut-off function. Firstly, we give the construction of the cut-off function $\eta$.
\begin{lemma}\label{lemma 5.3}
    Let $\psi(t)_{t\in[0,T]}$ be a smooth solution of the modified Laplacian coflow $\eqref{modified Laplacian coflow}$ on an open set $U\subset M$. Let $p\in U$ and $r>0$ such that $\overline{B}_{g_{0}}(p,2r)\subset U$ is compact, where $g_{0}$ is the metric induced by initial coclosed $G_{2}$-structure $\psi$. Suppose that $A^{2}\leq \mathbf{M}$ and
    \begin{align}\label{5.5}
        \left(|{\rm Rm}|^{2}(x,t)+|\nabla\mathbf{T}|^{2}(x,t)+|\mathbf{T}|^{4}(x,t)\right)^{\frac{1}{2}}=\Lambda(x,t)\leq \mathbf{M}
    \end{align}
    for all $(x,t)\in B_{g_{0}}(p,2r)\times[0,T_{\ast}]$ with $0<T_{\ast}\leq\min \{T,1\}$.

    There exists a cut-off function $\eta:U\rightarrow [0,1]$ that is compactly supported in $B_{g_{0}}(p,r)$, satisfies $\eta=1$ in $B_{g_{0}}(p,r/2)$, and its derivatives satisfy
    \begin{align}\label{5.6}
        |\nabla\eta(x)|_{g(t)}^{2}-\eta(x)\triangle_{g(t)}\eta(x)\leq C_{0}\eta(x)
    \end{align}
    on $U\times[0,T_{\ast}]$ for some constant $C_{0}=C_{0}(r,T_{\ast},\mathbf{M})$.
\end{lemma}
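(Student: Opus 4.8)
The plan is to build $\eta$ by composing a standard real-valued bump profile with the $g_0$-distance function from $p$, and then to control the two terms on the left of $\eqref{5.6}$ using (i) the equivalence of the metrics $g(t)$ and $g_0$ on $B_{g_0}(p,2r)\times[0,T_\ast]$, and (ii) a Laplacian comparison estimate that is uniform in $t$. Concretely, I would fix a smooth nonincreasing $\chi:[0,\infty)\to[0,1]$ with $\chi\equiv 1$ on $[0,1/2]$, $\chi\equiv 0$ on $[1,\infty)$, and with $(\chi')^2/\chi$ and $|\chi''|/\chi$ (more precisely $(\chi')^2\le c\,\chi$ and $|\chi''|\le c$) bounded --- the usual trick is to take $\chi$ of the form $\chi=\zeta^2$ for a smooth $\zeta$, so that $|\nabla\eta|^2 = 4\zeta^2|\nabla\zeta|^2 (d_0')^2$ is automatically $\le C\eta$. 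Then set $\eta(x):=\chi\!\left(d_{g_0}(p,x)/r\right)$. This is compactly supported in $B_{g_0}(p,r)$ and identically $1$ on $B_{g_0}(p,r/2)$ by construction, so only the differential inequality $\eqref{5.6}$ requires work.

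For the gradient term, $\nabla\eta = \tfrac1r\chi'(d_0/r)\,\nabla d_0$, and since $|\nabla d_0|_{g_0}=1$ a.e. while $g(t)$ is uniformly equivalent to $g_0$ on the relevant set, $|\nabla\eta|_{g(t)}^2 \le C r^{-2}(\chi')^2(d_0/r) \le C r^{-2}\chi(d_0/r) = Cr^{-2}\eta$; this already gives the first term the desired form. For the Hessian/Laplacian term I would write, in the barrier sense, $\triangle_{g(t)}\eta = \tfrac1r\chi'(d_0/r)\triangle_{g(t)}d_0 + \tfrac1{r^2}\chi''(d_0/r)|\nabla d_0|_{g(t)}^2$; the $\chi''$ piece is bounded by $Cr^{-2}$ and, after multiplying by $\eta\le 1$, is absorbed into $C_0\eta$. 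The term $\tfrac1r\chi'(d_0/r)\,\eta\,\triangle_{g(t)}d_0$ is handled by a Laplacian comparison: since $\chi'\le 0$ and $\triangle_{g(t)}d_0$ is controlled from above on $B_{g_0}(p,2r)$ by the curvature bound and the equivalence of metrics (for $t=0$ this is the classical comparison; for $t>0$ one uses that $g(t)$ is equivalent to $g_0$ and has bounded curvature via $\eqref{5.5}$ together with the Shi-type estimate of Section~\ref{section4}), the product is $\le C_0\eta$. Collecting the three contributions yields $|\nabla\eta|_{g(t)}^2 - \eta\triangle_{g(t)}\eta \le C_0\eta$ with $C_0$ depending only on $r,T_\ast,\mathbf{M}$.

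The main technical obstacle is making the metric-equivalence and distance-comparison estimates \emph{uniform in $t\in[0,T_\ast]$}: one needs that $g(t)$ and $g_0$ stay uniformly comparable on $B_{g_0}(p,2r)$, which follows by integrating the evolution $\eqref{Ricci-like coflow}$ using the bound on $|{\rm Rm}|$, $|\mathbf{T}|$, $A$ supplied by $\eqref{5.5}$ and $A^2\le\mathbf{M}$ (this is where the hypothesis $T_\ast\le 1$ enters, keeping $e^{C T_\ast}$ bounded), and that the $g_0$-geodesic ball $B_{g_0}(p,2r)$, where everything is controlled, still contains the $g(t)$-object $d_{g_0}$. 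A secondary subtlety is the lack of smoothness of $d_{g_0}$ inside the cut locus; this is dealt with in the standard way, either by working with upper barriers at each point or by replacing $d_{g_0}$ with a smoothing that satisfies the same estimates up to a harmless constant. Neither issue is serious, and the rest is the routine computation sketched above.
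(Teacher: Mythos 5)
Your overall strategy — set $\eta=\chi(d_{g_{0}}(p,\cdot)/r)$ with a profile satisfying $(\chi')^{2}\le c\chi$, control $|\nabla\eta|^{2}_{g(t)}$ by $C^{0}$-equivalence of $g(t)$ and $g_{0}$, and handle $\eta\triangle_{g(t)}\eta$ via a comparison argument — is indeed the same as the paper's, which verifies exactly these two hypotheses (metric equivalence from $|h|\le C\mathbf{M}$; a bound on $|\nabla h|$ from Chen's Shi-type estimate) and then cites Parts~1 and~2 of Lemma~14.3 in the Chow et al.\ book. Two points in your write-up, though, should be tightened.

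First, your justification for the bound on $\triangle_{g(t)}d_{g_{0}}$ is not quite right as stated. You appeal to ``the curvature bound and the equivalence of metrics,'' but $C^{0}$-equivalence $C^{-1}g_{0}\le g(t)\le Cg_{0}$ together with bounded $|{\rm Rm}(g(t))|$ does \emph{not} control the mixed expression $\triangle_{g(t)}d_{g_{0}}$: the Laplacian involves the Christoffel symbols of $g(t)$, so one needs a $C^{1}$-comparison, i.e.\ a bound on $\nabla_{g_{0}}g(t)$ (equivalently on $\Gamma(g(t))-\Gamma(g_{0})$). That bound is what the first-derivative Shi-type estimate supplies: it gives $|\nabla h|\le C\,t^{-1/2}$ on $B_{g_{0}}(p,r)\times(0,T_{\ast}]$, and integrating in $t$ — the factor $t^{-1/2}$ is integrable — yields a uniform bound on $\Gamma(g(t))-\Gamma(g_{0})$, after which one compares $\triangle_{g(t)}d_{g_{0}}$ with $\triangle_{g_{0}}d_{g_{0}}$ and applies the classical Hessian/Laplacian comparison \emph{for $g_{0}$ only}. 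This is precisely the argument packaged in Part~2 of Lemma~14.3 of Chow et al., and the paper's proof makes explicit that the relevant quantity is $\Theta=t\mathbf{M}^{2}|\nabla h|^{2}$, which it bounds from $t^{1/2}\big(|\nabla{\rm Rm}|+|\nabla^{2}\mathbf{T}|\big)\le C$.

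Second, invoking ``the Shi-type estimate of Section~4'' is problematic here. The global improved estimate (Theorem~4.8) is stated on a compact $M$, while Lemma~5.3 lives on a possibly noncompact open set $U$; and the \emph{local} improved estimate (Theorem~5.4) itself \emph{uses} Lemma~5.3, so citing it would be circular. The paper instead uses the (non-improved) Shi-type estimate of Chen \cite{Chen Shi-estimates}, Theorem~2.1 — an external result valid on geodesic balls — which is exactly what is needed, since only the first derivative $\nabla h$ is required here and no control on the $k$-dependence of constants is necessary. Replacing your reference accordingly, and spelling out the role of $|\nabla h|$ as above, the rest of your construction goes through and recovers the lemma.
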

\begin{proof}
    Recall that the modified Laplacian coflow $\eqref{modified Laplacian coflow}$ can be written as the Ricci-like flow $\eqref{Ricci-like coflow}$ with velocity $2h(t)$, where $h(t)$ is denoted by $\eqref{geometric flow}$. From $\eqref{5.5}$, it follows that
    \begin{align}
        |h(t)|\leq C\left(|{\rm Rm}|+|\mathbf{T}|^{2}+A^{2}\right)\leq C\mathbf{M},
    \end{align}
    where $C$ is a constant {depends} only on the dimension $7$ and $|\varphi|^{2}=42$. The above implies that
    $$e^{-C(T_{\ast},\mathbf{M})}g_{0}\leq g(t)\leq e^{C(T_{\ast},\mathbf{M})}g_{0}.$$
    Following the proof of Part 1 of Lemma 14.3 in \cite{Ricci flow TA 2}, we get 
    $$|\nabla\eta(x)|_{g(t)}^{2}\leq C_{0}\eta(x)$$
    for all $(x,t)\in B_{g_{0}}(p,r)\times[0,T_{\ast}]$.
    
    Under assumption $\eqref{5.5}$, the Shi-type estimate of Theorem 2.1 in \cite{Chen Shi-estimates} provides that
    \begin{align}
        t^{\frac{1}{2}}\left(|\nabla{\rm Rm}|+|\nabla^{2}\mathbf{T}|\right)\leq C(r,T_{\ast},\mathbf{M})
    \end{align}
     for all $(x,t)\in B_{g_{0}}(p,r)\times[0,T_{\ast}]$. Let $\Theta(x,t)=t\mathbf{M}^{2}|\nabla h(x,t)|^{2}$, then from the above yields
     \begin{align}
         \Theta(x,t)&\leq Ct\mathbf{M}^{2}\left(|\nabla{\rm Rm}|^{2}+|\mathbf{T}|^{2}|\nabla\mathbf{T}|^{2}+|\mathbf{T}|^{6}+A^{2}|\nabla\mathbf{T}|^{2}\right)\\
         &\leq Ct\mathbf{M}^{2}\left(|\nabla{\rm Rm}|^{2}+|\nabla\mathbf{T}|^{3}+|\mathbf{T}|^{6}+A^{6}\right)\notag\\
         &\leq C(r,T_{\ast},\mathbf{M})\notag
     \end{align}
     for all $(x,t)\in B_{g_{0}}(p,r)\times[0,T_{\ast}]$, where the second inequality uses Cauchy-Schwarz inequality. Thus we can construct the cut-off function satisfies $\eqref{5.6}$ via the proof of Part 2 of Lemma 14.3 in \cite{Ricci flow TA 2}.
\end{proof}

\begin{theorem}\label{theorem 5.4}
     Let $(U,\psi(t))_{t\in[0,T]}$ be a smooth solution of the modified Laplacian coflow $\eqref{modified Laplacian coflow}$ on an open set $U\subset M$. Let $p\in U$ and $r>0$ such that $\overline{B}_{g_{0}}(p,2r)\subset U$ is compact, where $g_{0}$ is the metric induced by initial coclosed $G_{2}$-structure $\psi$. Suppose that
     $$\mathbf{M}=\sup_{B_{g_{0}}(p,2r)\times[0,T]}\Lambda(x,t).$$
     Then there exist positive constants $L,C,T_{\ast}$ depending only on $r,\mathbf{M},T$ such that
     \begin{align}\label{5.10}
         t^{\frac{k}{2}}\left(|\nabla^{k}{\rm Rm}|(x,t)+|\nabla^{k+1}\mathbf{T}|(x,t)+|\nabla^{k+2}\psi|(x,t)\right)\leq CL^{\frac{k}{2}}(k+1)!
     \end{align}
     for all $k\in\mathbb{N}$ and $(x,t)\in B_{g_{0}}(p,r/2)\times[0,T_{\ast}]$.
\end{theorem}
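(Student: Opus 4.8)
The plan is to localize the global estimate of Theorem \ref{theorem 4.8} to the geodesic ball $B_{g_{0}}(p,2r)$ by inserting powers of the cut-off function $\eta$ from Lemma \ref{lemma 5.3} into the quantities $\mathbf{A}_N,\mathbf{B}_N,\mathbf{C}_N,\mathbf{D}_N$ and running the same maximum-principle argument. First I would fix $T_{\ast}\le\min\{T,1\}$ small enough that Lemma \ref{lemma 5.3} applies, and use the curvature/torsion bound $\Lambda\le\mathbf{M}$ together with the Shi-type estimate of \cite{Chen Shi-estimates} to control the zeroth-order data: on $B_{g_{0}}(p,2r)\times[0,T_{\ast}]$ one has uniform bounds on $|{\rm Rm}|,|\nabla\mathbf{T}|,|\mathbf{T}|$ and, after multiplying by $t^{1/2}$, on $|\nabla{\rm Rm}|,|\nabla^{2}\mathbf{T}|$, hence on $|\nabla^{2}\varphi|,|\nabla^{2}\psi|$ via $\nabla\varphi=\mathbf{T}\ast\psi$, $\nabla\psi=\mathbf{T}\ast\varphi$. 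These give a bound on $\Phi_{0}$ and start the induction.

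Next I would define the localized quantity $F_{N}=\eta\,\Phi_{N}$ (or, following \cite{L-W 2019 2}, a suitable combination such as $\eta\Phi_{N}+$ lower-order correction terms that keep the reaction term absorbable) and compute its evolution. Using the composite evolution inequality
\[
\left(\frac{\partial}{\partial t}-\triangle\right)\Phi_{N}\le -\Psi_{N}+C\Psi_{N}\left[(1+t\Phi_{N})^{5}-1\right]+C\Phi_{N}^{5}(1+t\Phi_{N})^{4},
\]
together with $(\partial_t-\triangle)\eta$ controlled by Lemma \ref{lemma 5.3}'s inequality $|\nabla\eta|^{2}-\eta\triangle\eta\le C_{0}\eta$ and the flow-induced term $|\partial_t\eta|\le C|h||\nabla\eta|/\cdots$ which is bounded since $\eta$ is $t$-independent and $|h|\le C\mathbf{M}$, one gets
\[
\left(\frac{\partial}{\partial t}-\triangle\right)(\eta\Phi_{N})\le -\eta\Psi_{N}+2\langle\nabla\eta,\nabla\Phi_{N}\rangle+C_{0}\Phi_{N}+C\eta\Psi_{N}\left[(1+t\Phi_{N})^{5}-1\right]+C\eta\Phi_{N}^{5}(1+t\Phi_{N})^{4}.
\]
The cross term $2\langle\nabla\eta,\nabla\Phi_{N}\rangle$ is the usual nuisance; it is handled by the Kato-type inequality $|\nabla\Phi_{N}|\le C(\Phi_{N}\Psi_{N})^{1/2}$ (which follows from the definitions of $\mathbf{A}_N,\dots$ and Cauchy–Schwarz, exactly as the $P(x,y,z,w)$ estimates were used) and the property $|\nabla\eta|^{2}\le C_{0}\eta$, so that $2\langle\nabla\eta,\nabla\Phi_{N}\rangle\le \tfrac12\eta\Psi_{N}+C\Phi_{N}$; this lets the good term $-\eta\Psi_{N}$ absorb it. Restricting attention to the time interval where $t\Phi_{N}$ is small (so that $(1+t\Phi_{N})^{5}-1\le\tfrac{1}{4C}$), the remaining reaction term is $\le C\eta\Phi_{N}^{5}+C_{0}\Phi_{N}\le C(\eta\Phi_{N})^{5}/\eta^{4}+\cdots$ — and here one uses that the correction terms in $F_{N}$ (or a cutoff power $\eta^{m}$ with $m$ large) make the right-hand side a polynomial in $F_{N}$ alone, yielding $(\partial_t-\triangle)F_{N}\le C(F_{N}+1)^{5}$ on $B_{g_{0}}(p,2r)\times[0,T_{\ast}]$.

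Then I would apply the maximum principle on the compact closure $\overline{B}_{g_{0}}(p,2r)$: since $\eta$ is compactly supported in $B_{g_{0}}(p,r)$, $F_{N}$ vanishes near $\partial B_{g_{0}}(p,2r)$, so the spatial maximum is attained in the interior and the ODE comparison $\frac{d}{dt}f=C(f+1)^{5}$ with $f(0)\le C(\mathbf{M},r)$ (uniform in $N$ by the zeroth-order step) gives a bound $F_{N}\le C_{\ast}(r,\mathbf{M},T)$ on $B_{g_{0}}(p,2r)\times[0,T_{\ast}]$ for all $N$, with $T_{\ast}=T_{\ast}(r,\mathbf{M},T)$ chosen so the ODE does not blow up and so $t\Phi_{N}$ stays small (a continuity/bootstrap argument in the time variable, as in the proof of Theorem \ref{theorem 4.8}, closes this). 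On $B_{g_{0}}(p,r/2)$ we have $\eta\equiv1$, hence $\Phi_{N}\le C_{\ast}$ there, which unwinds to $t^{k}|\nabla^{k}{\rm Rm}|^{2}/(k+1)!^{2}+\cdots\le C_{\ast}$; taking square roots and using $(k+1)!\le 2^{k}(k+1)!$ (absorbing $2^{k}$ into $L^{k/2}$) gives \eqref{5.10} with $L$ depending only on $r,\mathbf{M},T$. The main obstacle I anticipate is \emph{bookkeeping the cut-off interactions}: unlike the Ricci-flow case, $\Phi_{N}$ here bundles four families ($\mathbf{A},\mathbf{B},\mathbf{C},\mathbf{D}$) coupled through $\varphi,\psi$ and the constant $A$, so one must check that the Kato inequality $|\nabla\Phi_{N}|\lesssim(\Phi_{N}\Psi_{N})^{1/2}$ genuinely holds with the extra $|\mathbf{T}|^{2}+A^{2}+|\varphi|^{2}+|\psi|^{2}$ summands present, and that the $\partial_t\eta$ and flow-metric-distortion terms (which involve $h$, hence $\mathbf{T}$ and $A$) are all controlled purely by $\mathbf{M}$ and $r$; this is where one must be careful that no hidden $N$-dependence creeps in through the derivative-of-metric factors $(g^{-1})^{k+4}$.
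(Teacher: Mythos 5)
The high-level strategy you describe (localize via a cutoff, run the maximum principle, ODE-compare) is the right one, and your preliminary steps (using Lemma~\ref{lemma 5.3}, the Shi-type bound from \cite{Chen Shi-estimates} for the zeroth-order data, and the composite evolution inequality for $\Phi_N$) match the paper. However, the concrete localization $F_N=\eta\Phi_N$ (or $\eta^m\Phi_N$ with $m$ fixed) does not close, and the paper's proof of Theorem~\ref{theorem 5.4} is structured differently precisely to avoid this.

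The gap is in the reaction-term bookkeeping. After multiplying the evolution inequality for $\Phi_N$ by $\eta$, the nonlinear term becomes $C\,\eta\,\Phi_N^5(1+t\Phi_N)^4$. Rewriting $\eta\Phi_N^5 = F_N^5/\eta^4$ introduces a factor $\eta^{-4}$ that blows up near the boundary of $\operatorname{supp}\eta$, so there is no differential inequality of the form $(\partial_t-\triangle)F_N\leq C(F_N+1)^5$ valid uniformly on $B_{g_0}(p,r)$. Replacing $\eta$ by $\eta^m$ only changes $4$ to $4m$. Your parenthetical suggestion to add ``lower-order correction terms'' or to take ``$m$ large'' does not resolve this: the difficulty is that a single spatial weight cannot simultaneously serve all derivative orders $k$ appearing in the quintic products $a_{i}a_{j}a_{k}\cdots$ with $i+j+\cdots=k$. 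The paper's fix is to attach a $k$-dependent weight to each summand, setting $\alpha_k=\eta^{(k+1)/2}L^{-k/2}a_k$ and likewise for $\beta_k,\gamma_k,\delta_k,\tilde\alpha_k,\ldots$, and then working with $\tilde\Phi_N=\sum_k(\alpha_k^2+\beta_k^2+\gamma_k^2+\delta_k^2)+|\mathbf{T}|^2+A^2+|\varphi|^2+|\psi|^2$. With this choice (i) the products appearing in the Lemma~\ref{lemma 4.3}--\ref{lemma 4.6} estimates retain nonnegative total $\eta$-power after multiplication by $\eta^{k+1}/L^k$, so the right-hand side really is polynomial in $\tilde\Phi_N$; (ii) the cutoff cross terms $\langle\nabla\eta^{k+1},\nabla a_k^2\rangle$ and the $(\partial_t-\triangle)\eta^{k+1}$ contributions produce factors $C_0 t/L$, which are absorbed by choosing $L$ large (so $L$ is not merely cosmetic as in your last-line ``absorb $2^k$'' step --- it is tuned mid-proof to make $4C/L\leq 1/8$). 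You would need to adopt this $k$-scaled weighting to make the argument go through; as written, the quantity $F_N=\eta\Phi_N$ does not satisfy the comparison ODE you invoke.
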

\begin{proof}
    The proof is similar to the compact case, as defined in \cite{L-W 2019 2}, for $L>0$, we denote by
    \begin{align}
        \alpha_{k}&=\frac{\eta^{\frac{k+1}{2}}}{L^{\frac{k}{2}}}a_{k},\ \quad \beta_{k}=\frac{\eta^{\frac{k+1}{2}}}{L^{\frac{k}{2}}}b_{k},\quad\gamma_{k}=\frac{\eta^{\frac{k+1}{2}}}{L^{\frac{k}{2}}}c_{k},\quad \ \delta_{k}=\frac{\eta^{\frac{k+1}{2}}}{L^{\frac{k}{2}}}d_{k},\quad \text{for}\ k\geq 0,\notag\\
        \tilde{\alpha}_{k}&=\frac{\eta^{\frac{k}{2}}}{L^{\frac{k-1}{2}}}\tilde{a}_{k},\quad \tilde{\beta}_{k}=\frac{\eta^{\frac{k}{2}}}{L^{\frac{k-1}{2}}}\tilde{b}_{k},\quad \tilde{\gamma}_{k}=\frac{\eta^{\frac{k}{2}}}{L^{\frac{k-1}{2}}}\tilde{c}_{k},\quad \tilde{\delta}_{k}=\frac{\eta^{\frac{k}{2}}}{L^{\frac{k-1}{2}}}\tilde{d}_{k},\quad \text{for}\ k\geq 1,\notag
    \end{align}
    where $a_{k},b_{k},c_{k},d_{k},\tilde{a}_{k},\tilde{b}_{k},\tilde{c}_{k},\tilde{d}_{k}$ are defined in Section \ref{section4} and $\eta$ is the cut-off function from Lemma \ref{lemma 5.3}. We also define
    \begin{align}
        \tilde{\Phi}_{N}&=\sum_{k=0}^{N}\left(\alpha_{k}^{2}+\beta_{k}^{2}+\gamma_{k}^{2}+\delta_{k}^{2}\right)+|\mathbf{T}|^{2}+A^{2}+|\varphi|^{2}+|\psi|^{2},\\
        \tilde{\Psi}_{N}&=\sum_{k=0}^{N}\left(\tilde{\alpha}_{k}^{2}+\tilde{\beta}_{k}^{2}+\tilde{\gamma}_{k}^{2}+\tilde{\delta}_{k}^{2}\right).
    \end{align}
    The main proof is to estimate the evolution inequality of $\tilde{\Phi}_{N}$, thus we should estimate the evolution equations of $\alpha_{k}^{2},\beta_{k}^{2},\gamma_{k}^{2},\delta_{k}^{2},|\mathbf{T}|^{2}$. Firstly, we consider the case of $\alpha_{k}^{2}$
    \begin{align}\label{5.13}
        \left(\frac{\partial}{\partial t}-\triangle\right)\alpha_{k}^{2}&=\left(\frac{\partial}{\partial t}-\triangle\right)\left(\frac{\eta^{k+1}}{L^{k}}a_{k}^{2}\right)\\
        &= \frac{\eta^{k+1}}{L^{k}}\left(\frac{\partial}{\partial t}-\triangle\right)a_{k}^{2}+\frac{a_{k}^{2}}{L^{k}}\left(\frac{\partial}{\partial t}-\triangle\right)\eta^{k+1}\notag\\
        &\quad-\frac{2}{L^{k}}\langle\nabla\eta^{k+1},\nabla a_{k}^{2}\rangle.\notag
    \end{align}
    For the second term, from Lemma \ref{lemma 5.3}, one gets
    \begin{align}
        \frac{a_{k}^{2}}{L^{k}}\left(\frac{\partial}{\partial t}-\triangle\right)\eta^{k+1}\leq (k+1)C_{0}\frac{a_{k}^{2}}{L^{k}}\eta^{k}=\frac{C_{0}t}{L(k+1)}\tilde{\alpha}_{k}^{2}.
    \end{align}
    For the third term, by the Cauchy-Schwarz inequality, we have
    \begin{align}
        -\frac{2}{L^{k}}\langle\nabla\eta^{k+1},\nabla a_{k}^{2}\rangle&\leq 4\frac{\eta^{k}|\nabla\eta|}{L^{k}}\frac{t^{k}|\nabla^{k}{\rm Rm}||\nabla^{k+1}{\rm Rm}|}{k!(k+1)!}=4\frac{t^{\frac{1}{2}}\eta^{k}|\nabla\eta|}{L^{k}}\tilde{a}_{k}\tilde{a}_{k+1}\\
        &\leq \frac{1}{8}\tilde{\alpha}_{k+1}^{2}+\frac{128C_{0} t}{L}\tilde{\alpha}_{k}^{2}\notag.
    \end{align}
    To estimate the first term, with the same discussion in Lemma \ref{lemma 4.3} and note that $\eta\leq 1$, it follows that
    \begin{align}
        \sum_{k=0}^{N}\frac{\eta^{k+1}}{L^{k}}\left(\frac{\partial}{\partial t}-\triangle\right)a_{k}^{2}&\leq -2\sum_{k=0}^{N}\tilde{\alpha}_{k+1}^{2}+\frac{1}{4L}\sum_{k=0}^{N}\tilde{\alpha}_{k}^{2}+\frac{3}{16}\tilde{\Psi}_{N}\\
        &\quad+C\tilde{\Phi}_{N}^{4}\left(1+\frac{t}{L}\tilde{\Phi}_{N}\right)^{3}+C\tilde{\Psi}_{N}\left(\left(1+\frac{t}{L}\tilde{\Phi}_{N}\right)^{4}-1\right)\notag.
    \end{align}
    Submitting the above into $\eqref{5.13}$, we get
    \begin{align}\label{5.17}
        \left(\frac{\partial}{\partial t}-\triangle\right)\sum_{k=0}^{N}\alpha_{k}^{2}&\leq -\frac{15}{8}\sum_{k=0}^{N}\tilde{\alpha}_{k+1}^{2}+\left(\frac{C}{L}+\frac{3}{16}\right)\tilde{\Psi}_{N}\\
        &\quad+C\tilde{\Phi}_{N}^{4}\left(1+\frac{t}{L}\tilde{\Phi}_{N}\right)^{3}+C\tilde{\Psi}_{N}\left(\left(1+\frac{t}{L}\tilde{\Phi}_{N}\right)^{4}-1\right)\notag
    \end{align}
    on $U\times [0,T]$, where $C=C(r,T,\mathbf{M})$. By the same proof, we can also obtain the evolution inequality of $\displaystyle\sum_{k=0}^{N}\beta_{k}^{2}$,$\displaystyle\sum_{k=0}^{N}\gamma_{k}^{2}$ and $\displaystyle\sum_{k=0}^{N}\delta_{k}^{2}$, which can be derived as
    \begin{align}
        &\quad\left(\frac{\partial}{\partial t}-\triangle\right)\sum_{k=0}^{N}\left(\beta_{k}^{2}+\gamma_{k}^{2}+\delta_{k}^{2}\right)\\
        &\leq -\frac{15}{8}\sum_{k=0}^{N}\left(\tilde{\beta}_{k+1}^{2}+\tilde{\gamma}_{k+1}^{2}+\tilde{\delta}_{k+1}^{2}\right)+3\left(\frac{C}{L}+\frac{3}{16}\right)\tilde{\Psi}_{N}\notag\\
        &\quad+C\tilde{\Phi}_{N}^{5}\left(1+\frac{t}{L}\tilde{\Phi}_{N}\right)^{4}+C\tilde{\Psi}_{N}\left(\left(1+\frac{t}{L}\tilde{\Phi}_{N}\right)^{5}-1\right)\notag.
    \end{align}
    Combining Lemma \ref{lemma 3.5} and $\eqref{5.17}$. Choosing $L$ large enough such that $\frac{4C}{L}\leq\frac{1}{8}$, thus we get
    \begin{align}
        \left(\frac{\partial}{\partial t}-\triangle\right)\tilde{\Phi}_{N}\leq -\tilde{\Psi}_{N}+C\tilde{\Psi}_{N}\left[\left(1+\frac{t}{L}\tilde{\Phi}_{N}\right)^{5}-1\right]+C\tilde{\Phi}_{N}^{5}\left(1+\frac{t}{L}\tilde{\Phi}_{N}\right)^{4}
    \end{align}
    on $U\times [0,T]$. 
    
    Let $\widetilde{T}_{N}$ be the time
    \begin{align}
        \widetilde{T}_{N}:=\sup\left\{a\in[0,T]\left|\frac{t}{L}\tilde{\Phi}_{N}\leq (1+C^{-1})^{\frac{1}{5}}-1, \text{for any}\ (x,t)\in U\times[0,a]\right.\right\}.\notag
    \end{align}
   Following the proof of Theorem \ref{theorem 4.8}, there {exists} a time $t_{1}:=t_{1}(r,T,\mathbf{M})$ satisfies the definition of $\widetilde{T}_{N}$, and we denote
    \begin{align}
        T_{\ast}=T_{\ast}(r,T,M_{0}):=\min\left\{T,t_{1},1\right\}>0,
    \end{align}
    Thus we obtain $\eqref{5.10}$ on $B_{g_{0}}(p,r/2)\times[0,T_{\ast}]$ for all $k\in\mathbb{N}$.
\end{proof}

\begin{theorem}
    Let $(U,\psi(t),g_{\psi}(t))_{t\in[0,T]}$ be a smooth solution of the modified Laplacian coflow $\eqref{modified Laplacian coflow}$ with coclosed $G_{2}$-structure on an open set $U\subset M$, then for each time $t\in(0,T]$, $(U,\psi(t),g_{\psi}(t))$ is real analytic.
\end{theorem}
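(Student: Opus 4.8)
The plan is to reduce the statement to the localized derivative estimate of Theorem \ref{theorem 5.4} together with the coordinate conversion of Lemma \ref{lemma 5.1}, following the scheme already used in the compact case (Theorem \ref{theorem 5.2}). Fix $q\in U$ and $t_{0}\in(0,T]$; it suffices to prove that $g_{\psi}(t_{0})$ and $\psi(t_{0})$ are real analytic in a neighbourhood of $q$. Choose $r>0$ with $\overline{B}_{g_{0}}(q,2r)\subset U$ compact and set $\mathbf{M}=\sup_{B_{g_{0}}(q,2r)\times[0,T]}\Lambda$, which is finite by smoothness of the solution and compactness (enlarging $\mathbf{M}$, if necessary, so that $A^{2}\leq\mathbf{M}$). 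Theorem \ref{theorem 5.4} then produces constants $L,C,T_{\ast}$ depending only on $r,\mathbf{M},T$ such that
\begin{align}
t^{\frac{k}{2}}\left(|\nabla^{k}{\rm Rm}|(x,t)+|\nabla^{k+2}\psi|(x,t)\right)\leq CL^{\frac{k}{2}}(k+1)!\notag
\end{align}
on $B_{g_{0}}(q,r/2)\times[0,T_{\ast}]$.

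Next I would treat the range $t\in(0,T_{\ast}]$. Fixing such a $t$ and using $(k+1)!\leq 2^{k+1}k!$, one rewrites the above, with $\bar r:=\frac{1}{2}\sqrt{t/L}$ and a $k$-independent constant $C_{1}=C_{1}(C,L,t)$, in the form
\begin{align}
|\nabla^{k}{\rm Rm}|(x,t)+|\nabla^{k+2}\psi|(x,t)\leq C_{1}\,k!\,\bar r^{-k-2}\notag
\end{align}
on $B_{g_{0}}(q,r/2)$; since $g(t)$ and $g_{0}$ are uniformly equivalent on $B_{g_{0}}(q,2r)$ (via the bound $|h(t)|\leq C\mathbf{M}$ from the proof of Lemma \ref{lemma 5.3}), this yields hypothesis \eqref{5.1} of Lemma \ref{lemma 5.1} on a fixed-size $g(t)$-geodesic ball around $q$. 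Applying Lemma \ref{lemma 5.1} gives, in normal coordinates centered at $q$ on $B(q,\rho)$ with $\rho=\min\{C_{2}C_{1}^{-1/2}\bar r,\,{\rm inj}_{g(t)}(q)\}$, the bounds $\frac{1}{2}\delta_{ij}\leq g_{ij}\leq 2\delta_{ij}$, $|\partial^{k}g_{ij}|\leq C_{3}k!r_{1}^{-k}$, $|\partial^{k}\Gamma^{l}_{ij}|\leq C_{3}k!r_{1}^{-k-1}$ and $|\partial^{k}\psi_{ijmn}|\leq C_{4}k!r_{2}^{-k}$. Geometric growth of all Euclidean derivatives forces $g_{ij}(\cdot,t)$ and $\psi_{ijmn}(\cdot,t)$ to be real analytic near $q$, by the standard radius-of-convergence criterion argued exactly as in Section 2.1 and Lemma 13.20 of \cite{Ricci flow TA 2}. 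Since $q$ and $t$ were arbitrary, $(U,\psi(t),g_{\psi}(t))$ is real analytic for every $t\in(0,T_{\ast}]$.

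To reach $t_{0}>T_{\ast}$ I would iterate in time. The modified Laplacian coflow \eqref{modified Laplacian coflow} is autonomous and preserves the coclosed condition, so $\psi(\,\cdot+T_{\ast}/2)$ is again a smooth coclosed solution, now on $[0,T-T_{\ast}/2]$, with initial metric $g(T_{\ast}/2)$; because $g(T_{\ast}/2)$ is uniformly equivalent to $g_{0}$ on $B_{g_{0}}(q,2r)$, one may repeat the argument around $q$ on a slightly shrunken fixed-size ball and obtain real analyticity on a further time-interval of length $\geq T_{\ast}'$, where $T_{\ast}'>0$ again depends only on $r,\mathbf{M},T$. Since each restart advances time by a definite amount bounded below independently of the step, finitely many iterations cover all of $(0,T]$, and in particular the chosen $t_{0}$.

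I expect the essential analytic work to be already done: the improved Shi-type estimate (Theorem \ref{theorem 4.8}) and its localized version (Theorem \ref{theorem 5.4}) are in place, and the remaining analytic input — Lemma \ref{lemma 5.1} and the radius-of-convergence criterion — is standard. The main obstacle is therefore the time-iteration bookkeeping: one must check that after each restart the relevant geodesic balls still sit inside $U$, that the metric-equivalence constants do not degenerate, and that $T_{\ast}$ stays bounded below, so that $(0,T]$ is genuinely exhausted in finitely many steps. A secondary technical point is the $k$-independent passage from the $(k+1)!$-bound of Theorem \ref{theorem 5.4} to the $k!$-bound demanded by Lemma \ref{lemma 5.1}, together with the matching of radii $\bar r\sim\sqrt{t}$.
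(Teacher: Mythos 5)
Your proposal is correct and follows essentially the same route as the paper: the paper's own proof is a one-line appeal to Theorem~\ref{theorem 5.4} together with the argument of Theorem~\ref{theorem 5.2} (fix $t$, rewrite the $(k+1)!$-bound in the form required by Lemma~\ref{lemma 5.1} via $(k+1)!\le 2^{k+1}k!$ and a radius $\bar r\sim\sqrt{t}$, apply the standard radius-of-convergence criterion from \cite{Ricci flow TA 2}, then iterate in time). Your write-up simply makes explicit the bookkeeping the paper leaves implicit --- the $L^{k/2}$ absorption into $\bar r$, the uniform equivalence of $g(t)$ and $g_{0}$ needed to pass between $g_{0}$- and $g(t)$-geodesic balls, and the verification that the restart advances time by a definite amount --- so it is a fuller account of the same proof rather than a different one.
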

\begin{proof}
    With the same discussion in Theorem \ref{theorem 5.2}, the local real analyticity follows from Theorem \ref{theorem 5.4}.
\end{proof}

$${}$$

\textbf{Acknowledgments.}\ \ 
The first author would like to thank his advisor Professor Jian Ge for his guidance. He also would like to thank Professor Kotaro Kawai for reviewing the original manuscript, and thank Nanhao Gao for pointing out some errors. 
The second author is funded by Shanghai Institute for Mathematics and Interdisciplinary Sciences (SIMIS) under grant number SIMIS-ID-2025-AD.  The authors would also like to thank the referee for the valuable comments and suggestions.

\bibliographystyle{amsplain}

\end{document}